\newtheorem{thm}{Theorem}
\newtheorem{lem}{Lemma}
\newtheorem{pb}{Problem}
\newtheorem{defi}{Definition}
\newcommand\Q{\mathbb{Q}}
\newcommand\R{\mathbb{R}}
\newcommand\Z{\mathbb{Z}}
\newcommand\N{\mathbb{N}}
\newcommand\Sph{\mathbb{S}}
\newcommand\T{\mathbb{T}}
\begin{document}

\large

\title[Danzer et Dense Forests]{Around the Danzer Problem and the Construction of Dense Forests.}
\author{Faustin Adiceam}
\address{F.A. : Department of Mathematics, The University of Manchester, UK} \email{ faustin.adiceam@manchester.ac.uk}
\thanks{FA's research was supported by EPSRC Grant EP/T021225/1. He would like to thank Timothy Gowers, Yaar Solomon, Ioannis Tsokanos and Barak Weiss for detailed comments on prelimary drafts of this survey.}

\begin{abstract}
A 1965 problem due to Danzer asks whether  there exists a set with finite density in Euclidean space intersecting any convex body of volume one. A suitable weakening of the volume constraint leads one to the (much more recent) problem of constructing \emph{dense forests}. These are discrete point sets becoming uniformly close to long enough line segments. 

Progress towards these problems have so far involved a wide range of ideas surrounding areas as varied as combinatorial and computation geometry,  convex geometry, Diophantine approximation, discrepancy theory, the theory of dynamical systems, the theory of exponential sums, Fourier analysis, homogeneous dynamics, the mathematical  theory of quasicrystals and probability theory.

The goal of this paper is to survey the known results related to the Danzer Problem and to the construction of dense forests, to generalise some of them and to state a number of open problems to make further progress  towards this  longstanding question.

\end{abstract}

\maketitle


\begin{center}
\emph{A Philippe Gavila.}
\end{center}

\setcounter{tocdepth}{2}
\tableofcontents

\section{The Danzer Problem}

\subsection{Statement}
 
Let $n\ge 2$ be an integer standing for a dimension. Throughout, $B_2\left(\bm{x}, T\right)$ denotes the Euclidean ball centered at $\bm{x}\in\R^n$ with radius $T\ge 0$. The \emph{density} of a subset $\mathfrak{D}\subset\R^n$ is defined as the quantity 
\begin{equation*}
d_n\left(\mathfrak{D}\right)\;=\; \limsup_{T\rightarrow\infty} \frac{\#\left(\mathfrak{D}\cap B_2\left(\bm{0}, T\right)\right)}{T^n}\cdotp
\end{equation*}
 
The following open problem, due to Ludwig Danzer (1927--2011), and initially concerned with the planar case $n=2$ only, seems to have been posed first on the occasion  of a colloquium on the topic of convexity held at the university of Copenhagen in 1965 --- see~\cite{Da65}.

\begin{pb}[Danzer, 1965]\label{danzer}
Does there exists a set with finite density intersecting any convex body of volume 1 in dimension $n\ge 2$?
\end{pb}

The property of intersecting each convex body with volume 1 will from now on be referred to as the \emph{Danzer property}. Clearly, if $\mathfrak{D}$ satisfies this property, then the dilated set $c\mathfrak{D}=\left\{c\cdot\bm{d}\; :\; \bm{d}\in\mathfrak{D}\right\}$, where $c>0$, intersects any covex body with volume $c^n$. No particular importance must therefore be attached to the requirement that the volumes considered in Problem~\ref{danzer}  must be unity~: they can take any fixed, strictly positive value. This observation shows that the Danzer Problem can be restated as follows, as is done in~\cite[p.148]{CFG} for instance~: \emph{given a subset of $\R^n$ with finite density, does there  exist convex sets of arbitrarily large volumes not intersecting it?} 

It is not clear whether the Danzer problem can be answered in the affirmative. In~\cite{branetal} for instance, it is conjectured that there exists no set satisfying the assumptions of Problem~\ref{danzer}.\\

Problem 9 in~\cite[p.285]{BC}  is concerned with a question dual to Danzer's~:

\begin{pb}[The Dual Danzer Problem]\label{dualdanzer}
Let $M\ge 2$ be an integer and let $\mathfrak{D}\subset\R^n$. Assume that  any convex body with unit volume contains at most $M$ points of $\mathfrak{D}$. Does this imply that $\mathfrak{D}$ has zero density?
\end{pb}

Most of this survey will be concerned with Danzer's Problem. Its dual version, about which very little seems to be known, will be mentioned only briefly.

\subsection{Some  other Statements Equivalent to Danzer's Problem}\label{equistatesec}

As implicitly observed by Bambah and Woods~\cite[\S 3]{BW} and then explicitly stated by Solomon and Weiss~\cite[Proposition 5.4]{SW}, there is no loss of generality in restricting the statement of Problem~\ref{danzer} to the set of \emph{boxes} with unit volume (a box is a parallelepiped with orthogonal adjacent faces). 

This indeed follows from a theorem due to F.~John asserting that any convex set $\mathcal{K}\subset\R^n$ contains a unique ellipsoid $\mathcal{E}$ of maximal volume, which furthermore satisfies the property that $\mathcal{K}\subset \sqrt{n}\cdot\mathcal{E}$ --- see~\cite[pp.13--16]{ball}. Consequently, there exists a constant $\alpha_n>0$ such that any convex body $\mathcal{K}\subset\R^n$ contains a box $\mathcal{R}_1$ and is contained in a box $\mathcal{R}_2$ with the property that $\textrm{vol}(\mathcal{R}_2)/\textrm{vol}(\mathcal{R}_1)\le \alpha_n$ --- see  Figure~\ref{figjohnthm}. If one assumes that $\mathcal{K}$ has unit volume, this implies that the box $\mathcal{R}_1$ has volume at least $s=\alpha_n^{-1}$. Danzer's Problem can therefore be reduced to proving the existence of a point in the smaller set $\mathcal{R}_1$. Without loss of generality in view of the remark on the volume constraint made after the statement of Problem~\ref{danzer}, the box $\mathcal{R}_1$ can furthermore be assumed to have unit volume, which justifies the above claim.

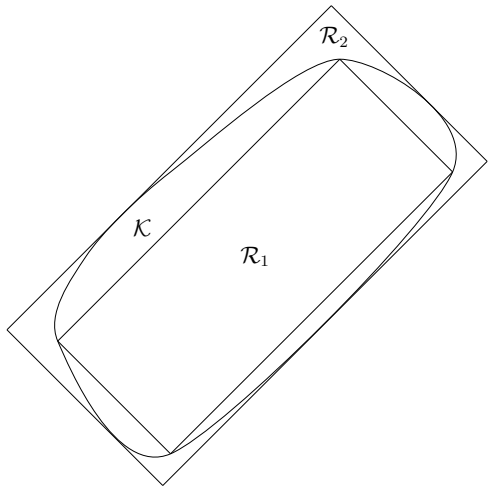
\begin{figure}[h!]
\begin{center}
\scalebox{.75}{
  \begin{tikzpicture}    
    \draw (0,0)--(5,5);
    \draw (-2,2)--(3,7);
    \draw (0,0)--(-2,2);
    \draw (5,5)--(3,7);
    
    \node (p1) at (0, 0) {};
    \node (p2) at (5, 5) {};
    \node (p3) at (3, 7) {};
    \node (p4) at (0,5) {};
    \node (p5) at (-1.5,3.5) {};
    \node (p6) at (-2, 2) {};
    \draw plot [smooth cycle,tension=0.5, ultra thick] coordinates {(p1) (p2) (p3) (p4)(p5)(p6)};
        
    \draw (-2.9,2.2)--(2.85,7.95) ;
    \draw (2.85,7.95)-- (5.6125,5.1875);
    \draw (5.6125,5.1875)--(-0.1375,-0.5625);
    \draw (-0.1375,-0.5625)--(-2.9,2.2);
    
    \node  at (1.5, 3.5) {$\mathcal{R}_1$};
    \node  at (-0.5, 4) {$\mathcal{K}$};
    \node at (2.9, 7.4) {$\mathcal{R}_2$};        
  \end{tikzpicture}
  }
  \end{center}
  \caption{Illustration of John's Theorem in the plane~: the convex body $\mathcal{K}$ contains an inner rectangle $\mathcal{R}_1$ and is contained in an outer rectangle $\mathcal{R}_2$ such that the ratio $\textrm{area}(\mathcal{R}_2)/\textrm{area}(\mathcal{R}_1)$ is bounded above by an absolute constant.} 
  \label{figjohnthm}
\end{figure}

Any unit volume box can be obtained as the image of the standard box $\left[0,1\right]^n$ by an element of  the orientation and volume preserving affine group 
\begin{equation*}\label{defgn}
G_n\; =\; SL_n\left(\R\right)\ltimes \R^n.
\end{equation*} 
As noticed in~\cite[Proposition 3.1]{SW}, this observation provides a natural reformulation of Problem~\ref{danzer} in terms of a property satisfied by a group action~:

\begin{pb}[Statement Equivalent to the Danzer Problem]\label{equivdanzer}
Does there exist a set $\mathfrak{D}\subset\R^n$ with finite density such that for any $g\in G_n$, the condition $$g\left([0,1]^n\right)\cap \mathfrak{D}\neq \emptyset$$ holds?
\end{pb}

A topological reformulation of Danzer's Problem also appears in~\cite[Lemma 2.3]{SSW}. It is based on the Chabauty--Fell topology on the space $\mathcal{X}_n$ of all closed subsets in $\R^n$. Recall that this topology is induced by the metric $d$ defined as follows~: given $F_1, F_2\in \mathcal{X}_n$, set 
\begin{equation}\label{fell}
d\left(F_1, F_2\right)\;=\; \min\left\{1,  \Delta\left(F_1, F_2\right)  \right\},
\end{equation} 
where $\Delta\left(F_1, F_2\right)$ is the infimum over all $\epsilon>0$ such that $$F_1\cap B_2\left(\bm{0}, \epsilon^{-1}\right)\subset \bigcup_{\bm{x}\in F_2}B_2\left(\bm{x}, \epsilon \right) \qquad \textrm{and} \qquad F_2\cap B_2\left(\bm{0}, \epsilon^{-1}\right)\subset \bigcup_{\bm{x}\in F_1}B_2\left(\bm{x}, \epsilon \right).$$

It is known that the space $\left(\mathcal{X}_n, d\right)$ is a complete metric space. Furthermore, the affine action of $G_n$ on $\R^n$ induces a natural $G_n$--action on $\mathcal{X}_n$. A short verification based on the reduction made before the statement of Problem~\ref{equivdanzer}  then yields to the following (see~\cite[p.6588]{SSW} for details)~:

\begin{pb}[Statement Equivalent to the Danzer Problem]\label{equivdanzerbis}
Does there exist a set $\mathfrak{D}\subset\R^n$ with finite density such that the empty set does not belong to the closure, in the Chabauty--Fell topology, of the set $G_n\cdot\mathfrak{D}$ (i.e.~such that $\emptyset\not\in\overline{G_n\cdot\mathfrak{D}}$)?
\end{pb}

Problem~\ref{equivdanzerbis} thus provides a natural link between the Danzer Problem  and the pro\-per\-ties of the dynamical system $\left(\mathcal{X}_n, G_n\right)$.

\subsection{The State of Art}\label{statartdanzer} Attempts to tackle Danzer's question  have  focused in three different directions~: firstly, by relaxing the density constraint. The known results in this direction are reviewed below. Secondly, by relaxing in a relevant way the volume constraint. This approach is very recent and leads one to the topic of dense forests which \S~\ref{secforests} is concerned with. Thirdly, and this is the subject of \S\ref{seccandidate} below, by showing that classes of discrete points sets enjoying some additional structure do not satisfy the Danzer property despite having a finite density. \\

When relaxing the density constraint, Bambah and Woods in their seminal 1971 paper~\cite{BW} showed that a union of grids (that is, a union of translated lattices) can be used to define constructively a  set $\mathfrak{D}\subset\R^n$ satsifying the Danzer property whose density fails to be finite up to a power of a logarithm. More precisely, they obtain the density bund $\#\left(\mathfrak{D}\cap B_2\left(\bm{0}, T\right)\right) = O\left(T^n(\log T)^{n-1}\right)$ for $n\ge 2$. 

The work by Bambah and Woods has long remained the only significative contribution to Danzer's Problem until the 2016  paper by Solomon and Weiss~\cite{SW}. It is proved therein that one can find a set satisfying the required intersection property with a better growth rate for its density~: 

\begin{thm}[Solomon \& Weiss, 2016]\label{solweibest} There exists a set $\mathfrak{D}\subset\R^n$ satisfying the Danzer property such that $\#\left(\mathfrak{D}\cap B_2\left(\bm{0}, T\right)\right) = O\left(T^n \log T\right)$.
\end{thm}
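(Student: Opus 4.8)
The plan is to improve on the Bambah--Woods construction by using a single, carefully chosen union of lattices whose geometry is adapted to the full range of shapes that unit-volume boxes can take. By the reductions recalled in \S\ref{equistatesec}, it suffices to produce a set $\mathfrak{D}$ of density $O\bigl(T^n\log T\bigr)$ that meets every box $g([0,1]^n)$ with $g\in G_n$. After rotating, such a box is an axis-parallel rectangular parallelepiped with side lengths $t_1,\dots,t_n>0$ satisfying $t_1\cdots t_n=1$; the essential difficulty is the anisotropy, i.e.\ the ratios $t_i/t_j$ may be arbitrarily large, so no fixed lattice will do. The standard idea is to cover the ``shape space'' $\{(t_1,\dots,t_n): \prod t_i=1\}$ by a countable family of dilated lattices $\Lambda_k = D_k\Z^n$, where $D_k$ is a diagonal matrix with entries forming a geometric-type progression, together with all their translates needed to make the union a genuine Danzer set; one then sums the contributions of the $\Lambda_k$ that intersect a large ball $B_2(\bm 0,T)$.

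First I would set up the covering of shape space. Fix a base $b>1$ and, for each index vector $\bm{a}=(a_1,\dots,a_n)\in\Z^n$ with $a_1+\dots+a_n=0$, let $D_{\bm a}=\mathrm{diag}(b^{a_1},\dots,b^{a_n})$, so that $D_{\bm a}\in SL_n(\R)$. The key elementary fact is that any rotated unit box, after translation, contains a fundamental-domain-sized translate of $D_{\bm a}[0,1]^n$ for a suitable $\bm a$ with $\|\bm a\|_\infty \lesssim \log(\max_i t_i)$; hence intersecting all boxes whose longest side is at most $e^{R}$ requires only the lattices $D_{\bm a}$ with $\|\bm a\|_\infty\lesssim R$, of which there are $O(R^{n-1})$. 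Next I would, following Bambah--Woods, replace each single lattice $D_{\bm a}\Z^n$ by a finite union of translates $\bigcup_{\bm v} (D_{\bm a}\Z^n+\bm v)$ chosen so that every translate of the corresponding fundamental box contains a point of the union --- a Minkowski-type covering argument shows $O(1)$ translates suffice per $\bm a$. The set $\mathfrak{D}$ is then the union over all $\bm a$ of these finitely many grids, and by construction it has the Danzer property.

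The heart of the proof is the density estimate. One must count $\#\bigl(\mathfrak{D}\cap B_2(\bm 0,T)\bigr)$; the grid attached to $\bm a$ contributes roughly $\prod_{i=1}^n \min\bigl(T b^{-a_i},\, (T b^{-a_i})\bigr)$-type terms --- more precisely $\asymp \prod_i \max(1, T/b^{|a_i|})$ when $\|\bm a\|_\infty \lesssim \log T$ (larger $\bm a$ contribute $O(1)$ from the translates alone, summing to a negligible amount if the family is truncated appropriately, or one argues these grids barely poke into the ball). The sum $\sum_{\bm a:\,\sum a_i=0} \prod_{i=1}^n \max(1, T b^{-|a_i|})$ is the decisive computation: the dominant contribution comes from $\bm a$ with all $|a_i|\le \log_b T$, and one shows it is $O\bigl(T^n \log T\bigr)$ --- the single logarithm, rather than the $(\log T)^{n-1}$ of Bambah--Woods, appearing because the constraint $\sum a_i=0$ collapses one of the $n-1$ free logarithmic directions once the geometric weights are taken into account (the extremal $\bm a$ are those supported essentially on two coordinates). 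I expect this summation --- choosing the lattices $D_{\bm a}$ and the parameter $b$ so that the weighted count telescopes to exactly one logarithm, while simultaneously keeping the covering of shape space airtight --- to be the main obstacle; the Danzer property itself is the comparatively routine part, handled by John's theorem and a packing/covering argument exactly as in \S\ref{equistatesec}.
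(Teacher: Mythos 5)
There are two independent gaps here, and either one sinks the argument.

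The first concerns rotations. You write that ``after rotating, such a box is an axis-parallel rectangular parallelepiped'' and then cover the shape space $\{(t_1,\dots,t_n):\prod_i t_i=1\}$ by diagonal matrices $D_{\bm a}$. But $\mathfrak{D}$ is fixed once and for all, so you cannot rotate a box into axis-parallel position without also rotating $\mathfrak{D}$: the family $\{D_{\bm a}\Z^n+\bm v\}$ of axis-parallel anisotropic grids only addresses the diagonal subgroup $\Delta_n(\R)$ of $SL_n(\R)$, not the rotational degrees of freedom in $G_n=KAN\ltimes\R^n$. John's theorem places inside a unit-volume convex body a box of \emph{arbitrary orientation}, not an aligned one (a long thin box tilted at $45^{\circ}$ contains no aligned box of comparable volume), so what your construction actually targets is the \emph{aligned}-box problem --- precisely the setting of Theorem~\ref{simsol}, where Simmons and Solomon achieve finite density. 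The rotational degrees of freedom are the entire difficulty of Danzer's problem: a thin tilted box can be positioned, by a Diophantine/genericity argument, to miss every grid in the finite subfamily relevant to its eccentricity, and indeed by~\cite[Theorem 1]{BW} no finite union of grids is a Danzer set. The step you call ``comparatively routine'' is therefore the whole problem, and your covering of shape space does not touch it.

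The second gap is in the count, and it matters even for the aligned part. Each $D_{\bm a}$ is unimodular, so whenever all $a_i<\log_b T$ your product $\prod_i\max(1,Tb^{-a_i})$ equals $T^n b^{-\sum_i a_i}=T^n$: every admissible $\bm a$ with $\|\bm a\|_\infty\lesssim\log_b T$ contributes a full $\asymp T^n$ points to $B_2(\bm 0,T)$, and there are $\asymp(\log T)^{n-1}$ such $\bm a$ on the hyperplane $\sum_i a_i=0$. Distinct unimodular grids overlap only in much sparser sublattices, so the union has $\asymp T^n(\log T)^{n-1}$ points; no ``collapse'' to a single logarithm occurs, and you recover exactly the Bambah--Woods bound (the two coincide only when $n=2$). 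This is consistent with how the theorem is actually proved: Solomon and Weiss do not use a lattice construction at all, but combine the $\epsilon$-net equivalence of Theorem~\ref{equivdanzerog} (with $g(x)=x^n\log x$) with the Haussler--Welzl \emph{probabilistic} construction of $\epsilon$-nets of cardinality $O(\epsilon^{-1}\log(\epsilon^{-1}))$ for range spaces of bounded VC dimension. Whether any deterministic construction can match this bound is posed in the text as an open problem, so a deterministic union of grids achieving $O(T^n\log T)$ would be a surprise requiring a genuinely new idea, not a refinement of the summation.
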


This stands as the best known result when relaxing the density constraint in Danzer's Problem. The construction leading to the above statement  is of a probabilistic and combinatorial nature and is related to the so--called Danzer--Rogers Problem presented in \S~\ref{danzerrogers}. The following curious consequence is derived from this combinatorial approach (see~\cite[Corollary 1.5]{SW} for details)~:

\begin{thm}[Solomon \& Weiss, 2016]
If a Danzer set exists in $\R^n$, then there exists a Danzer set contained in $\Q^n$.
\end{thm}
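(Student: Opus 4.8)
The plan is to obtain the rational set as a perturbation of a hypothetical Danzer set $\mathfrak{D}\subset\R^n$, moving each of its points to a nearby rational point by an amount that shrinks as one moves away from the origin. I first record some reductions resting on the scaling remark after Problem~\ref{danzer} and on John's theorem as used in~\S\ref{equistatesec}. A set of finite density is locally finite, hence closed and discrete, so $\mathfrak{D}=\{\bm{d}_1,\bm{d}_2,\dots\}$ is countable; and since $\mathfrak{D}$ meets every convex body of volume $1$ it meets every box of volume $\ge 1$. Replacing $\mathfrak{D}$ by $\gamma_0^{1/n}\mathfrak{D}$ for a small dimensional constant $\gamma_0>0$ to be fixed later --- which preserves finite density --- we may thus assume that $\mathfrak{D}$ meets every box of volume $\ge\gamma_0$, and then, by shrinking each box concentrically by the factor $1/2$, that $\mathfrak{D}$ meets the \emph{interior} of every box of volume $\ge 2^n\gamma_0=:\gamma$.

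Fix $\psi(\rho)=c_n(\rho+1)^{-(n-1)}$, where $c_n>0$ is a dimensional constant to be chosen, pick for each $k$ a point $\bm{d}_k'\in\Q^n$ with $\lvert\bm{d}_k'-\bm{d}_k\rvert<\psi(\lvert\bm{d}_k\rvert)$, and put $\mathfrak{E}=\{\bm{d}_k'\}\subset\Q^n$. Since $\psi$ is bounded and tends to $0$, the set $\mathfrak{E}$ is again locally finite and satisfies $d_n(\mathfrak{E})\le d_n(\mathfrak{D})<\infty$. By John's theorem, $\mathfrak{E}$ has the Danzer property as soon as it meets every box of volume at least a suitable dimensional constant $V_0\gamma$; one then takes $\gamma_0:=(2^nV_0\alpha_n)^{-1}$, so that $V_0\gamma\le\alpha_n^{-1}$ and the argument closes.

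The heart of the matter --- and the step I expect to be the main obstacle --- is the following geometric lemma: every box $Q$ with $\mathrm{vol}(Q)\ge V_0\gamma$ contains a point $\bm{d}\in\mathfrak{D}$ with $\mathrm{dist}(\bm{d},\partial Q)>\psi(\lvert\bm{d}\rvert)$, so that the corresponding $\bm{d}'$ lies in $Q$. To prove it, write the side lengths of $Q$ as $\ell_1\le\dots\le\ell_n$ along an orthonormal frame $\bm{u}_1,\dots,\bm{u}_n$, let $\bm{v}$ be a vertex of $Q$ at maximal distance $R$ from the origin, set $\mu=(\gamma/\mathrm{vol}(Q))^{1/n}\le V_0^{-1/n}$, and place inside $Q$, near $\bm{v}$, the box $S=\bm{v}+\prod_i\left[-\tfrac32\mu\ell_i,-\tfrac12\mu\ell_i\right]\bm{u}_i$, which has volume exactly $\gamma$; then $\mathfrak{D}$ meets $\mathrm{int}(S)\subseteq\mathrm{int}(Q)$, say at $\bm{d}$. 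Choosing $V_0$ large forces every point of $S$ to lie within $\tfrac32\mu\,\mathrm{diam}(Q)\le R/2$ of $\bm{v}$, so that $\lvert\bm{d}\rvert\ge R/2$, while one checks directly that $\mathrm{dist}(\bm{d},\partial Q)\ge\tfrac12\mu\ell_1$. The decisive geometric input is now that a box of large volume with a \emph{short} shortest side is necessarily \emph{long}: here $\ell_1\ge V_0\gamma/\ell_n^{\,n-1}$, and since $\ell_n\le\mathrm{diam}(Q)\le 2R\le 4\lvert\bm{d}\rvert$ a short computation converts the last two inequalities into $\mathrm{dist}(\bm{d},\partial Q)\ge\tfrac12\,V_0^{(n-1)/n}\,\gamma\,4^{-(n-1)}\,\lvert\bm{d}\rvert^{-(n-1)}$, which beats $\psi(\lvert\bm{d}\rvert)$ once $c_n$ is small enough.

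The delicate points to pin down in this lemma are that the sub-box placed at the far vertex of $Q$ does not wrap back towards the origin (this is exactly what dictates how large the absolute constant $V_0$ must be taken), that a single profile $\psi$ works simultaneously for arbitrarily fat and arbitrarily thin boxes, and that all of the estimates above are uniform in the position and the orientation of $Q$. An alternative route that avoids this explicit geometry is to run the combinatorial Danzer--Rogers construction underlying Theorem~\ref{solweibest} with rational data throughout, using the hypothetical set $\mathfrak{D}$ only to certify which cells of the construction must contain a point.
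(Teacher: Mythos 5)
Your proof is correct, but it takes a genuinely different route from the one the paper relies on. The paper obtains the statement as a corollary of the combinatorial equivalence of Theorem~\ref{equivdanzerog}: a Danzer set yields finite $\epsilon$--nets for boxes in $[0,1]^n$, a finite $\epsilon$--net may be taken rational (being an $\epsilon$--net is a robust condition), and the reconstruction of a Danzer set from such nets over dyadic scales uses only rational affine placements, so the output lies in $\Q^n$ --- this is exactly the ``alternative route'' you mention at the end. Your argument is instead a direct perturbation, and its engine is the sound geometric fact that a box $Q$ of volume $V\ge V_0\gamma$ whose farthest vertex is at distance $R$ from the origin satisfies $\ell_1\ge V/\ell_n^{n-1}$ with $\ell_n\le \mathrm{diam}(Q)\le 2R$, so that the point of $\mathfrak{D}$ captured by the corner sub--box $S$ obeys
\begin{equation*}
\mathrm{dist}\left(\bm{d},\partial Q\right)\;\ge\;\tfrac12\mu\ell_1\;\ge\;\tfrac12\gamma^{1/n}V^{(n-1)/n}\ell_n^{-(n-1)}\;\ge\;\tfrac12 V_0^{(n-1)/n}\gamma\, 4^{-(n-1)}\left|\bm{d}\right|^{-(n-1)},
\end{equation*}
which dominates $\psi(\left|\bm{d}\right|)$ once $c_n$ is small; the constants close in the order $V_0$ (e.g.\ $6^n$, which forces $\mu\le 1/6$ and hence $\left|\bm{d}\right|\ge R/2$), then $\gamma_0$, then $c_n$, with no circularity, and the worst case is indeed the thin box passing near the origin, where $\left|\bm{d}\right|$ and $\ell_n$ are comparable and the two sides of the inequality are of the same order. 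What your approach buys is a self--contained elementary proof together with a stability statement --- the Danzer property survives any perturbation decaying like $\left|\bm{x}\right|^{-(n-1)}$, the same exponent as the visibility lower bound in Problem~\ref{mainpbdenseforest} --- and exact preservation of the density; what the $\epsilon$--net route buys is that rationality comes for free once Theorem~\ref{equivdanzerog} is available, in the quantitative form that the density growth function $g$ is preserved as well.
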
 

As for positive results when keeping both the density and the volume constraints, one most notable contribution is presented in~\cite{SS} and relies on the equivalent formulation of the Danzer Problem stated in Problem~\ref{equivdanzer}~:

\begin{thm}[Simmons \& Solomon, 2016]\label{simsol}
There exists a set $\mathfrak{D}_n\subset\R^n$ with finite density intersecting any aligned box with volume 1.
\end{thm}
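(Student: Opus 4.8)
The plan is to use the reformulation supplied by Problem~\ref{equivdanzer}, but only for the relevant subgroup: an \emph{aligned} unit box is exactly the image of $[0,1]^n$ under an element of $D_n\ltimes\R^n\le G_n$, where $D_n<SL_n(\R)$ is the group of positive diagonal matrices of determinant one. So it suffices to produce a finite-density set $\mathfrak D_n\subset\R^n$ meeting $\bm v+\prod_{i=1}^n[0,t_i]$ for every $\bm v\in\R^n$ and every $(t_1,\dots,t_n)\in(0,\infty)^n$ with $t_1\cdots t_n=1$. A preliminary reduction disposes of the boxes of bounded aspect ratio: for a fixed large integer $Q$, the lattice $Q^{-1}\Z^n$ has finite density and meets every aligned unit box all of whose sides are at least $Q^{-1}$; any remaining box has a side exceeding $Q^{1/(n-1)}$. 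It therefore remains to hit boxes with one, or several, very long side(s) --- and to do so \emph{without a logarithmic loss in the density}.

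The underlying mechanism is clearest when $n=2$, for a ``wide'' box $\bm v+[0,t]\times[0,1/t]$ with $t$ large. Let $\alpha$ be a badly approximable number --- the golden ratio, say --- so that, by the three-distance theorem, any window of $N$ consecutive multiples of $\alpha$ reduced modulo one is $C_\alpha N^{-1}$-dense in $\R/\Z$ for an absolute constant $C_\alpha$. Fixing an integer $D>4C_\alpha$, set
\[
P\;=\;\bigcup_{r=0}^{D-1}\left\{\left(k,\ \tfrac1D\{k\alpha\}+\tfrac rD\right)\ :\ k\in\Z\right\}\ \subset\ \R\times[0,1),
\qquad
\mathfrak D_2\;=\;\bigcup_{q\in\Z}\big(P+(0,q)\big)\ \cup\ \bigcup_{p\in\Z}\big(\widetilde P+(p,0)\big)\ \cup\ Q^{-1}\Z^2,
\]
where $\widetilde P$ is $P$ with its two coordinates exchanged (it absorbs the ``tall'' boxes). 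Over a horizontal window of length $t$ the set $P$ contains at least $t/2$ consecutive integers $k$, whose second coordinates are $2C_\alpha(Dt)^{-1}$-dense in $[0,1)$, hence denser than $(2t)^{-1}$; so the box is met even when its short side straddles two of the unit strips. The decisive feature is that the \emph{single} set $P$, thanks to the three-distance theorem, achieves this for \emph{all} aspect ratios simultaneously --- and it is precisely this that is lost in the union-of-grids constructions behind the Bambah--Woods bound and Theorem~\ref{solweibest}. As for the density, in $B_2(\bm 0,T)$ each strip $P+(0,q)$ with $|q|\le T$ contributes $\asymp D\sqrt{T^2-q^2}$ points, and $\sum_{|q|\le T}\sqrt{T^2-q^2}\asymp T^2$ is a Riemann sum for the area of a half-disc, so that $\#\big(\mathfrak D_2\cap B_2(\bm 0,T)\big)=O(T^2)$.

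In arbitrary dimension the scheme is the same, with the Kronecker graph replaced by digital low-discrepancy point sets. Given a box, let $S\subset\{1,\dots,n\}$ be the set of indices $i$ with $t_i>Q^{1/(n-1)}$; this is non-empty by the reduction above. For each admissible value of $S$ I would use a point set of the form $\{(\bm k,\bm y(\bm k))\ :\ \bm k\in\Z_{\ge 0}^S\}\subset[0,\infty)^S\times[0,1)^{n-|S|}$, obtained from a digital $(0,n-|S|)$-sequence in a prime-power base $q\ge n$ (promoted to $|S|$ index-directions by a digit-spreading bijection $\Z_{\ge 0}^S\to\Z_{\ge 0}$), together with its reflections and its $\Z^{n-|S|}$-translates in the short coordinates. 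The defining \emph{net property} ensures that over any index box $\prod_{i\in S}[j_iq^{m_i},(j_i+1)q^{m_i})$ the points $\bm y(\bm k)$ form a perfect net hitting every axis-parallel sub-box of $[0,1)^{n-|S|}$ of volume $q^{-\sum_i m_i}$ --- again at all scales at once. Choosing the $m_i$ with $q^{m_i}\le t_i$ for $i\in S$ and invoking $\prod_{i\in S}t_i=\prod_{i\notin S}t_i^{-1}$ then locates a point of the set inside the given box. The union over the finitely many values of $S$, together with $Q^{-1}\Z^n$, is the sought $\mathfrak D_n$, and the density bound $O(T^n)$ again follows from a slab-by-slab Riemann-sum computation.

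The step I expect to be the real obstacle is coping with the \emph{tightness} of the volume-one constraint. A unit box contains no axis-parallel sub-box that is still of volume one, so one cannot simply round the aspect ratio $(t_1,\dots,t_n)$ off to a ``$q$-adic'' one handled verbatim by a net; the rounding always costs a bounded multiplicative factor, which is why $P$ has to be thickened by $D$ when $n=2$ and, in general, why one must pass to a net of slightly larger depth. One then has to verify scrupulously that this thickening, and the straddling of box faces across the integer tiling, cost only a \emph{bounded} factor in the density --- a continuum of rescaled copies would be fatal and would reinstate (or worsen) the logarithmic loss. A second delicate point, invisible when $n=2$, is that a box may be long along several axes at once: a point set with a single unbounded index-direction then falls short, its net depth being too small by a non-constant amount, so the multi-index nets attached to the subsets $S$ are genuinely required; checking that these exist with the stated net property --- which they do, being assembled from the classical digital $(0,s)$-sequences --- is part of the technical work.
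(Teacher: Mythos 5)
Your planar construction is sound and genuinely different from the one the paper describes: Simmons and Solomon use the digit--mirroring set~\eqref{defD2} built from the Van der Corput sequence, whereas you use a Kronecker sequence $\left\{k\alpha\right\}$ together with the three--distance theorem; both exploit the same phenomenon (a fibred point set whose fibres over any window of $N$ consecutive integers are $O(1/N)$--dense uniformly in the window), and your thickening by the constant $D$ and the density count are correct. The real divergence --- and the gap --- is in dimension $n\ge 3$. There the paper does something entirely different and much shorter: $\mathfrak{D}_n$ is taken to be an \emph{admissible lattice} $\Lambda$ (e.g.\ the Minkowski embedding of the ring of integers of a totally real field of degree $n$). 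Admissibility says precisely that the orbit $\Delta_n\left(\R\right)\cdot\Lambda$ is bounded in the space of unimodular lattices (Mahler's criterion), hence has uniformly bounded covering radius; applying $\mathrm{diag}\left(t_1^{-1},\dots,t_n^{-1}\right)$ turns any aligned unit box into a unit cube and shows that a fixed dilate of $\Lambda$ meets every aligned unit box. This disposes of all aspect ratios, all subsets $S$ of long directions, and all rounding and straddling issues at once, with no case analysis.

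The step of your argument that actually fails as written is the construction of the multi--index nets. You take a digital $(0,n-|S|)$--sequence $\left(\bm{y}_K\right)_{K\ge 0}$ and set $\bm{y}(\bm{k})=\bm{y}_{\iota(\bm{k})}$ for a digit--interleaving bijection $\iota:\Z_{\ge 0}^S\to\Z_{\ge 0}$, asserting that ``the defining net property ensures'' that the points indexed by $\prod_{i\in S}[j_iq^{m_i},(j_i+1)q^{m_i})$ form a $(0,\sum_i m_i,\,n-|S|)$--net. But the defining property of a $(0,s)$--sequence only controls \emph{consecutive} index blocks $[Jq^{M},(J+1)q^{M})$, while $\iota$ maps your index boxes to digitally restricted, non--contiguous subsets of $\Z_{\ge 0}$: the free digit positions are $\left\{\sigma(i,j): j<m_i\right\}$, which equals $\left\{0,\dots,M-1\right\}$ only for very special $\bm{m}$. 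In digital terms you need every square submatrix of the generating matrices with row set $\left\{(l,w): w\le d_l\right\}$ and an \emph{arbitrary} admissible column set to be invertible, which the classical constructions do not guarantee. The object you want does exist --- for instance with generating matrices $I,P,P^2,\dots$, where $P$ is the Pascal matrix modulo a prime $q\ge n$, the relevant minors are shifted binomial determinants equal to $\pm 1$ --- but establishing this is a separate argument, not a consequence of the $(0,s)$--sequence property; and the thickening constants for $|S|\ge 2$ must then be checked as you indicate. Until that is supplied, your proof is complete only for $n=2$.
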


Here, a box is said to be \emph{aligned} if it is of the form $\prod_{i=1}^n\left[a_i, b_i\right]$; that is, if it is a box with edges parallel to the coordinate axes. The construction provided by Simmons and Solomon is simple enough to be described here and relies on ideas well--known in discrepancy and lattice theories. When $n=2$, it is defined from a variant of the low discrepancy Van der Corput sequence; namely, 
\begin{equation}\label{defD2}
\mathfrak{D}_2\;=\; \left\{\left(\pm\sum_{n\in\Z}a_n2^n,\, \pm\sum_{n\in\Z}a_n2^{-n}\right)\in\R^2\; :\; \left(a_n\right)_{n\in\Z} \in \left\{0,1\right\}^{\Z}_{fin}\right\},
\end{equation}
where $\left\{0,1\right\}^{\Z}_{fin}$ stands for the set of sequences indexed by $\Z$, taking values in $\{0,1\}$, and with finitely many nonzero terms. When $n\ge 3$, a different approach leads the authors to define $\mathfrak{D}_n$ as any \emph{admissible lattice}, constructing along the way explicit examples of such lattices from an embedding in $\R^n$ of the ring of integers of a totally real number field of degree $n$. Here, a lattice $\Lambda\subset\R^n$ is said to be admissible if $$\inf_{A\in \Delta_n\left(\R\right)}\;\inf_{\bm{v}\in\Lambda\backslash\left\{\bm{0}\right\}}\;\left\|A\bm{v}\right\|_2\;>\; 0, $$ where $\Delta_n\left(\R\right)$ denotes the subgroup of $SL_n\left(\R\right)$ made of those diagonal matrices with positive entries.

As noted by Simmons and Solomon in their paper, these sets $\mathfrak{D}_n$ are nevertheless not Danzer sets. Moreover, it follows from a result of Bambah and Woods~\cite[Theorem 1]{BW} that when $n\ge 3$, any  finite union of affine images of the set $\mathfrak{D}_n$ (which is thus a finite union of grids) cannot be a Danzer set either (see~\S \ref{secgrid} for further details). A similar conclusion should be expected in the case $n=2$ but is not known~:

\begin{pb}
Can a finite union of affine images of the set $\mathfrak{D}_2$ defined in~\eqref{defD2} be a Danzer set in $\R^2$?
\end{pb}
\subsection{To go further}\label{togosec} Simmons and Solomon's Theorem~\ref{simsol} can be seen as a particular case of a more general problem which we now describe.

A unit volume box in $\R^n$ can be obtained as the image of the standard box $[0,1]^n$ by a  transformation $g'\in G_n$  that can be  decomposed uniquely as the product
\begin{equation}\label{decomp}
g'=tka.
\end{equation} 
Here, $t$ stands for a translation, $k$ for an element of the special orthognal group and $a$ for a linear mapping defined by a diagonal matrix with positive entries and determinant one (this is an immediate consequence of the Iwasawa decomposition in $SL_n(\R)$)
Denote by $G'_n$ the subset of the orientation and volume preserving affine group $G_n$ containing all those elements $g'$ that can be decomposed  as~\eqref{decomp}.

\begin{pb}\label{pbmeas}
Assume that $G'_n$ is equipped with a measure $\nu_n$, and let $\eta\ge 0$. Does there exist a set $\mathfrak{D}(\eta)\subset\R^n$ with finite density such that the measure of the set of unit volume boxes \emph{not} intersecting $\mathfrak{D}(\eta)$  is at most $\eta$? More formally, does the inequality $$\nu_n\left(\left\{g'\in G'_n\; :\; g'\left([0,1]^n\right)\cap\mathfrak{D}(\eta)=\emptyset\right\}\right)\;\le\; \eta$$ hold?
\end{pb}

Of course, Problem~\ref{pbmeas} only becomes relevant when defining a specific measure $\nu_n$. In this respect, a natural choice would be to choose $\nu_n$ as the product of the Haar measures on each of the components $t$, $k$ and $a$. It should then be noted that \emph{the case $\eta=0$ is nothing but a reformulation of the Danzer Problem}~: indeed, this  follows from the observation that a convex body not intersecting a finite density (hence discrete) set will also necessarily miss a neighbourhood of this set. 

This observation can also be used to recast Simmons and Solomon's result in the framework of Problem~\ref{pbmeas}. To see this, take $\nu_n$ to be the product of the Haar measures on the $t$ and $a$ components in the decomposition~\eqref{decomp}, and of the Dirac mass at the identity on the $k$ component. Theorem~\ref{simsol} is then equivalent to a positive answer to Problem~\ref{pbmeas} in the case $\eta=0$.

\section{Some Problems (closely) related to Danzer's} 

\subsection{The Danzer--Rogers Problem}\label{danzerrogers}

The construction of a set satisfying the Danzer property with the best known density growth in Theorem~\ref{solweibest} is achieved thanks to an equivalence between Danzer's Problem and one of a more combinatorial nature~: the Danzer--Rogers Problem. To describe this approach, we first introduce some definitions. 

A \emph{Range Space} is a pair $\left(N, \mathcal{R} \right)$, where $N$ is a set whose elements are referred to as \emph{points}, and  $\mathcal{R}$ is a subset of the power set $\mathcal{P}(N)$ whose elements are referred to as \emph{ranges}. Given a probability measure $\mu$  on $N$ and given $\epsilon>0$, a subset $N_{\epsilon}\subset N$ is said to be an \emph{$\epsilon$--net} if $N_{\epsilon}$ intersects non trivially any range $S$ as soon as $\mu\left(S\cap N\right)\ge \epsilon$. Note that when $N$ is finite with $n$ elements and when $\mu$ is chosen as the counting measure, the latter condition can be reformulated as $\#\left(S\cap N\right)\ge \epsilon n$.

We now state \cite[Theorem 1.4]{SW}~: 

\begin{thm}[Solomon \& Weiss, 2016] \label{equivdanzerog}
Let $g~:\R_+\rightarrow \R_+$ be such that $x\mapsto g(x)/x^n$ is non--decreasing. Assume that there exists $C>0$ such that  $g(2x)\le C \cdot g(x)$ for all $x\in\R_+$. In the above notation, set $N=[0,1]^n$,  equip it with the Lebesgue measure and choose $\mathcal{R}$ as the set of all boxes contained in $[0,1]^n$. 

Then, \sloppy there exists a subset $\mathfrak{D}\subset\R^n$ enjoying the Danzer property with density growing like $\#\left(\mathfrak{D}\cap B_2\left(\bm{0}, T\right)\right) = O\left(g(T)\right)$ if, and only if, for any $\epsilon>0$, there exists an $\epsilon$--net $N_{\epsilon}$ in the Range Space $\left(N, \mathcal{R}\right)$ which has a finite cardinality growing like $\# N_{\epsilon}=O\left(g\left(\epsilon^{-1/n}\right)\right)$.
\end{thm}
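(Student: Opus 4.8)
The plan is to establish the two implications by translating between "points of $\mathfrak{D}$ near a box" and "points of an $\epsilon$-net inside the unit box", using the scaling invariance of the Danzer property already noted in the excerpt. The key dictionary is: a unit-volume box $g([0,1]^n)$ can be rescaled so that one of its sides has length roughly $\epsilon^{-1/n}$ in each coordinate after shrinking; more to the point, given a box $\mathcal{R}$ of volume $1$, scaling the whole configuration by a factor $T$ turns $\mathcal{R}$ into a box of volume $T^n$, and the event that $\mathfrak{D}$ meets $\mathcal{R}$ becomes the event that a dilated copy of $\mathfrak{D}$ meets a box of volume $T^n$. I would set $\epsilon = T^{-n}$ throughout, so that $g(\epsilon^{-1/n}) = g(T)$, making the two asymptotics literally the same quantity.

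First I would prove the direction ($\epsilon$-nets $\Rightarrow$ Danzer set with density $O(g(T))$). Fix a dyadic scale $T = 2^j$ and let $\epsilon = 2^{-jn}$. Take an $\epsilon$-net $N_\epsilon \subset [0,1]^n$ with $\# N_\epsilon = O(g(T))$. Tile $\R^n$ by translates of the cube $[0, T]^n$ indexed by $T\Z^n$, and inside each such cube place a rescaled-and-translated copy of $N_\epsilon$ (scaled up by $T$). Then form $\mathfrak{D}$ as the union over all dyadic scales $j \ge 0$ of these point sets. For the density bound: inside $B_2(\bm 0, T)$ one sees $O((T/2^j)^n)$ cubes from scale $j$, each contributing $O(g(2^j))$ points, and since $x \mapsto g(x)/x^n$ is non-decreasing, $\sum_{j : 2^j \le T} (T/2^j)^n g(2^j) = T^n \sum_j g(2^j)/2^{jn} = O(T^n \cdot g(T)/T^n) = O(g(T))$; scales $j$ with $2^j > T$ contribute boundedly many points per cube over $O(1)$ cubes, so they are negligible. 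For the Danzer property: given any box $\mathcal{R}$ of volume $1$, pick the dyadic scale $j$ so that the shortest side of $\mathcal{R}$ has length comparable to $2^{-j}$; after rescaling by $2^j$ this becomes a box of volume $2^{jn}$ whose shortest side is $\Theta(1)$, hence it contains a translate of $[0,1]^n$; that translate meets one of the tiling cubes $[0,2^j]^n$ in a sub-box of normalized volume $\ge \epsilon = 2^{-jn}$ (here one uses the doubling hypothesis $g(2x) \le Cg(x)$ and a routine geometric argument to absorb the loss from the cube-tiling boundary, possibly by using a few shifted tilings), and the $\epsilon$-net property forces a point of $N_\epsilon$ inside it, i.e. a point of $\mathfrak{D}$ inside $\mathcal{R}$.

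Conversely, for (Danzer set $\Rightarrow$ $\epsilon$-nets), suppose $\mathfrak{D}$ has the Danzer property with $\#(\mathfrak{D} \cap B_2(\bm 0, T)) = O(g(T))$. Fix $\epsilon > 0$ and set $T = \epsilon^{-1/n}$. Consider the dilate $T \cdot [0,1]^n = [0,T]^n$, which has volume $T^n = \epsilon^{-1}$. Let $N_\epsilon$ be the image under $\bm x \mapsto \bm x / T$ of $\mathfrak{D} \cap [0,T]^n$; then $\# N_\epsilon \le \#(\mathfrak{D} \cap B_2(\bm 0, \sqrt{n}\, T)) = O(g(\sqrt n\, T)) = O(g(T))$ by the doubling hypothesis, so $\# N_\epsilon = O(g(\epsilon^{-1/n}))$ as required. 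It remains to check it is an $\epsilon$-net: if $S \in \mathcal{R}$ is a box inside $[0,1]^n$ with $\mathrm{vol}(S) \ge \epsilon$, then $T \cdot S$ is a box inside $[0,T]^n$ of volume $T^n \epsilon \ge 1$; choose a unit-volume sub-box $S' \subseteq T\cdot S$; the Danzer property gives a point of $\mathfrak{D}$ in $S'$, hence in $T \cdot S$, hence a point of $N_\epsilon$ in $S$.

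The main obstacle is the first direction's Danzer-property verification, specifically the boundary-effect problem: when I chop $\R^n$ into cubes $[0,2^j]^n$ and only populate each cube with a single rescaled net, a unit box straddling the boundary between two cubes need not contain a point, since the net only controls sub-boxes that are fully inside a single cube. The standard fix — and the point I would need to be most careful about — is to either take a bounded union of shifted tilings at each scale (so that any unit box is fully inside some cube of some shift, enlarging $\mathfrak{D}$ only by a constant factor), or to work with nets for boxes in $[0,2]^n$ rather than $[0,1]^n$ and overlap the tiles; one then must check that after rescaling, a volume-$1$ box really does contain a unit cube sitting inside one tile whose intersection with that tile still has normalized volume $\ge \epsilon$, which is where the precise choice of dyadic scale $j$ matched to the shortest side of $\mathcal{R}$, together with $g(2x) \le Cg(x)$, does the work.
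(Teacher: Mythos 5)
The converse implication in your proposal (Danzer set $\Rightarrow$ $\epsilon$--nets, by restricting $\mathfrak{D}$ to $[0,T]^n$ with $T=\epsilon^{-1/n}$ and rescaling) is correct and is the standard argument. The forward implication, however, has a genuine gap: the density count for your multi--scale construction is wrong, in two ways. First, the claimed evaluation
\[
\sum_{j\,:\,2^j\le T}\left(\frac{T}{2^j}\right)^n g\!\left(2^j\right)\;=\;T^n\sum_{j\,:\,2^j\le T}\frac{g\!\left(2^j\right)}{2^{jn}}\;=\;O\!\left(g(T)\right)
\]
is false. The hypothesis is that $x\mapsto g(x)/x^n$ is \emph{non--decreasing}, so the summands $g(2^j)/2^{jn}$ form a non--decreasing sequence; a sum of $\lfloor\log_2 T\rfloor+1$ non--decreasing terms is only bounded by $(\log_2 T+1)\cdot g(T)/T^n$, not by $O(g(T)/T^n)$ --- geometric decay of the tail would require $g(2x)/(2x)^n\ge(1+\delta)\,g(x)/x^n$, which is not assumed. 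In the most important case $g(x)=x^n$ (the genuine Danzer--Rogers equivalence of Problem~\ref{danzrogpb}) every summand equals $1$ and your bound is exactly $\Theta(T^n\log T)=\Theta(g(T)\log T)$, so your construction loses a logarithm and does not prove the stated equivalence. Second, and fatally, the scales $2^j>T$ are not ``negligible'': an $\epsilon$--net for boxes must in particular meet every sub--cube of volume $\epsilon$, hence is $\epsilon^{1/n}$--dense in $[0,1]^n$; after rescaling by $2^j$ with $\epsilon=2^{-jn}$, each tile therefore carries a $1$--dense set of $\Theta(g(2^j))$ points (not ``boundedly many''), and contributes at least $c\,T^n$ points to $B_2(\bm{0},T)$ for \emph{every} $j$ with $2^j\ge T$. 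Summing over the infinitely many such $j$, the set $\mathfrak{D}$ you build is not even locally finite.

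The upshot is that taking an independent full tiling of $\R^n$ at every dyadic scale, each carrying a fresh net, cannot work: the scales must interact. This is precisely where the hypothesis that $g(x)/x^n$ is non--decreasing has to be used in an essential, not cosmetic, way --- for instance to show that an $\epsilon$--net may be replaced, at no cost in cardinality, by one that is hierarchically spread out (subdividing $[0,1]^n$ into $K^n$ sub--cubes and replacing the net in each by a fresh $(\epsilon K^n)$--net costs $K^n\cdot O(g(\epsilon^{-1/n}/K))=O(g(\epsilon^{-1/n}))$ exactly because $g(x/K)\le g(x)/K^n$), and then to assemble the scales so that larger scales subsume, rather than duplicate, the work of smaller ones. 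The boundary issue you flag (and your fix via John's theorem, extracting from the rescaled box a sub--box of proportional volume inside a single tile) is the part of your argument that \emph{does} work; it is the density bookkeeping, which you treat as routine, that is the actual crux of the theorem. The survey does not reproduce the proof, which is in \cite[\S 5]{SW}; you should consult it for how the scales are combined without the logarithmic loss.
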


The case $g(x)=x^n$  provides an equivalence between the Danzer Problem and the following open problem in combinatorial and computational geometry which is conjectured not to hold true in~\cite[\S 6]{RT}~:

\begin{pb}[The Danzer--Rogers Problem --- Statement Equivalent to the Danzer Problem]\label{danzrogpb}
Consider the Range Space $\left(N, \mathcal{R}\right)$, where $N=[0,1]^n$ is equipped with the Lebesgue measure, and where $\mathcal{R}$ is the set of boxes contained in $[0,1]^n$. Given $\epsilon>0$, does there exist an $\epsilon$--net $N_{\epsilon}$ such that $\# N_{\epsilon}=O\left(\epsilon^{-1}\right)$?
\end{pb}

To be precise, the classical statement of the Danzer--Rogers problem (see~\cite{BradC, PT} and the references therein) is concerned with the case $n=2$ and the case where the range space is the set of all convex bodies. However, here again, there is no loss of generality in restricting oneself to the set of boxes.  It is furthermore easy to see that the lower bound $\# N_{\epsilon}\ge \epsilon^{-1}-1$ always holds. Indeed, given $k$ points in $[0,1]^n$, consider the hyperplanes containing these points and all parallel to a given $(n-1)$--dimensional face of the hypercube. These hyperplanes then partition $[0,1]^n$ into $k+1$ boxes, at least one of which has area larger than $1/(k+1)$, whence the claim. \\

Theorem~\ref{solweibest} by Solomon and Weiss is obtained by combining Theorem~\ref{equivdanzerog} with the construction of an $\epsilon$--net with the best known growth for its cardinality. This construction is due to Haussler and Welzl~\cite{HW}, is of a purely probabilistic nature, and provides the bound $\# N_{\epsilon}=O\left(\epsilon^{-1}\log\left(\epsilon^{-1}\right)\right)$. The following question is of interest in order to obtain a deterministic construction~:

\begin{pb}
Under the assumptions of the Danzer--Rogers Problem, does there exist a \emph{deterministic} $\epsilon$--net $N_{\epsilon}$ such that $\# N_{\epsilon}=O\left(\epsilon^{-1}\log\left(\epsilon^{-1}\right)\right)$?
\end{pb}

The difficulty of the Danzer--Rogers Problem can be seen through the following counterexamples when slightly modifying its assumptions. In~\cite{PT}, Pach and Tardos show that the answer to Problem~\ref{danzrogpb} is negative in the planar case $n=2$ when the range space is extended to the class of so--called quasi--boxes, which are essentially obtained by ``bending'' one of the axes of a box by a bounded angle (see~\cite[p.6]{PT} for  a precise de\-fi\-ni\-tion). Also, Alon notes in~\cite[p.6]{Alon} that there are probability distributions on $[0,1]^2$, other than the Lebesgue measure, for which the answer to Problem~\ref{danzrogpb} is negative.

The Danzer--Rogers problem nevertheless admits a positive solution when the range space $\mathcal{R}$ is the set of all aligned boxes in $[0,1]^n$~: this follows upon combining the proof of Theorem~\ref{equivdanzerog} and Theorem~\ref{simsol}.

\subsection{The Boshernitzan--Conway Problem} Since it is not clear whether the Danzer Problem can be answered in the affirmative, studying a stronger version of the problem 
provides a way to test the limits of the assumptions under which it could be true. Because there is already no loss of generality in restricting the set of covex bodies to the set of boxes, one can look for a condition stronger than that of finite density. In this respect, recall the following hierarchy between discrete sets : $$\textrm{uniformly discrete} \Rightarrow \textrm{finite upper uniform density} \Rightarrow  \textrm{finite density} \Rightarrow \textrm{discrete}.$$
Here, a set $\mathfrak{D}\subset\R^n$ is said to be \emph{uniformly discrete} if the distance between any two distinct points in $\mathfrak{D}$ is uniformly bounded below by a strictly positive constant; that is, if  $$\inf_{\underset{\bm{x}\neq\bm{y}}{\bm{x}, \bm{y}\in\mathfrak{D}}}\; \left\|\bm{x}-\bm{y}\right\|_2\;>\; 0. $$
It is said to have a finite \emph{upper uniform density} if 
\begin{equation}\label{defupperfinitedensity}
\limsup_{T\rightarrow\infty}\; \sup_{\bm{x}\in\R^n}\; \frac{\#\left(\mathfrak{D}\cap B_2\left(\bm{x}, T\right)\right)}{T^n}\;<\; \infty.
\end{equation}

The following problem, with assumptions stronger than those of Problem~\ref{danzer}, is independently due to Boshernitzan and Conway. Neither of them seemed to have been aware of the question posed by Danzer. For reasons narrated in~\cite{biocon}, Conway coined it the ``Dead Fly Problem''. 

\begin{pb}[The Boshernitzan--Conway Problem]\label{boshcon}
Does there exist a \emph{uniformly discrete} set intersecting any convex body with volume 1 in $\R^n$?
\end{pb}

This problem is open, although the relaxation of the volume constraint through the concept of dense forests provides sets very close to achieving the desired conclusion (in a suitable sense). This will be detailed in \S~\ref{stateartdenseforest}.

\subsection{The Gowers Problem} Gowers suggested in~\cite{Gowers}  a version of Danzer's Problem~\ref{danzer} stronger than the orginal one which, in substance, is obtained by  combining some of the assumptions of its dual version\footnote{In private communication, Timothy Gowers nevertheless indicated to the author that he was not aware of the statement of the Dual Danzer Problem when posing Problem~\ref{gowerpb}.} (Problem~\ref{dualdanzer}). This was recently solved in~\cite{SSW}.

\begin{pb}[Gowers, 2000 --- solved]\label{gowerpb}
Does there exists a set $\mathfrak{D}\subset\R^n$  and an integer $m\ge 1$ such that for any convex body $K\subset\R^n$ with unit volume, it holds that $1\le \#\left(K\cap\mathfrak{D}\right)\le m$?
\end{pb}

\begin{thm}[Solan, Solomon \& Weiss, 2017]\label{mainssw}
Gowers' problem admits a negative answer~: if $\mathfrak{D}\subset\R^n$ satisfies the Danzer property, then for any $\epsilon>0$ and any $k\ge 1$, there exists an ellipsoid $E_k$ with volume less than $\epsilon$ such that $\#\left(E_k\cap\mathfrak{D}\right)\ge k$.
\end{thm}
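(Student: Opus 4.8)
The plan is to argue by contradiction, so suppose $\mathfrak{D}$ is a Danzer set for which there exist $\epsilon_0>0$ and $k_0\ge 1$ such that every ellipsoid of volume less than $\epsilon_0$ meets $\mathfrak{D}$ in fewer than $k_0$ points. The first step is to record what the Danzer property forces \emph{locally}: by the reduction of \S\ref{equistatesec}, $\mathfrak{D}$ must meet every box of volume one, hence (rescaling) every box of volume $s$ for any fixed $s>0$; thus there is a scale $R_0=R_0(\mathfrak{D})$ such that every axis-parallel cube of side $R_0$ already contains a point of $\mathfrak{D}$. Combined with the standing hypothesis, this pins $\mathfrak{D}$ between two regimes: it is ``not too sparse'' on cubes of size $R_0$, yet ``uniformly bounded'' on thin ellipsoids of small volume. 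The strategy is to exhibit a single thin ellipsoid of arbitrarily small volume that is nonetheless forced, by the first property, to swallow as many points of $\mathfrak{D}$ as we like, contradicting the second.

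The key construction is a long, thin ellipsoidal ``tube''. Fix a large parameter $L$ and a small parameter $\delta$ with $\delta^{n-1}L<\epsilon_0$, say $E = E(L,\delta)$ the ellipsoid with one semi-axis of length $L$ along the first coordinate direction and the other $n-1$ semi-axes of length $\delta$; its volume is $\asymp_n \delta^{n-1}L$, which we keep below $\epsilon_0$. Inside a slightly shorter and thinner coaxial cylinder we can fit a disjoint family of roughly $c_n\, L/\delta$ axis-parallel cubes of side $\delta$... but side $\delta$ is too small to invoke the Danzer property directly, which only guarantees a point in cubes of side $R_0$. So instead one should rotate and shear: the right object is an ellipsoid whose long axis points in a direction $\bm{u}$ that is \emph{badly approximable} (Diophantine), chosen so that the orbit of the $\delta$-neighbourhood of a line segment of length $L$ in direction $\bm{u}$ still contains a genuine box of volume $\gg 1$ for every fixed $\delta$ once $L$ is taken large enough depending on $\delta$. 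Concretely, a $\delta$-thin tube of length $L$ around a segment in a Diophantine direction contains, for $L$ large enough, an axis-parallel box of volume $1$ (this is the standard ``a long thin tube in an irrational direction fattens up to a unit box'' phenomenon used throughout this circle of problems); partitioning the tube into $\asymp L^{1-\mathrm{const}}$ consecutive such boxes along its length and applying the Danzer property to each yields that many distinct points of $\mathfrak{D}$ inside the tube, hence inside the enclosing ellipsoid $E(L,\delta)$.

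Assembling the pieces: given $\epsilon>0$ and $k\ge1$, first fix $\delta$ small enough that any ellipsoid with long semi-axis-to-short semi-axis profile $(\,\cdot\,,\delta,\dots,\delta)$ and long semi-axis $L$ has volume $<\epsilon$ as soon as $L\le \delta^{-2}$, say; then choose a Diophantine direction $\bm{u}$ and take $L$ large enough (still $\le\delta^{-2}$ after perhaps shrinking $\delta$ once more) that the $\delta$-tube of length $L$ in direction $\bm{u}$ splits into at least $k$ consecutive translated copies of a unit-volume box. The Danzer property gives at least one point of $\mathfrak{D}$ in each, so at least $k$ points of $\mathfrak{D}$ lie in $E_k := E(L,\delta)$, an ellipsoid of volume $<\epsilon$; this contradicts the assumption when $k\ge k_0$ and $\epsilon<\epsilon_0$, and more importantly it is exactly the assertion of the theorem. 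I expect the main obstacle to be the geometric lemma underlying the second step: quantifying how large $L$ must be, relative to $\delta$ and the Diophantine exponent of $\bm{u}$, for a $\delta$-thin tube of length $L$ in direction $\bm{u}$ to contain a unit-volume box — and doing so with the boxes kept \emph{disjoint} so that the resulting points of $\mathfrak{D}$ are genuinely distinct. Controlling this trade-off between $L$, $\delta$ and the chosen direction, so that the volume of the final ellipsoid stays below $\epsilon$ while it still contains $k$ disjoint unit boxes, is the heart of the argument; everything else is bookkeeping with John's theorem and the scaling invariance of the Danzer property noted after Problem~\ref{danzer}.
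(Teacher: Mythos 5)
There is a fatal volume obstruction at the heart of your construction. The Danzer property for a \emph{fixed} set $\mathfrak{D}$ only guarantees a point of $\mathfrak{D}$ in convex bodies of volume at least $1$ (rescaling changes the set, not the threshold for the given $\mathfrak{D}$). Your plan is to place $k$ disjoint unit--volume boxes inside the thin tube, and hence inside the ellipsoid $E(L,\delta)$ of volume $\asymp_n\delta^{n-1}L<\epsilon$. But containment is monotone for volume: a convex body of volume $<\epsilon<1$ cannot contain even a single box of volume $1$, let alone $k$ disjoint ones; if the tube ``splits into $k$ consecutive copies of a unit--volume box'' then its volume is at least $k$, contradicting the bound $\delta^{n-1}L<\epsilon$ you impose in the same sentence. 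The ``long thin tube in a Diophantine direction fattens up'' phenomenon you invoke is a statement about the tube \emph{modulo} $\Z^n$ (its projection becomes dense or covers the torus, which is why such tubes meet lattices and grids); it does not produce a genuine Euclidean sub-box of volume $1$ inside the tube. This is precisely why the theorem is not a soft consequence of the Danzer property: one must manufacture many points in a region where the Danzer property, applied naively, yields nothing.

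The actual arguments of Solan, Solomon and Weiss in~\cite{SSW} avoid this trap in two different ways, neither resembling your route. The constructive proof is an induction on $k$: starting from an ellipsoid with $k$ points, one applies a carefully chosen volume--preserving linear deformation and a pigeonhole argument over a long chain of overlapping images to force a $(k+1)$-st point into a (controllably) small ellipsoid; this is what yields the quantitative bound feeding into Theorem~\ref{enetgowerssol}. The dynamical proof works in the Chabauty--Fell space $\mathcal{X}_n$: the Danzer property is $\emptyset\notin\overline{G_n\cdot\mathfrak{D}}$, and one shows this forces $\R^n\in\overline{G_n\cdot\mathfrak{D}}$, i.e.\ some images $g\mathfrak{D}$ are arbitrarily dense in large balls; pulling back by $g^{-1}$ produces arbitrarily many points of $\mathfrak{D}$ in ellipsoids of arbitrarily small volume. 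To repair your write-up you would need to replace the ``unit boxes inside the small ellipsoid'' step by one of these mechanisms; as it stands the central step cannot be carried out.
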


Two proofs are provided in~\cite{SSW} for this statement~: one which is constructive and based on an induction on $k\ge 1$, and another one of a dynamical nature. 

The dynamical proof relies on a reformulation of the Gowers property in terms of the dynamical system $\left(\mathcal{X}_n, G_n\right)$ introduced in \S\ref{equistatesec}. More precisely, it is shown in~\cite[Lemma 2.3]{SSW} that a set $\mathcal{G}\subset\R^n$ is a solution to the Gowers Problem if, and only if,  two conditions are simultaneously satisfied~: $\emptyset\not\in \overline{G_n\cdot\mathcal{G}}$ and $\R^n\not\in \overline{G_n\cdot\mathcal{G}}$ (the closures are here again taken with respect to the Chabauty--Fell topology). This should be compared with the statement of  Problem~\ref{equivdanzer}. The authors then show that at least one of these relations cannot hold.

As for the constructive proof, it provides a quantitative bound for the diameter of the ellipsoid $E_k$ satisfying the conclusion of Theorem~\ref{mainssw}. This is of interest in view of the following reformulation of Gowers' Problem in terms of $\epsilon$--nets.

\begin{pb}[Statement Equivalent to Gowers' Problem --- solved]\label{enetgowers}
Consider the Range Space $\left(N, \mathcal{R}\right)$, where $N=[0,1]^n$ is equipped with the Lebesgue measure, and where $\mathcal{R}$ is the set of boxes contained in $[0,1]^n$. Let $m\ge 1$. Given $\epsilon>0$, does there exist an $\epsilon$--net $N_{\epsilon}$ such that for any range $S$, it holds that  $1\le \# \left(N_{\epsilon}\cap S\right)\le m$?
\end{pb}

The constructive proof of Theorem~\ref{mainssw} is adapted in~\cite[Theorem 1.4]{SSW} to provide a quantitative refutation to Problem~\ref{enetgowers}~:

\begin{thm}[Solan, Solomon \& Weiss, 2017]\label{enetgowerssol}
Problem~\ref{enetgowers} admits a  negative answer~: for every $\epsilon>0$, \sloppy there exists a range $S$ of volume at least $\epsilon$ such that 
\begin{equation}\label{lowerbound}
\# \left(N_{\epsilon}\cap S\right) = \Omega\left(\log\log\left(\epsilon^{-1}\right)\right).
\end{equation}
\end{thm}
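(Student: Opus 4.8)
The plan is to reduce Theorem~\ref{enetgowerssol} to the constructive version of Theorem~\ref{mainssw} by tracking the quantitative dependence of the ellipsoid's volume on the number of points it is forced to contain, and then to unwind the equivalence between $\epsilon$-nets and Danzer-type sets recorded in Theorem~\ref{equivdanzerog} and its Gowers analogue (Problem~\ref{enetgowers}). Concretely, suppose $N_\epsilon$ were an $\epsilon$-net in the range space $\left(N,\mathcal{R}\right)$, with $N=[0,1]^n$ and $\mathcal{R}$ the boxes in $[0,1]^n$, such that every range $S$ with $\mathrm{vol}(S)\ge\epsilon$ satisfied $1\le\#(N_\epsilon\cap S)\le m$ for some fixed $m\ge 1$. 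The first step is to rescale: a box of volume $\epsilon$ inside $[0,1]^n$, after applying an appropriate element of $G_n$, becomes a unit-volume box, and the net $N_\epsilon$ (suitably dilated and periodised, exactly as in the proof of Theorem~\ref{equivdanzerog}) becomes a candidate discrete set in $\R^n$ meeting every unit-volume box in between $1$ and $m$ points — i.e., a solution to Gowers' Problem at ``scale $\epsilon$''. The point is that the cardinality $\#N_\epsilon$ controls the density of this periodised set.

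The second step is the heart of the matter: invoke the \emph{constructive} proof of Theorem~\ref{mainssw}, which proceeds by induction on $k$ and yields an explicit lower bound on the volume of the smallest ellipsoid that any Danzer set is forced to contain $k$ points in. I would extract from that induction the quantitative statement that, if a discrete set meets every unit box in at least one point, then for every $k$ there is an ellipsoid of volume $\gtrsim \exp(-c\,2^{k})$ (or whatever the precise double-exponential-type bound in~\cite{SSW} is) containing $\ge k$ of its points. Contrapositively, a set in which \emph{no} ellipsoid of volume $\ge\epsilon$ contains more than $m$ points can only meet boxes at the relevant scale if $m\gtrsim\log\log(\epsilon^{-1})$. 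Translating back through the rescaling of the first step, any valid $\epsilon$-net $N_\epsilon$ must admit a range $S$ of volume $\ge\epsilon$ with $\#(N_\epsilon\cap S)=\Omega\!\left(\log\log(\epsilon^{-1})\right)$, which is precisely~\eqref{lowerbound}.

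The main obstacle — and the step that needs genuine care rather than routine bookkeeping — is making the induction in the constructive proof of Theorem~\ref{mainssw} \emph{effective} with the right dependence on $k$, and then checking that the scaling dictionary between ``ellipsoid of small volume containing many points of a Danzer set'' and ``range of volume $\ge\epsilon$ containing many points of an $\epsilon$-net'' does not degrade this bound. One has to be attentive to two subtleties: John's theorem (quoted in \S\ref{equistatesec}) is used to pass between ellipsoids and boxes, and this only costs a dimensional constant $\alpha_n$, which is harmless; but the periodisation step that converts a finite net into an infinite set in $\R^n$ must be done so that the ellipsoids produced by the $\R^n$-statement actually pull back to genuine ranges (boxes) inside $[0,1]^n$ — here one replaces ellipsoids by their inscribed/circumscribed boxes again, and must verify the volume loss is only a constant factor, so that $\Omega(\log\log(\epsilon^{-1}))$ survives. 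Once the effective induction is in hand, the remaining arguments are exactly the (by now standard) translations between the three equivalent pictures — discrete sets in $\R^n$, the $G_n$-action on $\mathcal{X}_n$, and $\epsilon$-nets — and require no new ideas.
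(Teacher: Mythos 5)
Your proposal follows essentially the same route as the paper, which itself only records that the result is obtained by adapting the constructive (inductive) proof of Theorem~\ref{mainssw} from~\cite{SSW} and tracking the volume of the ellipsoids through the induction; you have correctly identified both the quantitative mechanism (a doubly-exponential decay of the ellipsoid volume in $k$ is exactly what yields the $\log\log\left(\epsilon^{-1}\right)$ threshold) and the bookkeeping needed to pass between ellipsoids in $\R^n$ and box ranges in $[0,1]^n$ via John's theorem and rescaling. The one step you defer --- making the induction effective in $k$ --- is precisely the content of \cite[Theorem 1.4]{SSW}, so nothing is missing relative to what the survey itself supplies.
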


Here, the notation in~\eqref{lowerbound} means that  the lower limit, as $\epsilon\rightarrow 0^+$, of the ratio between the functions on the left--hand side and the right--hand side is strictly positive.

To be precise, Problem~\ref{enetgowers} and Theorem~\ref{enetgowerssol} are stated in~\cite{SSW} in the case that the range space is the set of convex bodies in $[0,1]^n$. In view of John's Theorem (see \S\ref{equistatesec}), there is, here again, no loss of generality in restricting the consideration to the smaller set of boxes. 
Two natural problems emerge from Theorem~\ref{enetgowerssol}. The first one is essentially a particular case of~\cite[Question 5.6]{SSW} and second one is~\cite[Question 5.7]{SSW}.

\begin{pb}
What are the range spaces $\left(N, \mathcal{R}\right)$, where $N=[0,1]^n$ is equipped with the Lebesgue measure, for which Problem~\ref{enetgowers} admits a positive solution for some $m\ge 1$?
\end{pb}

A partial answer to this question is provided in~\cite[Proposition 5.5]{SSW}~:  Problem~\ref{enetgowers} admits a positive solution for some $m\ge 1$ when $\mathcal{R}$ is the set of aligned boxes. This is a direct consequence of Theorem~\ref{simsol}.

\begin{pb}[Nati Linial]
Can the lower bound in~\eqref{lowerbound} be improved by replacing it with a function $f~: \R_+\rightarrow\R_+$ tending to infinity as $\epsilon\rightarrow 0^+$ faster than $\Omega\left(\log\log\left(\epsilon^{-1}\right)\right)$? For instance, can one take $f\left(\epsilon\right)=\Omega\left(\log\left(\epsilon^{-1}\right)\right)$?
\end{pb}

\section{Dense Forests and their Relations with the Danzer Problem}\label{secforests}

\subsection{Visibility in a Dense Forest} Bishop's investigations  in a problem of rectifiability of curves led him to the following problem stated in~\cite[\S 2.3]{Bishop}~: how must the trees be positioned in a forest if they all have the same radius, say $\epsilon>0$, and if their centers are never closer than one unit, to fullfil this property~:  no matter where an observer stands and what direction he looks in, he will never be able to see further than some finite distance $\mathcal{V}$? What is the smallest value of the visibility $\mathcal{V}$ that can be achieved this way in terms of $\epsilon$?

This is an instance of a visibility problem, the most well--known of which is probably  P\'olya's orchard problem posed in~\cite{Polya})~: how thick  must be the trunks of the trees in a regularly spaced circular orchard grow  if an observer should not be able to see the horizon when standing in the center? The literature on visibility problems is abundant, and the reader is referred to the papers~\cite{Adiceam, Adiceambis, BS} and to the references within for further details and accounts on the latest developments in this area.

The fundamental difference between P\'olya's and Bishop's problems is that in the former, the observer is standing at the origin whereas in the latter, he is allowed to stand anywhere in the space. This makes Bishop's question a lot harder, and this is also what relates it to Danzer's Problem. \\

To make this relation explicit, we state the following definition, which can be seen as a formalisation of Bishop's question motivated by the statement of Problem~\ref{danzer}~:
\begin{defi}
A \emph{dense forest} is a subset $\mathfrak{F}\subset\R^n$ which admits a function $\mathcal{V}~: \epsilon>0 \mapsto \mathcal{V} (\epsilon)\ge 0$ defined in a neighbourhood of the origin such that for any $\epsilon>0$ small enough, any line segment with length $\mathcal{V}(\epsilon)$ comes $\epsilon$--close to a point in $\mathfrak{F}$. In other words, the following holds for all $\epsilon >0$ small enough~: 
\begin{equation}\label{defdenseforest}
\forall  \bm{\alpha}\in\Sph^{n-1}, \;\;\; \forall  \bm{\beta}\in\R^{n},  \;\;\; \exists t\in [0, \mathcal{V}(\epsilon)], \;\;\; \exists \bm{f}\in\mathfrak{F}, \;\;\; \left\|\bm{\beta}+t\bm{\alpha}-\bm{f}\right\|_{\infty} < \epsilon.
\end{equation}
The function $\mathcal{V}$ is referred to as a {\em visibility function} for $\mathfrak{F}$.
\end{defi}

In this definition, $\left\|\; .\; \right\|_{\infty}$ stands for the sup norm and $\Sph^{n-1}$ denotes the Euclidean sphere in dimension $n$. By considering a packing of the hypercube with sidelength $\epsilon^{-1}$ centered at the origin by translates of the box $ \left[0, \epsilon\right]^{n-1}\times\left[0, \mathcal{V}\left(\epsilon\right)\right]$, the finite density condition implies that a visibility function in an $n$--dimensional dense forest admits a lower bound  of the form $\mathcal{V}\left(\epsilon\right)\ge c\cdot\epsilon^{-(n-1)}$ for some $c>0$ and all $\epsilon>0$. This observation prompts a natural question~:

\begin{pb}[Main Problem related to the Concept of Dense Forests]\label{mainpbdenseforest}
Does there exist a dense forest with visibility $\mathcal{V}\left(\epsilon\right)=O\left(\epsilon^{-(n-1)}\right)$ in dimension $n\ge 2$?
\end{pb}

Here and throughout, given two functions of a single variable $f$ and $g$, the notation $f=O\left(g\right)$ means that there exists $C>0$ such that $f(x)\le C\cdot g(x)$ for all values of the variable $x$.\\

The construction of dense forests can be seen as a suitable relaxation of the volume constraint in Danzer's Problem. Indeed, asking that a line segment should get $\epsilon$--close to a point is the same as asking that the $\epsilon$--neighbourhood of this line segment  should contain this point (this neighbourhood is measured in terms of the sup norm according to the above definition). Such a neighbourhood contains an inner box which has a long--edge with length $\mathcal{V}\left(\epsilon\right)$ and $n-1$ short edges, all with lengths at least $\epsilon$ (in other words, the inner box can be taken as  the image, under an orthognal transformation and a translation, of the box $\left[0, \mathcal{V}\left(\epsilon\right)\right]\times \left[0, \epsilon\right]^{n-1}$). The neighbourhood is furthermore contained in an outer box obtained by dilating the inner one by a factor depending only on the dimension. See Figure~\ref{neigfres} for an illustration in the planar case.

\begin{figure}[h!]
\begin{center}
\scalebox{1}{

\begin{subfigure}{.45\textwidth}
  \centering

  \begin{tikzpicture}    
  
  	\draw[ultra thick] (2-0.5, 2.25-0.5)-- (2+0.5, 2.25-0.5)-- (2+0.5, 2.25+0.5) -- (2-0.5, 2.25+0.5)--cycle;
  	\draw[ultra thick] (0,0)--(5,5);
  	\draw plot[mark=x, mark options={color=black, scale=2,  thick}] coordinates {(2, 2.25)};
  	\draw[<->] (2,2.25)--(1.5, 2.25);
	\node [above] at (1.75,2.2) {$\epsilon$};    	
  	\draw[<->] (0+0.3536,0-0.3536)--(5+0.3536,5-0.3536);
  	\node [below right] at (2.5+0.3536,2.5-0.3536) {$\mathcal{V}\!\left(\epsilon\right)$};   
  		
  \end{tikzpicture}

  \label{ptcloseseg}
\end{subfigure}
\hfill
\begin{subfigure}{.45\textwidth}
  \centering

  \begin{tikzpicture}    
  
 	\fill[gray!20]  (-0.3536, -0.3536)--(0-0.5,0+0.5)  -- (0-0.5,0+0.5)--(5-0.5,5+0.5) -- (5-0.5,5+0.5)--(5+0.3536, 5+0.3536)--(5+0.5,5-0.5)--(0+0.5,0-0.5)--cycle; 
  
  	\draw[ultra thin] (2-0.5, 2.25-0.5)-- (2+0.5, 2.25-0.5)-- (2+0.5, 2.25+0.5) -- (2-0.5, 2.25+0.5)--cycle;
  	\draw[ultra thin] (0,0)--(5,5);
  	\draw plot[mark=x, mark options={color=black, scale=2,  thick}] coordinates {(2, 2.25)};    	
  	\draw[ultra thick] (0+0.5,0-0.5)--(5+0.5,5-0.5);
  	\draw[ultra thick] (0-0.5,0+0.5)--(5-0.5,5+0.5);
  	\draw[ultra thick] (5-0.5,5+0.5)--(5+0.3536, 5+0.3536);  	  		
  	\draw[ultra thick] (5+0.5,5-0.5)--(5+0.3536, 5+0.3536);   	  		
  	\draw[ultra thick] (-0.3536, -0.3536)--(0+0.5,0-0.5);  
  	\draw[ultra thick] (-0.3536, -0.3536)--(0-0.5,0+0.5);
  	\draw[<->] (-0.3536+0.7,-0.3536-0.7)--(5+0.3536+0.7,5+0.3536-0.7);  	
  	\node [below right] at (2.5+0.7,2.5-0.7) {$\mathcal{V}\!\left(\epsilon\right)+\sqrt{2}\epsilon$};   
  	\draw[<->] (5+0.5+0.5,5+0.5-0.5)--(5+0.5-0.5,5+0.5+0.5);  		
  	\node [above right] at (5+0.4,5+0.4) {$\sqrt{2}\epsilon$};     	  	
  \end{tikzpicture}

  \label{neigseg}
\end{subfigure}

}
\end{center}
  \caption{On the left, a point in $\R^2$ lying $\epsilon$--close (with respect to the sup norm) to a line segment with length $\mathcal{V}\!\left(\epsilon\right)\ge 1$. On the right, the $\epsilon$--neighbourhood of the line segment is shaded in gray. The point under consideration lies in it. This neighbourhood contains a rectangle with side--lengths $\sqrt{2}\epsilon$ and $\mathcal{V}\!\left(\epsilon\right)$ and is contained in a rectangle with side--lengths $\sqrt{2}\epsilon$  and $\mathcal{V}\!\left(\epsilon\right)+\sqrt{2}\epsilon\le 2\mathcal{V}\!\left(\epsilon\right)$.}
  \label{neigfres}
\end{figure}
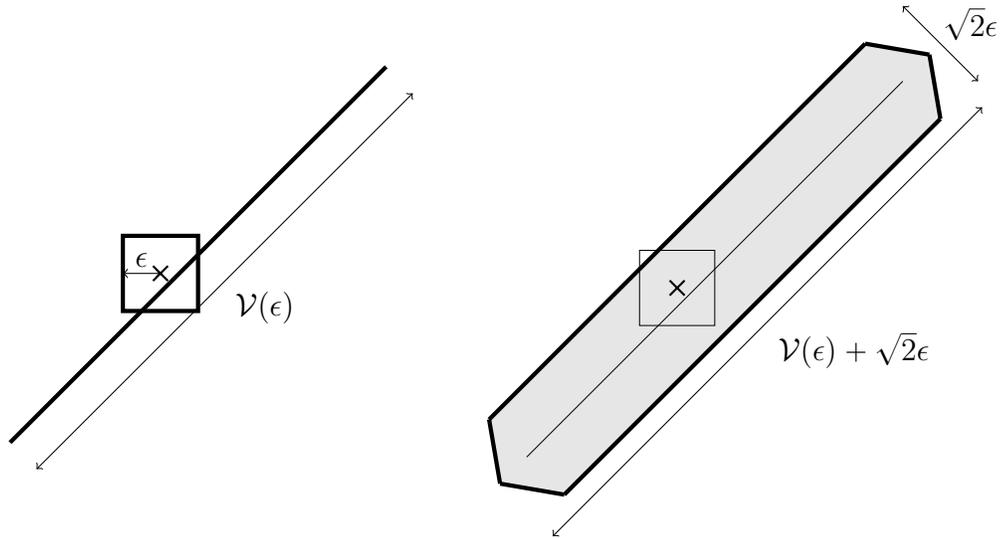

Upon rescaling the set $\mathfrak{F}$ and multiplying the visibility function by a constant depending only on the dimension, the construction of a dense forest with visibility $\mathcal{V}$ amounts to solving this question~: 

\begin{pb}[Construction of Dense Forests with a given Visibility Bound]\label{pbgoodviibility} Let $\mathcal{V}\; :\; \epsilon>0\mapsto\mathcal{V}\left(\epsilon\right)>0$ be such that $\mathcal{V}\left(\epsilon\right)\ge \epsilon^{-(n-1)}$ for all $\epsilon>0$ small enough. Does there exist a set of points $\mathfrak{F}\subset\R^n$ with finite density such that for any transformation $\tilde{g}$ that can be decomposed as
\begin{equation}\label{decomg'}
\tilde{g}=tk\tilde{a},\qquad \textrm{where} \qquad \tilde{a}\;=\;\textrm{\emph{Diag}}(\underbrace{\epsilon,\dots,\epsilon}_{n-1}, \mathcal{V}\left(\epsilon\right))\in\R^{n\times n},
\end{equation} 
where $k$ is an element of the special orthogonal group and where $t$ is a translation, the relation $$g'\left([0,1]^n\right)\cap\mathfrak{F}\;\neq\;\emptyset$$ holds for any $\epsilon>0$ small enough?
\end{pb}

Note that in the case that $\mathcal{V}\left(\epsilon\right)=O\left(\epsilon^{-(n-1)}\right)$ , Problem~\ref{pbgoodviibility} is equivalent to Problem~\ref{mainpbdenseforest}.\\

Denote by $\tilde{G}_n\!\left(\mathcal{V}\right)$ the set of all those transformations $g'$ that can be decomposed as in~\eqref{decomg'} and set furthermore $$\tilde{G}_n=\tilde{G}_n\!\left(\epsilon\mapsto\epsilon^{-(n-1)}\right).$$ Note that $\tilde{G}_n$ is a subset of the affine group $G_n$ defined in \S~\ref{equistatesec}, which is furthermore contained in the set $G'_n\subset G_n$ defined in \S~\ref{togosec}. Comparing the decomposition of an element $g'\in G'_n$ given in~\eqref{decomp} and that of an element $\tilde{g}\in\tilde{G}_n$ as above leads to the following observation on the relation between Problem~\ref{mainpbdenseforest}  and the Danzer Problem~: the concept of a dense forest with optimal visibility requires the consideration of a one--parameter subgroup of $SL_n\left(\R\right)$ made of all those diagonal matrices $\tilde{a}$ with positive entries, whereas the Danzer problem requires the consideration of all such diagonal matrices (which is an $(n-1)$--parameter subgroup). This restricts the set of boxes under  consideration and  allows for the relaxation of the volume constraint in Danzer's Problem by requiring that the one-parameter family should not necessarily be contained in $SL_n\left(\R\right)$ (as is done when dealing with the set $\tilde{G}_n\!\left(\mathcal{V}\right)$).

A Danzer set is necessarily a dense forest with optimal visibility since  it always holds that $\tilde{G}_n\subset G'_n$. The converse, however, does not hold except in the case $n=2$ as one then has that $\tilde{G}_2 = G'_2$. In other words, Danzer's Problem and the construction of a dense forest with optimal visibility are equivalent in the planar case $n=2$ (only)\footnote{This corrects an erroneous claim made both in~\cite[\S 4 \& \S 7.2]{SW} and in~\cite[p.1]{SS}~: a dense forest with optimal visibility is not necessarily a Danzer set when $n\ge 3$ as the set of boxes to be considered in the latter case is bigger than the set of those boxes needed to establish the dense forest property. In particular, the argument provided in the footnote of~\cite[p.1]{SS} is incorrect~: an $n$ dimensional box with volume $s>0$ and shortest edge length $2\epsilon$ has its longest edge length  equal to at least $s/(2\epsilon)^{n-1}$, and not $(s/(2\epsilon))^{1/(n-1)}$. This invalidates the claimed  proof. The author thanks Ioannis Tsokanos for pointing out to him this mistake.}.

\subsection{The State of the Art}\label{stateartdenseforest} In order to answer the above--stated question raised by Bishop, Peres~\cite[\S 2.3]{Bishop} constructed the first known example of a dense forest. This construction, restricted to the plane, provides a visibility bound of  $O\left(\epsilon^{-4}\right)$. This bound has subsequently been improved to $O\left(\epsilon^{-3}\right)$  by Adiceam, Solomon and Weiss~\cite[\S5]{ASW} thanks to the introduction of Diophantine ideas detailed in \S\ref{udtsec} below. Peres' construction has so far remained the main source of examples for explicit constructions of dense forests with effective bounds on the visibility. In \S\ref{peressec}, we provide a far--reaching generalisation of it by relating the visibility therein to a problem of distribution modulo one. 

Following Peres' work, Solomon and Weiss~\cite[Theorem 1.3]{SW} showed the existence of a uniformly discrete dense forest in any dimension. Their construction is deterministic and is based  on the concept of complete unique ergodicity introduced in their paper. However, no visibility bound is provided. This issue was later addressed by Adiceam~\cite{Adiceam}~: adapting Peres' construction to higher dimensions and relying on ideas from Fourier Analysis and on sharp estimates for exponential sums, he showed the existence of a (non uniformly discrete) dense forest in any dimension $n\ge 2$  with visibility bound $O\left(\epsilon^{-2(n-1)-\eta}\right)$ for any $\eta>0$. The corresponding forest depends on the probabilitic choice of a parameter.

To date, the best known visibility bound is due to Alon~\cite{Alon}, and is obtained for a forest construction of a purely probabilistic nature relying on  the Lov\'asz Local Lemma from Probability Theory (see~\cite[Chap.5]{AS} for a statement of this lemma)~:

\begin{thm}[Alon, 2018]\label{forestalon}
There exists a uniformly discrete dense forest in the plane with visibility bound $\epsilon^{-1+o(1)}$. More precisely, this forest admits the visibility function $$\mathcal{V}\left(\epsilon\right)=2^{C\sqrt{\log\left(\epsilon^{-1}\right)}}\epsilon^{-1}$$ for some $C>0$.
\end{thm}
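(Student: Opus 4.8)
The plan is to construct the forest as a union of randomly perturbed pieces of lattices at a geometric sequence of scales, and then to use the Lovász Local Lemma to show that the bad events (some short segment at some scale seeing through the forest) can be avoided simultaneously. First I would fix a parameter $R$ with $\log R \asymp \sqrt{\log(\epsilon^{-1})}$ and work with scales $\epsilon_j = R^{-j}$ ranging over roughly $\sqrt{\log(\epsilon^{-1})}$ consecutive values; at each scale $j$ the contribution to $\mathfrak{F}$ will be a union of $\epsilon_j^{-1}$-separated points obtained by taking a coarse grid of spacing $\asymp 1$ and, in each cell, placing a single point whose position is chosen uniformly at random (independently across cells and scales), subject to a rounding that keeps the whole set uniformly discrete. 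The point of using many scales is that a segment of a given length $L$ and a given direction only needs to be caught by the one scale $j$ for which $\epsilon_j$ is comparable to the "aspect ratio'' $L^{-1}$ relevant to that segment; one then only has to defeat, for each dyadic length class, the event that the $\epsilon_j$-tube around the segment contains no forest point at scale $j$.

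Second I would set up the discretisation of segments: by a standard net argument it suffices to control a finite (but large) collection of segments — an $\epsilon$-net in starting point and a $\delta$-net in direction, with $\delta \asymp \epsilon/\mathcal{V}(\epsilon)$ — so that catching every net segment $(2\epsilon)$-close forces every segment to be caught $\epsilon$-close. For a fixed net segment of length $\mathcal{V}(\epsilon)$, define the bad event $A_s$ that the segment comes no closer than $\epsilon$ to any point of $\mathfrak{F}$ at the one relevant scale. The probability of $A_s$ is the probability that, in each of the $\asymp \mathcal{V}(\epsilon)$ unit cells the segment traverses at that scale, the single random point of that cell avoids the thin $\epsilon \times 1$ sub-tube; each such miss has probability $1 - c\epsilon$ (the tube occupies an $\asymp \epsilon$ fraction of the cell), and since the cells are crossed transversally the choices are independent, giving $\Pr(A_s) \le (1-c\epsilon)^{c'\mathcal{V}(\epsilon)} = (1-c\epsilon)^{c'\epsilon^{-1}}$, which is exponentially small, of order $e^{-c''\epsilon^{-0+o(1)}}$ once we plug in the target $\mathcal{V}(\epsilon) = 2^{C\sqrt{\log(\epsilon^{-1})}}\epsilon^{-1}$ — in fact just $\mathcal{V}(\epsilon)\asymp\epsilon^{-1}\log(\epsilon^{-1})$ would already make $\Pr(A_s)$ polynomially small, which is the crude union-bound regime; the subpolynomial factor $2^{C\sqrt{\log(\epsilon^{-1})}}$ is precisely what the Local Lemma lets us shave off.

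Third, the Local Lemma step: I would verify the dependency structure, namely that $A_s$ is independent of all $A_{s'}$ whose net segment does not pass through any cell (at any used scale) met by $s$. The number of such ``neighbouring'' segments is at most the number of net segments whose starting point lies within $O(\mathcal{V}(\epsilon))$ of $s$ times the number of admissible directions, i.e. a bound of the shape $\mathcal{V}(\epsilon)^{2}\cdot \delta^{-1} \le \mathcal{V}(\epsilon)^{O(1)}$. The symmetric Lovász Local Lemma then applies provided $e\cdot p \cdot (d+1) \le 1$ with $p = \Pr(A_s)$ exponentially small and $d = \mathcal{V}(\epsilon)^{O(1)}$ merely polynomial in $\epsilon^{-1}$; comparing $e^{-c''\mathcal{V}(\epsilon)\epsilon}$ against $\mathcal{V}(\epsilon)^{O(1)}$ and solving $\mathcal{V}(\epsilon)\,\epsilon \gg \log \mathcal{V}(\epsilon)$ yields exactly $\mathcal{V}(\epsilon) = \epsilon^{-1}\cdot 2^{O(\sqrt{\log(\epsilon^{-1})})}$ as the threshold where the constraint becomes self-consistent (the $\sqrt{\log}$ appears because $\log d$ involves $\log(\text{number of scales})+\log(\text{size of the net})$, and balancing these two against each other under the relation $\log R\asymp\sqrt{\log(\epsilon^{-1})}$ is what produces the square root). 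Concluding, with positive probability no bad event occurs, so a realisation of $\mathfrak{F}$ is a uniformly discrete dense forest with the stated visibility function.

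The main obstacle I anticipate is the bookkeeping that makes the dependency degree $d$ genuinely polynomial rather than exponential in $\epsilon^{-1}$: one must choose the number of scales and the geometric ratio $R$ carefully so that (i) every segment of every relevant length is served by some scale, (ii) segments at very different positions or directions really are probabilistically independent because they share no random cell at any scale, and (iii) the total number of ``competing'' bad events seen by a fixed $A_s$ stays within the Local Lemma budget — and it is the simultaneous optimisation of these three requirements, under the exponential smallness of $p$, that forces the visibility to be $\epsilon^{-1}$ times a $2^{O(\sqrt{\log(\epsilon^{-1})})}$ factor rather than $\epsilon^{-1}$ times a single logarithm or than the clean $\epsilon^{-1}$ of Problem~\ref{mainpbdenseforest}.
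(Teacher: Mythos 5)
This survey does not actually prove Theorem~\ref{forestalon}: it only records the statement and attributes the construction, described as purely probabilistic and resting on the Lov\'asz Local Lemma, to \cite{Alon}. Your sketch therefore has to be judged on its own terms, and while its general flavour (one random point per grid cell, a net of segments, the symmetric Local Lemma) is indeed the right one, two load--bearing steps do not hold up.

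The first is the exponent. Your Local Lemma balance --- miss probability $(1-c\epsilon)^{c'\mathcal{V}(\epsilon)}$ against a dependency degree polynomial in $\epsilon^{-1}$ --- gives the condition $\epsilon\,\mathcal{V}(\epsilon)\gtrsim\log(\epsilon^{-1})$, i.e.\ the threshold $\mathcal{V}(\epsilon)\asymp\epsilon^{-1}\log(\epsilon^{-1})$, not $\epsilon^{-1}2^{C\sqrt{\log(\epsilon^{-1})}}$; and balancing ``$\log(\textrm{number of scales})$'' against ``$\log(\textrm{size of the net})$'' cannot produce a square root either, since the former is $O(\log\log(\epsilon^{-1}))$ and the latter $O(\log(\epsilon^{-1}))$. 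The factor $2^{C\sqrt{\log(\epsilon^{-1})}}$ is not what the Local Lemma ``shaves off'' at a fixed scale --- it is a \emph{loss}, paid for making a single set of finite density work for all $\epsilon$ simultaneously. That is the second and more serious gap: as written, your construction places one point in each unit cell at every scale $j$, so the union over the infinitely many scales needed as $\epsilon\to 0$ has infinite density and cannot be made uniformly discrete by any ``rounding''. One is forced to make the layer serving scale $\epsilon_j$ only $T_j$--separated with $\sum_j T_j^{-2}<\infty$, and to condition its points to avoid unit neighbourhoods of all earlier layers. This degrades the per--cell catching probability to about $\epsilon_j/T_j$ per $T_j\times T_j$ cell (hence a visibility loss of order $T_j^2$), ruins the clean independence across cells (the thin tube inside one cell can be entirely covered by the forbidden neighbourhoods of earlier points, so one must argue globally along the segment that a positive proportion of the tube survives), and couples the layers so that the dependency graph is no longer the one you describe. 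It is this bookkeeping --- growing separations, growing ratios between consecutive scales, interaction between layers --- that yields scales of the shape $\epsilon_j\approx 2^{-j^2}$ and hence the factor $2^{O(\sqrt{\log(\epsilon^{-1})})}$ (and, as noted after the theorem, Alon indicates that a sharper version of the same bookkeeping brings the bound down to $O\left(\epsilon^{-1}\log(\epsilon^{-1})\log\log(\epsilon^{-1})\right)$). In its present form your argument establishes, for each fixed $\epsilon$, a set with visibility $O\left(\epsilon^{-1}\log(\epsilon^{-1})\right)$ at that single scale, but not the existence of one uniformly discrete forest with the stated visibility function.
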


This visibility bound gets very close to the optimal $O\left(\epsilon^{-1}\right)$ from Problem~\ref{mainpbdenseforest} when $n=2$. In fact, it is claimed without proof at the end of~\cite{Alon}, on the one hand that the visibility estimate can further be improved to  $$\mathcal{V}\left(\epsilon\right)=O\left(\epsilon^{-1}\cdot\log\left(\epsilon^{-1}\right)\cdot\log\log\left(\epsilon^{-1}\right)\right),$$ and on the other that the construction can be generalised to any dimension $n\ge 3$ to provide a forest with similar visibility bound  of the form $O\left(\epsilon^{-(n-1)+o(1)}\right)$. The assumption of uniform discreteness relates Theorem~\ref{forestalon} to the Boshernitzan--Conway Problem~\ref{boshcon}~: it can be seen as a way to address this problem when relaxing the volume constraint. \\

Since Alon's construction is purely probabilistic in nature, one may formulate the following problem capturing all desirable properties of a dense forest~: 

\begin{pb}\label{pbeffecfor}
To construct a dense forest satisfying simultaneously these three pro\-per\-ties~: (1) the construction is deterministic; (2) the forest is uniformly discrete; (3) the forest admits a good visibility bound.
\end{pb}

To date, there is no known construction  satisfying these three conditions (for any visibility bound, however weak). A dense forest very nearly missing the deterministic requirement only (in some suitable sense) will nevertheless be described in \S~\ref{secgrid}. 

\subsection{In Search for Deterministic Constructions~: Peres' Forest and the Concept of Super--Uniformly Dense Sequences} \label{peressec} The goal in this section is to generalise Peres' above--mentioned construction. This will provide a class of dense forests which, although not uniformly discrete, will meet the other two conditions stated in Problem~\ref{pbeffecfor} assuming that  one is able to find a sequence satisfying some explicit distribution property modulo one.

\subsubsection{Peres' construction} We first start by briefly outlining Peres' idea from~\cite[\S 2.3]{Bishop} to address Bishop's forest question  (see also~\cite[\S 5]{ASW} for a related discussion). 

To begin with, assume that one is able to construct a planar forest, say $\mathfrak{F}_1$,  which is dense when considering only those line segments which are ``nearly horizontal'' in the sense that they are contained in lines with equation $$y\;=\; \alpha x +\beta \qquad \textrm{with} \qquad \left|\alpha\right|\le 1 \qquad \textrm{and}\qquad \beta\in\R.$$

Requiring that the visibility should be some function $\mathcal{V}$ thus means that for any $M\in\R$, any $\alpha\in [-1, 1]$, any  $\beta\in\R$ and any $\epsilon>0$, there exists $\left(a,b\right)\in\mathfrak{F}_1$ such that $$\max\left\{\left| M+x-a \right|, \left|\alpha (x+M)+\beta-b\right|\right\}\;<\;\epsilon\qquad\textrm{for some}\qquad 0\;\le\; x\;\le\; \frac{\mathcal{V}\left(\epsilon\right)}{\sqrt{1+\alpha^2}}\cdot$$ Since $\alpha^2\le 1$, this condition is fulfilled as soon as for any $M, \beta\in\R$, any $\alpha\in\R$ and any $\epsilon>0$,  there exists $\left(a,b\right)\in\mathfrak{F}_1$ such that  
\begin{equation}\label{condivisiperes}
\max\left\{\left| M+x-a \right|, \left|\alpha x+\beta-b\right|\right\}\;<\;\epsilon\qquad\textrm{for some}\qquad 0\;\le\; x\;\le\; \frac{\mathcal{V}\left(\epsilon\right)}{\sqrt{2}}\cdot
\end{equation}

Define at present $\mathfrak{F}_2$ as the point set obtained by rotating $\mathfrak{F}_1$ by an angle $\pi/2$. Clearly, $\mathfrak{F}_2$ will be a dense forest with visibility $\mathcal{V}$ for all those line segments  which are``nearly vertical'' in the sense that they are contained in lines with equation $$y\;=\; \alpha' x +\beta' \qquad \textrm{with} \qquad \left|\alpha'\right|\ge 1\qquad \textrm{and}\qquad \beta'\in\R.$$

The point set 
\begin{equation}\label{peresfoerest}
\mathfrak{F}\;=\; \mathfrak{F}_1\cup\mathfrak{F}_2
\end{equation} 
then becomes a dense forest with visibility $\mathcal{V}$. Peres' choice reduces to taking 
\begin{equation}\label{f_1peres}
\mathfrak{F}_1\;=\;\Z^2\cup \begin{pmatrix}  1 &
  0 \\ \varphi & 1\end{pmatrix}\cdot \Z^2 \;=\; \bigcup_{\left(k,l\right)\in\Z^2}\left\{\left(k, l\right), \left(k, \varphi k +l\right)\right\},
\end{equation}
where $\varphi= \left(1+\sqrt{5}\right)/2$ is the Golden Ratio. The resulting construction is represented in Figure~\ref{peresforest}.

\begin{figure}[h!]
\begin{center}
\includegraphics[scale=1]{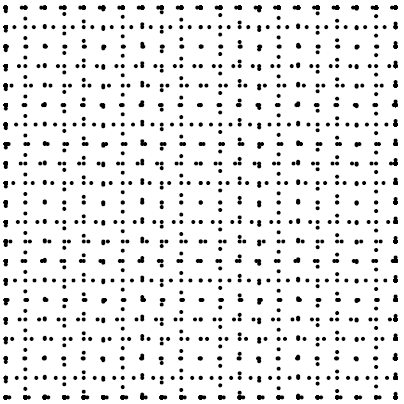}
\end{center}
\caption{Peres' dense forest $\mathfrak{F}$ is the union of two subsets~: the first one (corresponding to points lying on vertical lines) is a dense forest for those line segments with direction close to the horizontal; the second one (corresponding to points lying on horizontal lines) is a dense forest for those line segments with direction close to the vertical.}
\label{peresforest}
\end{figure}

Peres then obtains the visibility bound $\mathcal{V}\left(\epsilon\right)=O\left(\epsilon^{-4}\right)$ by showing that~\eqref{condivisiperes} holds for some $a=k\in\Z$ and some $x$ of the form $x=k-M$. This easily follows from Dirichlet's Theorem in Diophantine approximation (see~\cite[Lemma 2.4]{Bishop}). 

For the generalisation of this construction we have in mind, it must be noted that at the heart of the proof of the visibility bound $O\left(\epsilon^{-4}\right)$ lies the following  property~: the sequence $\left(b_k\right)_{k\ge 0}$ defined for all $k\ge 0$ by setting 
\begin{equation}\label{defbk}
b_{2k}=0 \qquad \textrm{ and }\qquad b_{2k+1}=\varphi k
\end{equation}
is such that for some constant $C>0$, for any integer $m\ge 0$, any $\xi\in\R$ and any $\epsilon>0$, the finite set of fractional parts $\left\{\left\{b_{k+m}-\xi(k+m)\right\}\; :\; 0 \le k\le C\cdot\epsilon^{-4}\right\}$ is $\epsilon$--dense in the unit interval $[0,1]$. This is saying that this finite set of points intersects any open interval with length $\epsilon$ contained in $[0,1]$.

\subsubsection{The Concept of Super--Uniform Density} In order to capture the generality behind Peres' construction, we consider sequences satisfying similar distribution properties in any dimension. To this end,  first recall that the \emph{dispersion} of a finite set of points $\bm{U}_N=\left\{ \bm{u}_0, \dots, \bm{u}_{N-1}\right\}$ lying in the unit hypercube $$I_n=[0, 1]^n$$ is defined as the quantity 
\begin{equation}\label{defdisp}
\delta\left(\bm{U}_N\right)\;=\; \max_{\bm{x}\in I_n}\; \min_{0\le  k\le N-1}\; \left\|\bm{u}_k-\bm{x}\right\|_{\infty}.
\end{equation}
It holds that an infinite sequence $\bm{U}=\left(\bm{u}_k\right)_{k\ge 0}$ in $I_n$ is dense in $I_n$ if, and only if, $$\lim_{N\rightarrow\infty} \delta\left(\bm{U}_N\right)\; =\; 0 $$ (see~\cite[Theorem 1.16]{DT} for a proof).

The concept of dispersion is not as easy to deal with as that of \emph{discrepancy}. Recall that the discrepancy of the above defined finite set of points $\bm{U}_N$ is the quantity
\begin{equation*}
\Delta\left(\bm{U}_N\right)\;=\; \sup_{J \subset I_n}\; \left|\frac{\#\left\{0\le k\le N-1\; :\; \bm{u}_k\in J\right\}}{N}-\lambda_n\left(J\right)\right|,
\end{equation*}
where the supremum is taken over all aligned boxes $J$ contained in $I_n$ and where $\lambda_n$ denotes the $n$--dimensional Lebesgue measure restricted to $I_n$. The sequence $\bm{U}$ is then equidistributed if, and only if,  $$\lim_{N\rightarrow\infty} \Delta\left(\bm{U}_N\right)\; =\; 0 $$ (see~\cite[Chap.1]{DT} for further details on the concept of equidistribution). \\

Dispersion and discrepancy are related as follows (see~\cite[p.12]{DT})~: for any $N\ge 0$,
\begin{equation}\label{reldispdiscr}
\frac{1}{2\left(N^{1/n}+1\right)}\;\le\;\delta\left(\bm{U}_N\right)\;\le\; \frac{\sqrt[n]{\Delta\left(\bm{U}_N\right)}}{2}\cdotp
\end{equation}
This relation implies in particular the natural claim that an equidistributed sequence is dense. However, these two concepts are rather different in nature~: equidistribution depends on the choice of a measure whereas density is a property dependent on the choice of a distance (here induced by the sup norm).  Correspondingly, the discrepancy and the dispersion of a finite set of points tend to behave very differently~: the latter can only decrease by the adjunction of a new point whereas the behaviour of the former depends very much on the position of this new point  with respect to the previous ones. Therefore, inequality~\eqref{reldispdiscr} should be seen as a crude estimate for the dispersion. It is nevertheless very useful inasmuch as a number of tools are available to estimate the discrepancy of a given sequence, e.g., the analytic methods resulting from Fourier Analysis and the Theory of Exponential Sums through Weyl's Equidistribution Criterion~\cite[Theorem 1.19]{DT} and the Erd\H{o}s--Tur\'an Inequality~\cite[Theorem 1.21]{DT}. Such an analytic approach is at the heart of the construction in~\cite{Adiceam} of a dense forest with visibility $O\left(\epsilon^{-2(n-1)-\eta}\right)$ for any $\eta>0$ in dimension $n\ge 2$.\\

To generalise  Peres' construction, we introduce two concepts. In the definition below, given $\bm{x}\in\R^n$, we denote by $\left\{\bm{x}\right\}$ the $n$--dimensional vector all of whose components are the fractional parts of the corresponding components of the vector $\bm{x}$.

\begin{defi}\label{defsud}
Let $\bm{V}=\left(\bm{v}_k\right)_{k\ge 0}$ be a sequence in $\R^n$. 

The sequence $\bm{V}$ is \emph{super uniformly dense modulo one} if, for any integer $m\ge 0$ and any $\bm{\xi}\in\R^n$, the sequence of fractional parts 
\begin{equation}\label{deffracpartsuxim}
\bm{U}\!\left(m, \bm{\xi}\right)=\left(\left\{\bm{v}_{k+m}-\bm{\xi}(k+m)\right\}\right)_{k\ge 0}
\end{equation}
 is dense in $I_n$ uniformly in $m$ and $\bm{\xi}$. Denoting by  $\bm{U}_N\!\left(m, \bm{\xi}\right)$ the first $N$ terms of the sequence $\bm{U}\!\left(m, \bm{\xi}\right)$, this is saying that 
 \begin{equation}\label{defdeltahat}
 \hat{\delta}_{\bm{V}}(N)\: :=\:\sup_{m\ge 0}\;\sup_{\bm{\xi}\in\R^n}\;\delta\left(\bm{U}_N\!\left(m, \bm{\xi}\right)\right)\;\longrightarrow\; 0 \qquad \textrm{as}\qquad N\rightarrow\infty.
 \end{equation}

The sequence $\bm{V}$ is \emph{super uniformly equidistributed modulo one} if, for any integer $m\ge 0$ and any $\bm{\xi}\in\R^n$, the sequence of fractional parts~\eqref{deffracpartsuxim} is equidistributed in $I_n$ uniformly in $m$ and $\bm{\xi}$. This is saying that 
$$\hat{\Delta}_{\bm{V}}(N)\: :=\:\sup_{m\ge 0}\;\sup_{\bm{\xi}\in\R^n}\;\Delta\left(\bm{U}_N\!\left(m, \bm{\xi}\right)\right)\;\longrightarrow\; 0 \qquad \textrm{as}\qquad N\rightarrow\infty.$$
\end{defi}

From~\eqref{reldispdiscr}, a super uniformly equidistributed sequence modulo one is also super uniformly dense. The latter concept seems to be new whereas the former encapsulates two well-studied ones. 

On the one hand,  super uniform equidistribution implies that the sequence $\bm{V}$ under consideration has, after reduction modulo one, an \emph{empty spectrum}. In the notation of the above definition, this means that for all $\bm{\xi}\in\R^n$, $$\Delta\left(\bm{U}_N\!\left(0, \bm{\xi}\right)\right)\;\longrightarrow\; 0 \qquad \textrm{as}\qquad N\rightarrow\infty;$$ that is, that for all $\bm{\xi}\in\R^n$, the sequence of fractional parts $\left(\left\{\bm{v}_{k}-\bm{\xi}k\right\}\right)_{k\ge 0}$ is equidistributed (if this sequence is not equidistibuted for some $\bm{\xi}\in\R^n$, the vector $\bm{\xi}$ is said to lie in the spectrum of the sequence $\bm{V}$). The spectral property of sequences were extensively studied in the 1970's, in particular from the point of view of numeration systems. The reader is referred to the bibliography mentioned in~\cite[p.2068]{BBLT} and to~\cite[\S 1.4.3]{DT} for further details on the topic. Here, we content ourselves with indicating that the spectrum of \emph{any} sequence has always zero Hausdorff dimension, and that almost all sequences in the interval $[0,1)$ have empty spectrum (with respect to the Haar measure on the infinite torus $\left(\R\backslash\Z\right)^{\N}$ after identifying $[0,1)$ with  $\R\backslash\Z$ in the natural way). See~\cite[\S 4]{DMF} for a proof of these claims.

On the other hand, super uniform equidistribution implies that the sequence $\bm{V}$ is \emph{well--distributed} after reduction modulo one. This means that $$\sup_{m\ge 0}\;\Delta\left(\bm{U}_N\!\left(m, \bm{0}\right)\right)\;\longrightarrow\; 0 \qquad \textrm{as}\qquad N\rightarrow\infty;$$ that is, that the sequence $\left(\left\{\bm{v}_{k+m}\right\}\right)_{k\ge 0}$ is equidistributed uniformly in $m\ge 0$. This property is not generic anymore~: indeed, it follows from~\cite[Theorem 3.8]{KN} for instance that almost no sequence in the interval $[0,1]$ is well--distributed. The interested reader is also referred to the nice survey~\cite[\S 3]{BM} for further properties of well--distributed sequences.\\

The motivation behind the above definition is the following theorem, the proof of which will be given in \S\ref{proofperes}~:

\begin{thm}[Visibility in Generalised Peres' Forests]\label{uniquethm} Assume that the sequence $\bm{V}=\left(\bm{v}_k \right)_{k\ge 0}$ taking values in $\R^{n-1}$ is super uniformly dense. Then, there exists a dense forest in $\R^n$ with visibility 
\begin{equation}\label{bornevsiiperes}
\mathcal{V}\left(\epsilon\right)\;=\; O\left(f_{\bm{V}}\left(\epsilon \right)\right), \qquad \textrm{where} \qquad f_{\bm{V}}\left(\epsilon \right)=\min\left\{N\ge 0\; :\; \hat{\delta}_{\bm{V}}\left(N\right)<\epsilon\right\}
\end{equation} 
and where $\hat{\delta}_{\bm{V}}$ is the measure of super uniform dispersion defined in~\eqref{defdeltahat}. 

Furthermore, the construction of such a forest is explicit provided that  the sequence $\bm{V}$ is deterministic.
\end{thm}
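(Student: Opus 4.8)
The plan is to carry over, essentially verbatim, the structure of Peres' argument sketched in \S\ref{peressec}, replacing the ad hoc sequence $(b_k)$ of~\eqref{defbk} by the given super uniformly dense sequence $\bm{V}$. The point is that the distribution property isolated just after~\eqref{defbk} is, in arbitrary dimension, \emph{exactly} the content of Definition~\ref{defsud}, while the bound $\mathcal{V}(\epsilon)=O(f_{\bm{V}}(\epsilon))$ merely records how large a window of indices one must use for the associated super uniform dispersion $\hat{\delta}_{\bm{V}}$ of~\eqref{defdeltahat} to drop below $\epsilon$.

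First I would build the elementary block. From $\bm{V}=(\bm{v}_k)_{k\ge 0}$ in $\R^{n-1}$, set
$$\mathfrak{F}_1 \;=\; \bigcup_{k\ge 0}\Bigl(\{k\}\times\bigl(\bm{v}_k+\Z^{n-1}\bigr)\Bigr)\;\cup\;\bigcup_{k\ge 0}\Bigl(\{-k\}\times\bigl(\bm{v}_k+\Z^{n-1}\bigr)\Bigr)\;\subset\;\R^n,$$
so that on each integer hyperplane $x_1=k$ one places a translate of the unit lattice $\Z^{n-1}$, the translate being prescribed by $\bm{v}_{|k|}$ (the reflection through $x_1=0$ makes $\mathfrak{F}_1$ two--sided, which is needed to reach line segments lying in the region $x_1<0$). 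There are $O(T)$ relevant hyperplanes meeting $B_2(\bm{0},T)$, each carrying $O(T^{n-1})$ points, whence $\#(\mathfrak{F}_1\cap B_2(\bm{0},T))=O(T^n)$. Next I would cover $\Sph^{n-1}$ by the $n$ closed caps $C_i=\{\bm{\alpha}\in\Sph^{n-1}\,:\,|\alpha_i|\ge|\alpha_j|\text{ for all }j\}$, let $\sigma_i$ be the coordinate transposition exchanging axes $1$ and $i$, and put $\mathfrak{F}_i=\sigma_i(\mathfrak{F}_1)$ and $\mathfrak{F}=\bigcup_{i=1}^n\mathfrak{F}_i$. This $\mathfrak{F}$ still has finite density, and it suffices to show that $\mathfrak{F}_1$ comes $\epsilon$--close to every segment of length $\mathcal{V}(\epsilon)=c_n\, f_{\bm{V}}(\epsilon)$, for a dimensional constant $c_n$ chosen large enough, whose direction lies in $C_1$; the cases of the remaining caps follow by applying the $\sigma_i$, and the caps exhaust all directions.

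The core is a counting argument on the integer hyperplanes crossed by a segment. Fix a segment $\{\bm{\beta}+s\bm{\alpha}\,:\,0\le s\le L\}$ with $L=\mathcal{V}(\epsilon)$ and $\bm{\alpha}\in C_1$; reversing it if necessary I may assume $\alpha_1>0$, hence $\alpha_1\ge 1/\sqrt{n}$. Its orthogonal projection onto the $x_1$--axis is an interval of length $\ge L/\sqrt{n}$, so it contains at least $\lfloor L/\sqrt{n}\rfloor$ consecutive integers, at least half of which lie to one side of $0$; taking $c_n$ large one thus obtains a run $\{m,m+1,\dots,m+N-1\}$ with $m\ge 0$ and $N\ge f_{\bm{V}}(\epsilon)$ (when the chosen run is negative one first conjugates everything by $x_1\mapsto -x_1$, which only replaces the slope vector $\bm{\xi}$ below by $-\bm{\xi}$). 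Writing the transverse coordinates of the segment as an affine function of $x_1$ gives vectors $\bm{\xi},\bm{w}\in\R^{n-1}$ (the slopes $\alpha_j/\alpha_1$ and the intercepts) such that the point of the segment on the hyperplane $x_1=k$ equals $(k,\,\bm{\xi}k+\bm{w})$, and it corresponds to a parameter $s=(k-\beta_1)/\alpha_1\in[0,L]$, for every $k$ in the run. A point $(k,\bm{v}_k+\bm{l})\in\mathfrak{F}_1$ then lies within $\epsilon$ of the segment in the sup norm precisely when $k$ is in the run and $\bm{l}\in\Z^{n-1}$ can be chosen with $\|\bm{v}_k-\bm{\xi}k-\bm{w}+\bm{l}\|_{\infty}<\epsilon$, that is, when $\{\bm{v}_k-\bm{\xi}k\}$ lies within $\epsilon$ of $\{\bm{w}\}$ in $I_{n-1}$. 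But the finite set $\{\,\{\bm{v}_k-\bm{\xi}k\}\,:\,m\le k\le m+N-1\,\}$ is exactly $\bm{U}_N(m,\bm{\xi})$ from~\eqref{deffracpartsuxim}, so its dispersion is at most $\hat{\delta}_{\bm{V}}(N)$, which is $<\epsilon$ because $N\ge f_{\bm{V}}(\epsilon)$ and $f_{\bm{V}}$ is defined in~\eqref{bornevsiiperes} as the least such $N$. Hence $\bm{U}_N(m,\bm{\xi})$ is $\epsilon$--dense in $I_{n-1}$, so some member of it is within $\epsilon$ of $\{\bm{w}\}$; this produces the required $k$ and a point of $\mathfrak{F}_1$ within $\epsilon$ of the segment, which establishes $\mathcal{V}(\epsilon)=O(f_{\bm{V}}(\epsilon))$ in the sense of~\eqref{defdenseforest}. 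The explicitness clause is then immediate, since no step uses $\bm{V}$ other than through its values.

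The analytic difficulty is entirely hidden in the hypothesis: Definition~\ref{defsud} is tailored so that the last reduction above is trivial, so the genuine work is the geometric bookkeeping of the third paragraph — verifying that a direction in the cap $C_1$ forces the segment to cross $\Omega(L)$ integer hyperplanes $x_1\in\Z$ while keeping $x_1\ge 0$ along a fixed fraction of them, and that the affine reparametrisation converts ``within $\epsilon$ of the segment in sup norm'' into exactly the fractional--part condition measured by $\hat{\delta}_{\bm{V}}$. The one genuinely fiddly point, which is what dictates the two--sided definition of $\mathfrak{F}_1$ and the size of the dimensional constant $c_n$, is the sign discussion for segments that live in $\{x_1<0\}$ or straddle the hyperplane $x_1=0$.
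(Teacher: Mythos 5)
Your proposal is correct and takes essentially the same route as the paper: the same two--sided set $\mathfrak{F}_1$ of translated lattices on the hyperplanes $x_1=k$ with $\bm{v}_{-k}=\bm{v}_k$, the same splitting of directions according to which coordinate of $\bm{\alpha}$ realises $\|\bm{\alpha}\|_{\infty}$ (the paper uses rotations $\mathcal{R}_j$ where you use transpositions, which is immaterial), and the same reduction of $\epsilon$--closeness to the statement that the dispersion of $\bm{U}_N(m,\bm{\xi})$ is below $\epsilon$ once $N\ge f_{\bm{V}}(\epsilon)$, including the same sign/reflection trick for runs of negative integers.
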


Note that as the dispersion function $\hat{\delta}_{\bm{V}}$ gets closer to its best possible lower bound given by the left--hand side inequality in~\eqref{reldispdiscr} (the exponent to be considered in this context is $1/(n-1)$), the visibility function in~\eqref{bornevsiiperes} gets closer to the optimal bound stated in Problem~\ref{mainpbdenseforest}.\\

As an application of Theorem~\ref{uniquethm}, let $\bm{V}$ be the one-dimensional sequence $\left(\alpha k^2\right)_{k\ge 0}$, where $\alpha$ is a badly approximable number (this means that $\left|q\right|\cdot\left|q\alpha -p\right|>c$ for some $c>0$ and all integer vectors $\left(p,q\right)\neq\left(0,0\right)$). It is an elementary exercise, combining the Erd\"os--Tur\'an Inequality~\cite[Theorem 1.21]{DT} and the known sharp estimates for quadratic exponential sums~\cite[Theorem~6]{FJK}, to show that, in this case, regardless of the choice of $m\ge 0$ and of $\xi\in \R$, it holds that $$\Delta\left(\bm{U}_N\!\left(m, \bm{\xi}\right)\right)\; =\; O\left(\frac{1}{\sqrt[3]{N}}\right)$$  (the implicit constant depends only on $\alpha$). Theorem~\ref{uniquethm} and the right--hand side ine\-qua\-li\-ty in~\eqref{reldispdiscr} (where $n=1$) then imply the existence of an explicit planar  dense forest with visibility $O\left(\epsilon^{-3}\right)$.\\

The following problem naturally follows from the statement of Theorem~\ref{uniquethm}~:

\begin{pb} \label{pbsudgen}
Does there exist a sequence $\bm{V}=\left(\bm{v}_k\right)_{k\ge 0}$ in $\R^d$ ($d\ge 1$) such that its measure of super uniform dispersion $\hat{\delta}_{\bm{V}}$ satisfies the relation $\hat{\delta}_{\bm{V}}\left(N\right)=O\left(N^{-1/d}\right)$? More generally, what is the best decay rate that can be obtained for the quantity $\hat{\delta}_{\bm{V}}$ as $N$ tends to infinity?
\end{pb}

Theorem~\ref{udtborelcantelli} in \S\ref{udtsec} implies the existence of a probabilistic sequence $\bm{V}$ in $\R^d$ achieving the bound $\hat{\delta}_{\bm{V}}\left(N\right)=O\left(N^{-1/d+\eta}\right)$ for any $\eta>0$. In order to construct deterministic dense forests with good visibility bounds, the following pro\-blem is also of interest~:

\begin{pb}\label{pbexplicitsud}
What is the best decay rate that can be obtained for the measure of super uniform dispersion $\hat{\delta}_{\bm{V}}$ assuming that the $d$--dimensional sequence $\bm{V}=\left(\bm{v}_k\right)_{k\ge 0}$ is deterministic?
\end{pb}

\sloppy The best known result in this direction in the case $d=1$ is due to Tsokanos~\cite{T} and results from the construction of a dyadic sequence~:

\begin{thm}[Tsokanos, 2020+]\label{tsok}
There exists a deterministic super uniformly dense sequence $\bm{V}$ in $\R$ such that for any $\eta>0$, $\hat{\delta}_{\bm{V}}(N)=O\left(N^{-1/2+\eta}\right)$. As a consequence, one can construct an explicit dense forest in the plane with visibility $O\left(\epsilon^{-2-\eta}\right)$ for any $\eta>0$.
\end{thm}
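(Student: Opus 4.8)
The plan is to build an explicit \emph{dyadic} sequence $\bm{V}=(v_k)_{k\ge 0}$ in $[0,1)$, to establish the dispersion bound $\hat{\delta}_{\bm V}(N)=O(N^{-1/2+\eta})$ for it, and then to feed this into Theorem~\ref{uniquethm}. The conceptual starting point is that, in view of~\eqref{reldispdiscr}, controlling $\hat{\delta}_{\bm V}$ \emph{through} a discrepancy estimate is wasteful: a polynomial sequence such as $(\alpha k^2)$ has discrepancy no better than $O(N^{-1/3})$ uniformly in a linear twist — exactly what caps the resulting forest at visibility $O(\epsilon^{-3})$ — so one should instead design a sequence whose \emph{dispersion} is tracked directly. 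Concretely, I would partition $\N$ into consecutive blocks $B_0,B_1,\dots$ with $\#B_r\asymp 2^r$ and prescribe $(v_k)_{k\in B_r}$ as a structured multiscale net: $B_r$ is subdivided into dyadic sub-blocks of all lengths $2^i$ up to about $2^{r/2}$, on each of which the terms $v_k$ run through the equally spaced points $j2^{-i}$, $0\le j<2^i$, ordered by digit reversal. The essential refinement over a plain van der Corput sequence — and the generalisation of Peres's use of the Golden Ratio — is that the various sub-blocks carry \emph{complementary additive shifts}: copies translated by $\tfrac12,\tfrac14,\dots$, together with, along a subsequence of positive lower density, copies translated by multiples of a fixed badly approximable $\theta$. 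These shifts exist solely to prevent the dyadic resonances $\xi\in 2^{-s}\Z$ from collapsing $\{v_k-\xi k\}$ onto a proper subset of $[0,1)$.

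With such a $\bm V$ in hand, by the definition~\eqref{defdeltahat} of $\hat{\delta}_{\bm V}$ it suffices to show that there is $C>0$ such that, for every $m\ge 0$, every $\xi\in\R$ and every large $N$, the $N$ points $\left\{v_{k+m}-\xi(k+m)\right\}$, $0\le k<N$, are $CN^{-1/2+\eta}$-dense in $[0,1]$; after a shift of the index this concerns a window $W=[m,m+N)$. I would then run a dichotomy on the twist $\xi$ governed by Dirichlet's theorem at scale $\sqrt N$, writing $|\xi-p/q|\le (q\sqrt N)^{-1}$ with $1\le q\le\sqrt N$. When $q$ is small, the digit-reversal part of $\bm V$ is indeed annihilated by the twist, and one leans instead on the complementary shifts built into $\bm V$ at the dyadic scale matching $q$ (and on the irrational shift $\theta$ when $q$ is a power of two) to guarantee that $\left\{v_k-\xi k:k\in W\right\}$ still meets every interval of length $\asymp N^{-1/2+\eta}$. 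When $\xi$ is only well approximated by denominators $q\asymp\sqrt N$, the arithmetic progression $\left\{\xi k\right\}$ is itself $O(N^{-1/2})$-dense on any sub-block of $W$ of length $\asymp\sqrt N$ by the three-distance theorem, and combining this with the $\asymp\sqrt N$ disjoint sub-blocks of $W$ on which $\bm V$ supplies an $\asymp N^{-1/2}$-net of base points again yields $O(N^{-1/2+\eta})$-density. Optimising the cutoff between the two regimes gives $\hat{\delta}_{\bm V}(N)=O(N^{-1/2+\eta})$, a square root away from the trivial lower bound $\hat{\delta}_{\bm V}(N)\ge (2(N+1))^{-1}$ of~\eqref{reldispdiscr}.

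It remains to assemble. Since $\hat{\delta}_{\bm V}(N)\le CN^{-1/2+\eta}$, the function $f_{\bm V}$ appearing in~\eqref{bornevsiiperes} satisfies $f_{\bm V}(\epsilon)=O(\epsilon^{-2/(1-2\eta)})=O(\epsilon^{-2-\eta'})$ with $\eta'=\eta'(\eta)\to 0$ as $\eta\to 0$; because $\bm V$ is deterministic, Theorem~\ref{uniquethm} then produces an \emph{explicit} dense forest in $\R^2$ with visibility $\mathcal{V}(\epsilon)=O(\epsilon^{-2-\eta'})$, and letting $\eta\to 0$ yields the announced family of visibility bounds, which also provides the best known deterministic answer to Problem~\ref{pbexplicitsud} in dimension $d=1$.

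The main obstacle is precisely the uniformity over all twists $\xi$, and within it the resonant values of $\xi$. A base-$2$ van der Corput sequence is super uniformly dense with $\hat{\delta}\asymp N^{-1}$ \emph{in the absence of a twist}, yet collapses completely when $\xi=1/2$, since the even- and odd-indexed terms then coincide modulo one and cover only half of $[0,1)$; the whole difficulty is to weave the complementary-shift and irrational-shift structure into $\bm V$ so as to defeat every dyadic resonance at once without spoiling the $N^{-1/2}$ rate, while carefully accounting for the $\eta$-losses that accumulate over the $\log N$ dyadic scales present in a window. The persistence of the exponent $1/2$ rather than the optimal $1$ reflects exactly this tension between handling all linear twists simultaneously and attaining the dispersion of a single good net — the same gap that separates the probabilistic bound behind Problem~\ref{pbsudgen} from what is expected in Problem~\ref{pbexplicitsud}.
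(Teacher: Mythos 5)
Your reduction of the second assertion to the first is correct and is exactly what the paper does: once a deterministic $\bm V$ with $\hat{\delta}_{\bm V}(N)=O(N^{-1/2+\eta})$ is in hand, $f_{\bm V}(\epsilon)=O(\epsilon^{-2/(1-2\eta)})$ and Theorem~\ref{uniquethm} delivers the explicit planar forest with visibility $O(\epsilon^{-2-\eta'})$. You have also correctly located the crux (uniformity over all twists $\xi$ and all starting indices $m$, with the dyadic resonances as the enemy) and correctly observed that one must control the dispersion directly rather than through discrepancy. But the first assertion --- the existence of a concrete deterministic sequence achieving the bound --- is not actually proved. In the small-denominator regime your argument reads, in substance, ``the complementary shifts built into $\bm V$ at the dyadic scale matching $q$ guarantee density''; no such guarantee is verified for any specific sequence, and your closing paragraph concedes that weaving the shifts together ``so as to defeat every dyadic resonance at once without spoiling the $N^{-1/2}$ rate'' is the whole difficulty. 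That is precisely the content of the theorem, so what you have is a design brief rather than a proof.

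There is also a concrete structural worry with the construction you sketch. Arranging the data in consecutive blocks $B_r$ of length $\asymp 2^r$, subdivided into sub-blocks each carrying one net and one shift, does not obviously give the required uniformity in $m$: a window of length $N\ll 2^{r}$ sitting deep inside $B_r$ may meet only $O(1)$ sub-blocks at the scale relevant to a given resonance $\xi\in 2^{-s}\Z$, hence see only $O(1)$ of the $2^{s}$ shifts needed to fill $[0,1)$ at that scale. Tsokanos's sequence (recorded after the theorem in the paper, with the proof deferred to~\cite{T}) is built differently so as to avoid exactly this: the scale-$i$ data are spread along the residue class $n\equiv 2^{i-1}-2\ \ (\mathrm{mod}\ 2^i)$, so \emph{every} window of length $N$ automatically contains $\asymp N/2^i$ terms of the $i$-th subsequence for each $2^i\lesssim N$; and within each scale the values are not a shifted van der Corput net but the bilinear family $v= rs/2^{2i^2+4}$ (plus a dyadic shift depending on the parity of $r$), so that after twisting by $\xi$ one gets, for each $r$, an arithmetic progression in $s$ with step $r/2^{2i^2+4}-\xi$, and the argument consists in showing that some admissible $r$ yields a step that is not too well approximable by small denominators. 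Your large-denominator regime (Dirichlet at scale $\sqrt N$ plus three-distance) is fine, but to complete the proof you would need to replace the assertion in the resonant regime by an actual construction-plus-verification of this kind.
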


The sequence $\bm{V}=\left(v_n\right)_{n\ge 1}$ in this statement is defined as follows~: decompose the integer $n\ge 1$ as $n=k\cdot 2^i+2^{i-1}-2$ with $i\ge 1$ and $k\ge 0$, and the integer $k$ as $k=r\cdot 2^{i^2+2}+s$ with $0\le r\le 2^{i^2+2}-1$ and $1\le s\le 2^{i^2+2}$. Then, $$v_n\;=\; \begin{cases}
\frac{rs}{2^{(2i^2+4)}} & \textrm{if } r \textrm{ is odd,} \\
\frac{rs}{2^{(2i^2+4)}} + \frac{s}{2^{(i^2+4)}} & \textrm{if } r \textrm{ is even}.
\end{cases}$$

The resulting forest visibility bound is actually slightly sharper than the one appearing in Theorem~\ref{tsok} as it is of the form $O\left(\epsilon^{-2}\cdot 2^{C\sqrt{-\log(\epsilon)}}\right)$ for some $C>0$. 

\subsubsection{Dense Forests Constructed from Super Uniformly Dense Sequences.}\label{proofperes} The goal of this section is to prove Theorem~\ref{uniquethm}. The construction of the dense forest described here generalises ideas appearing in~\cite{Adiceam}, \cite[\S 5]{Adiceambis} and~\cite[\S 2.3]{Bishop}. The main substance of the proof is contained in this lemma~:

\begin{lem}\label{lemf1}
Let $\bm{V}=\left(\bm{v}_k\right)_{k\ge 0}$ be a sequence in $\R^{n-1}$ satisfying the assumptions of Theorem~\ref{uniquethm}. Extend it to a sequence indexed by $\Z$ by setting $\bm{v}_k=\bm{v}_{-k}$ if $k$ is a strictly negative integer. Define furthermore $$\mathfrak{F}_1\; =\; \bigcup_{\left(k, \bm{l}\right)\in\Z\times\Z^{n-1}}\left\{\left(k, \bm{v}_k+ \bm{l}\right)\right\}\;\subset\; \R^n.$$ 
Then, $\mathfrak{F}_1$ is a dense forest with visibility $O\left( f_{\bm{V}}\left(\epsilon\right)\right)$ for those line segments contained in lines ``close to the $x_1$--axis''; more precisely, for those line segments contained in lines of the form 
\begin{equation}\label{linerstric}
\left\{\bm{\alpha}t+\bm{\beta}\right\}_{t\in\R},\quad \textrm{where}\quad  \bm{\beta}\in\R^n\quad \textrm{and}\quad \bm{\alpha}=\left(\alpha_1, \dots, \alpha_n\right)\in\mathbb{S}^{n-1}\quad  \textrm{with}\quad \left\|\bm{\alpha}\right\|_{\infty}=\left|\alpha_1\right|.
\end{equation}
\end{lem}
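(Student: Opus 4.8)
The plan is to reduce the visibility statement for $\mathfrak{F}_1$ directly to the super uniform density hypothesis on $\bm{V}$, following the scheme already present in Peres' argument (and its generalisations in~\cite{Adiceam, Adiceambis, Bishop}) but recording the estimates in the normalised language of $\hat{\delta}_{\bm{V}}$. Fix $\epsilon>0$ small, a base point $\bm{\beta}=(\beta_1,\dots,\beta_n)\in\R^n$ and a direction $\bm{\alpha}=(\alpha_1,\dots,\alpha_n)\in\Sph^{n-1}$ with $\|\bm{\alpha}\|_\infty=|\alpha_1|$; by symmetry we may take $\alpha_1>0$, and since $|\alpha_1|\ge n^{-1/2}$ (it is the largest coordinate of a unit vector) the first coordinate $x_1$ is a legitimate parameter along the line. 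Reparametrising the line $\{\bm{\alpha}t+\bm{\beta}\}_t$ by $x_1$, a point of the line with first coordinate $x_1$ has the form $(x_1,\,\bm{\xi} x_1+\bm{\gamma})$ for a fixed slope vector $\bm{\xi}=(\alpha_2,\dots,\alpha_n)/\alpha_1\in\R^{n-1}$ and a fixed offset $\bm{\gamma}\in\R^{n-1}$ determined by $\bm{\beta}$. First I would restrict attention to candidate forest points of $\mathfrak{F}_1$ whose first coordinate is an integer $k$: such a point is $(k,\,\bm{v}_k+\bm{l})$ with $\bm{l}\in\Z^{n-1}$, and it lies within sup-distance $\epsilon$ of the line provided $|k-x_1|<\epsilon$ for some admissible $x_1$ (which I will arrange by taking $x_1=k$ exactly, so the first coordinate matches on the nose) and, in the remaining $n-1$ coordinates, $\|\bm{v}_k+\bm{l}-(\bm{\xi}k+\bm{\gamma})\|_\infty<\epsilon$ for a suitable $\bm{l}\in\Z^{n-1}$. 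The latter is exactly the condition that $\{\bm{v}_k-\bm{\xi}k-\bm{\gamma}\}$ lies within $\epsilon$ (in sup norm, modulo $\Z^{n-1}$) of $\bm{0}$, i.e. of $\{\bm{0}\}$.

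Next I would run the parameter $t$, equivalently $x_1$, over an interval of length $\mathcal V(\epsilon)$ starting at $\beta_1$, and note that the integers $k$ with $|k-x_1|<\epsilon$ for some $x_1$ in this interval form a block of $N$ consecutive integers with $N\asymp\mathcal V(\epsilon)$; write this block as $\{m, m+1,\dots,m+N-1\}$ for the appropriate starting integer $m$ (which depends on $\beta_1$, hence on the line). For the line segment to come $\epsilon$-close to $\mathfrak{F}_1$ it therefore suffices that the finite set of fractional parts $\{\{\bm{v}_{k}-\bm{\xi} k-\bm{\gamma}\}:\,m\le k\le m+N-1\}$ be $\epsilon$-dense in $I_{n-1}$ — in fact we only need it to meet the $\epsilon$-ball around $\{\bm{0}\}$, but $\epsilon$-density is the clean sufficient condition and it is precisely what $\hat\delta_{\bm V}$ controls. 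Here I must absorb the additive constant $\bm{\gamma}$: since translating by a fixed vector is an isometry of $I_{n-1}$ preserving dispersion, the dispersion of $\{\{\bm{v}_{k}-\bm{\xi} k-\bm{\gamma}\}\}_{k=m}^{m+N-1}$ equals that of $\{\{\bm{v}_{k}-\bm{\xi} k\}\}_{k=m}^{m+N-1}$, which is $\delta(\bm U_N(m,\bm\xi))$ and hence at most $\hat\delta_{\bm V}(N)$ by~\eqref{defdeltahat}. (The extension $\bm v_k=\bm v_{-k}$ for negative $k$ just ensures the sequence — and the definition~\eqref{deffracpartsuxim} — makes sense for the block $\{m,\dots,m+N-1\}$ when $m$ is negative, which happens when $\beta_1<0$; super uniform density of the two-sided sequence follows from that of the one-sided one since each two-sided window is contained in a one-sided window of comparable, in fact at most doubled, length, changing $f_{\bm V}$ only by a bounded factor.) Thus if $N\ge f_{\bm V}(\epsilon)$, i.e. $\hat\delta_{\bm V}(N)<\epsilon$, the required point exists, and since $N\asymp\mathcal V(\epsilon)$ this gives the visibility bound $\mathcal V(\epsilon)=O(f_{\bm V}(\epsilon))$. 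I would also check the trivial but necessary point that $\mathfrak{F}_1$ has finite density: it is a finite union (over $k\bmod 1$ — here a single residue class, the integers) of translates of the lattice $\{0\}\times\Z^{n-1}$ placed on the hyperplanes $x_1\in\Z$, so $\#(\mathfrak F_1\cap B_2(\bm 0,T))=O(T^n)$.

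The main obstacle, and the place where care is needed, is the bookkeeping that turns "$x_1$ ranges over a segment of length $\mathcal V(\epsilon)$" into "$k$ ranges over $N\asymp\mathcal V(\epsilon)$ consecutive integers, independently of the line": one has to verify that the starting index $m$ and the slope $\bm\xi$ that arise are genuinely arbitrary (so that the suprema over $m$ and $\bm\xi$ in the definition of $\hat\delta_{\bm V}$ are exactly what is needed and no more), and that the constants lost in passing between the true Euclidean length of the segment, its first-coordinate extent $\alpha_1\cdot(\text{length})$, and the integer count $N$ are all bounded in terms of $n$ alone — using $n^{-1/2}\le|\alpha_1|\le 1$. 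A secondary subtlety is the reduction from the two-sided sequence back to the one-sided one in~\eqref{defdeltahat}; as indicated above this only costs a factor of $2$ in $N$, hence a harmless constant in the visibility function, but it should be stated explicitly. None of these steps is deep; the content is entirely carried by the hypothesis that $\bm V$ is super uniformly dense, and the lemma is essentially a translation exercise between that hypothesis and the geometric definition~\eqref{defdenseforest} of a dense forest restricted to the directions~\eqref{linerstric}.
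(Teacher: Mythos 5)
Your argument is correct and follows essentially the same route as the paper's proof: you restrict to forest points with integer first coordinate, reparametrise the line by $x_1$ so that the closeness condition becomes the $\epsilon$--density of the fractional parts $\left\{\bm{v}_{k+m}-\bm{\xi}(k+m)\right\}$ over a block of $N\asymp\mathcal{V}\left(\epsilon\right)$ consecutive indices controlled by $\hat{\delta}_{\bm{V}}(N)$, and you handle negative indices via the reflection $\bm{v}_{-k}=\bm{v}_k$ at the cost of a factor $2$, exactly as the paper does. The one imprecise phrase is that ``each two-sided window is contained in a one-sided window'': a window straddling $0$ is not contained in a one-sided one, but it contains a one-sided half of length at least $N/2$ (after reflecting the negative half, which negates $\bm{\xi}$ and is absorbed by the supremum over $\bm{\xi}$), which is precisely the reduction carried out in the paper.
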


\begin{proof}
It is easy to check that $\mathfrak{F}_1$ is uniformly discrete. With the notation introduced in Theorem~\ref{uniquethm}, the goal is thus to show the existence of a constant $C>0$ such that for any $\epsilon>0$ and any $t_0\in\R$, there should exist
\begin{equation}\label{condforet}
\tau\in \left[t_0,\;  t_0+C\cdot f_{\bm{V}}\left(\epsilon\right)\right] \quad\textrm{for which} \quad \left\|\bm{\alpha}\tau+\bm{\beta}-\bm{f}_1\right\|_{\infty}<\epsilon \quad\textrm{for some}\quad \bm{f}_1\in\mathfrak{F}_1.
\end{equation} 
To this end, restrict $\tau$ to be of the form $\tau=(p-\beta_1)/\alpha_1$, where $p$ is an integer and $\bm{\beta}=\left(\beta_1, \bm{\beta}'\right)$ with $\bm{\beta}'\in\R^{n-1}$. Decomposing the vector $\bm{\alpha}\in\R^n$ as $\bm{\alpha}=\left(\alpha_1, \bm{\alpha}'\right)$ with $\bm{\alpha}'=\left(\alpha_2, \dots, \alpha_n\right)\in\R^{n-1}$ and setting $\bm{f}_1=\left(p, \bm{v}_p+\bm{l}\right)$ for some $\bm{l}\in\Z^{n-1}$, condition~\eqref{condforet} becomes $$\left\|p\bm{\xi}+\bm{\delta}-\bm{v}_k+\bm{l}\right\|_{\infty}<\epsilon,$$ where $\bm{\xi}=\bm{\alpha}'/\alpha_1$ and $\bm{\delta}=\bm{\beta}'-\left(\beta_1/\alpha_1\right)\cdot\bm{\alpha}'$. Here, the first requirement in~\eqref{condforet} translates into $$\beta_1+\alpha_1t_0<p<\beta_1+\alpha_1t_0+\alpha_1C\cdot  f_{\bm{V}}\left(\epsilon\right)\qquad \textrm{if}\qquad \alpha_1>0$$ and $$\beta_1+\alpha_1t_0>p>\beta_1+\alpha_1t_0+\alpha_1C\cdot  f_{\bm{V}}\left(\epsilon\right)\qquad \textrm{if}\qquad \alpha_1<0.$$

Under the restriction imposed in~\eqref{linerstric}, it holds that $1/\sqrt{n-1}\le\left|\alpha_1\right|\le 1$. Set $p=k+m$ with $m$ the least integer bigger than $\beta_1+\alpha_1t_0$ if $\alpha_1>0$ and $m$ the least integer bigger than $\beta_1+\alpha_1t_0+\alpha_1C\cdot  f_{\bm{V}}\left(\epsilon\right)$ if $\alpha_1<0$. It is then enough to establish the existence of a constant $C>0$ such that  for any $\epsilon>0$, any $m\in\Z$ and any $\bm{\xi}, \bm{\delta}\in\R^{n-1}$ , it holds that 
\begin{equation}\label{presqconclu}
\left\|\left\{\bm{v}_{k+m}+\bm{\xi}(k+m)-\bm{\delta}\right\}\right\|_{\infty}<\epsilon\quad\textrm{for some}\quad 1\le k\le C\cdot f_{\bm{V}}\left(\epsilon\right).
\end{equation}
Here, at least $ C\cdot f_{\bm{V}}\left(\epsilon\right)/2$ of the integers of the form $k+m$ as above are either non--negative or  else strictly negative. In the former case, it is enough to show that~\eqref{presqconclu} holds for some $k$ such that $0\le m+(C\cdot f_{\bm{V}}+1)/2\le k\le C\cdot f_{\bm{V}}$. In the latter case, since  $\bm{v}_{-k}=\bm{v}_k$ for all $k\ge 0$, the same reduction can be made. This shows that it suffices to establish that~\eqref{presqconclu} holds under the additonal assumption that $k\ge 0$ and that $m\ge 0$ (upon redefining the integer $m$ as $\pm\left(m+(C\cdot f_{\bm{V}}+1)/2\right)$ and the value of the constant $C$ as $C/2$).

Recalling the definition of dispersion in~\eqref{defdisp}, assumption~\eqref{defdeltahat} implies the existence of a smallest integer $N=N\!\left(\epsilon\right)\ge 0$ depending only on $\epsilon$ such that~\eqref{presqconclu} is valid uniformly in the choice of the integer $m\ge 0$ and of the vectors $\bm{\xi}, \bm{\delta}\in\R^{n-1}$. This integer $N$ is by definition equal to $ f_{\bm{V}}\left(\epsilon\right)$. Upon ensuring that  $C\ge 1$, the proof is therefore complete.
\end{proof}

In order to conclude  the proof of Theorem~\ref{uniquethm},  define the set $\mathfrak{F}$ as the union 
\begin{equation}\label{forestunion}
\mathfrak{F}\;=\; \bigcup_{j=1}^{n}\mathfrak{F}_j.
\end{equation} 
Here, $\mathfrak{F}_j$ is obtained from the set $\mathfrak{F}_1$ defined in Lemma~\ref{lemf1} by applying to it the rotation $\mathcal{R}_j$ which brings the $x_1$--axis onto the $x_j$--axis in the $(x_1, x_j)$--plane and leaves the orthogonal of this plane invariant (in particular, $\mathcal{R}_1$ is the identity). 

\begin{proof}[Completion of the proof of Theorem~\ref{uniquethm}] The set $\mathfrak{F}$ has finite density as a finite union of sets with finite density\footnote{In fact, it is not hard to see that $\mathfrak{F}$ has  finite upper uniform density in the sense that~\eqref{defupperfinitedensity} holds.}. It is furthermore a dense forest with visibility $O\left( f_{\bm{V}}\left(\epsilon\right)\right)$. Indeed, let $\Lambda$ be a line  segment parametrised as 
\begin{equation*}
\Lambda\;=\; \left\{\bm{\alpha}t+\bm{\beta}\;:\; t_0<t<t_0+C\cdot f_{\bm{V}}\left(\epsilon\right)\right\}
\end{equation*}
for some $t_0\in\R$ and some $C>0$. Here, $\bm{\beta}\in\R^n$ and 
\begin{equation*}
\bm{\alpha}=\left(\alpha_1, \dots, \alpha_n\right)\in\mathbb{S}^{n-1}\quad  \textrm{with}\quad \left\|\bm{\alpha}\right\|_{\infty}=\left|\alpha_j\right|
\end{equation*}
for some $1\le j\le n$. Then,  the  line  segment $\mathcal{R}^{-1}_j\left(\Lambda\right)$ is contained in a line of the form~\eqref{linerstric}. For some suitable value of $C>0$, Lemma~\ref{lemf1} thus implies that it gets $\epsilon$--close to a point in $\mathfrak{F}_1$. This is precisely saying that, for this value of $C>0$,  $\Lambda$ gets $\epsilon$--close to a point in $\mathfrak{F}_j$. 
This concludes the proof of Theorem~\ref{uniquethm}.
\end{proof}

\subsection{On Uniformly Diophantine Sets of Vectors}\label{udtsec} The best known result towards Problem~\ref{pbsudgen} is due to Adiceam, Solomon and Weiss, although the correspon\-ding theorem, namely~\cite[Theorem 5.3]{ASW}, is not stated in the language of super uniform density.  It relies on the newly introduced concept of sets of vectors which are Uniformly Diophantine and provides, as detailed below, a probabilistic sequence $\bm{V}$ in dimension $\R^{n-1}$ with super uniform dispersion $\hat{\delta}_{\bm{V}}\left(N\right)=O\left(N^{-1/{(n-1)}+\eta}\right)$ for any $\eta>0$. The purpose of this section is to shed light on this new concept by showing how it fits in a more general circle of ideas that can be used to define sequences which are super uniformly dense.\\

Cover the unit cube $I_d$ into $s\ge 1$ subsets, say $\mathcal{C}_1, \dots, \mathcal{C}_s$. One thus has the (non--necessarily disjoint) union 
\begin{equation}\label{partition}
[0,1]^d\;=\; \bigcup_{i=1}^s\mathcal{C}_i.
 \end{equation}
 Assume that for each set $\mathcal{C}_i$ ($1\le i\le s)$, one can find a sequence $\bm{A}^{(i)}=\left(\bm{a}^{(i)}_k\right)_{k\ge 1}$ in $\R^d$ which is super uniformy dense when the spectral part is restricted to $\mathcal{C}_i$; that is, such that, in the notation of Definition~\ref{defsud},  
 \begin{equation}\label{condnew}
 \sup_{m\ge 0}\;\sup_{\bm{\xi}\in\mathcal{C}_i}\;\delta\left(\bm{A}^{(i)}_N\!\left(m, \bm{\xi}\right)\right)\;\longrightarrow\; 0 \qquad \textrm{as}\qquad N\rightarrow\infty.
 \end{equation}

Let then $\bm{B}=\left(\bm{b}_k\right)_{k\ge 1}$ be the $d$--dimensional sequence obtained by concatenating the sequences $\bm{A}^{(i)}$ ($1\le i\le s$) in the following way~: given $k\ge 1$, set 
\begin{equation}\label{seqB}
\bm{b}_k\;=\; \bm{a}^{(i)}_{js+i}\qquad \textrm{if}\qquad k=js+i;  
\end{equation} 
that is, $$\left(\bm{b}_k\right)_{k\ge 1}\;=\;\left(\bm{a}^{(1)}_{1}, \bm{a}^{(2)}_{1}, \dots   , \bm{a}^{(s)}_{1},\;\; \bm{a}^{(1)}_{2}, \bm{a}^{(2)}_{2}, \; \dots\; , \bm{a}^{(s)}_{2}, \; \dots\right).$$
Since the terms of a given  subsequence  $\bm{A}^{(i)}$ have bounded gaps within the sequence $\bm{B}$, it is easy to check that $\bm{B}$ is super  uniformly dense; that is, that
 \begin{equation*}
\hat{\delta}_{\bm{B}}(N)\;=\; \max_{1\le i\le s}\; \sup_{m\ge 0}\;\sup_{\bm{\xi}\in\mathcal{C}_i}\;\delta\left(\bm{A}^{(i)}_N\!\left(m, \bm{\xi}\right)\right)\;\longrightarrow\; 0 \qquad \textrm{as}\qquad N\rightarrow\infty.
 \end{equation*}

This strategy is implicitly implemented in~\cite[\S 5]{ASW} by choosing $\bm{\theta}_1, \dots, \bm{\theta}_s\in\R^d$ such that~\eqref{partition} holds with 
\begin{equation}\label{defCi}
\mathcal{C}_i\;=\;\left\{\bm{\xi}\in [0,1]^d\; :\; \textrm{the finite sequence }\left(\left\{\left(\bm{\theta}_i-\bm{\xi}\right)\cdot k\right\}\right)_{1\le k\le \mathcal{V}\left(\epsilon\right)} \textrm{ is }\epsilon\textrm{--dense in }I_d\right\}.
\end{equation}
The sequence $\bm{B}$ in~\eqref{seqB} is then defined by setting 
\begin{equation}\label{defai}
\bm{a}^{(i)}_k\; =\; \frac{\bm{\theta}_i}{s}\cdot k
\end{equation} 
for all $k\ge 1$. (The normalising factor $1/s$ just comes from the fact that when seeing $\bm{A}^{(i)}$ as a subsequence of $\bm{B}$, definition~\eqref{seqB} implies that the specific integers $k$ to be considered will contain an additional multiplicative factor  $s$.)  In~\eqref{defCi}, $\mathcal{V}$ is a function to be determined for the sets $\mathcal{C}_i$ ($1\le i\le s$) to be well--defined.  It is not hard to see that the existence of such a  function guarantees that~\eqref{condnew} holds, and furthermore that, in the notation of Theorem~\ref{uniquethm},
\begin{equation}\label{bornvisifv}
f_{\bm{B}}(\epsilon)\;\le \; s\cdot\mathcal{V}\!\left(\epsilon\right).
 \end{equation}

Note that when $d=1$, $\theta_1=0$ and $\theta_2=\varphi$ (the Golden Ratio), the sequence $\bm{B}$ thus obtained  reduces precisely to the sequence~\eqref{defbk} appearing in Peres' planar construction. The point to consider linear sequences is that they behave well under the two conditions imposed by the property of super uniform density~: on the one hand, adding a linear shift (as is required when considering the spectrum) preserves the linearity property; on the other, and most importantly, the dispersion of a linear sequence does not depend on the starting index (in the sense that, if $\bm{A}=\left(\bm{a}_k\right)_{k\ge 0}$ is linear, then, with the notation introduced in~\eqref{defdisp} and~\eqref{deffracpartsuxim} , $\delta\left(\bm{A}_N\right)=\delta\left(\bm{A}_N(m, \bm{0}\right)$ for all $m, N\ge 0$).\\

In order to show that the sets $\mathcal{C}_i$'s defined in~\eqref{defCi} cover the interval $I_d$ for some function $\mathcal{V}$ and some vectors $\bm{\theta}_1, \dots, \bm{\theta}_s\in\R^d$ to be determined, the following definition is introduced in~\cite[\S 5]{ASW}. Therein, the scalar product between two vectors $\bm{x}, \bm{y}\in\R^d$ is denoted by $\bm{x\cdot y}$ and $$\left\|\bm{x}\right\|_{\Z^d}\; :=\;\min_{\bm{l}\in\Z^d}\;\left\|\bm{x}-\bm{l}\right\|_{\infty}.$$

\begin{defi}\label{defudt}
Let $\Phi$ be a non--increasing function such that $\Phi(T)\rightarrow 0$ as $T\rightarrow\infty$. A set of $s$ vectors $\bm{\theta}_1, \dots, \bm{\theta}_s$ in $\R^d$ is \emph{Uniformly Diophantine with type} $\Phi$ if   the following holds~: for all $\bm{\xi}\in\R^d$ and all $T\ge 1$, there exists an index $i\in\llbracket 1, s \rrbracket$ such that for all non-zero  $\bm{u}\in\Z^d$ with sup norm at most $T$, one has 
\begin{equation}\label{inedudt}
\left\|\bm{u\cdot}\left(\bm{\xi}-\bm{\theta}_i\right)\right\|_{\Z^d}\;\ge\; \Phi(T).
\end{equation}
\end{defi}

The following is established in the proof of~\cite[Theorem 5.2]{ASW}~: if $\bm{\theta}_1, \dots, \bm{\theta}_s\in\R^d$ is a set of $s$ uniformly Diophantine vectors with type $\Phi$, then~\eqref{partition}  holds provided that 
\begin{equation}\label{defvisiudt}
\mathcal{V}\left(\epsilon\right)\;=\; O\left(\left(\epsilon^{d-1}\cdot\Phi\left(d\epsilon^{-1}\right)^{-1}\right)^d\right).
\end{equation}
 From the above discussion, the sequence $\bm{B}$ obtained from~\eqref{defai} is then super uniformly dense, and inequality~\eqref{bornvisifv} shows that  the corresponding generalised Peres' forest constructed in the proof of Theorem~\ref{uniquethm}  has visibility~\eqref{defvisiudt}. Upon setting 
 \begin{equation}\label{dimrel}
 n=d+1
\end{equation}
and representing vectors as columns, this forest $\mathfrak{F}\subset\R^n$ can be explicitly described as follows~: it is the union defined in~\eqref{forestunion}, where 
\begin{equation}\label{f_1peresgen}
\mathfrak{F}_1\;=\;\bigcup_{i=1}^{s} \begin{pmatrix}  1 &
  \bm{0}^T \\ \bm{\theta}_i & \textrm{Id}_d\end{pmatrix}\cdot \Z^n
  \end{equation} 
with $\textrm{Id}_d$ the $d\times d$ identity matrix (compare with the special case~\eqref{f_1peres}).
  
It should be noted here that, from Dirichlet's Theorem in Diophantine Approximation (see~\cite[Theorem VI]{Casselsbis}), the left--hand side in~\eqref{inedudt} is always less than $T^{-d}$ for some non zero integer vector $\bm{u}$ with sup norm at most $T$. Therefore, $$\Phi(T)\le T^{-d},$$ and the closer the function $\Phi$ gets to this upper bound, the closer the visibility estimate~\eqref{defvisiudt} gets to the optimal bound $O\left(\epsilon^{-(n-1)}\right)$ stated in Problem~\ref{mainpbdenseforest} (recall here~\eqref{dimrel}). \\
  
A sufficient condition, based on a reformulation of Definition~\ref{defudt} in terms of ap\-pro\-xi\-mation of a point by rational subspaces, is provided in~\cite[Proposition 7.1]{ASW} to establish that a set of vectors  is \emph{not} uniformly Diophantine of a given type. In the case of two numbers $\theta_1, \theta_2$ (that is, when $s=2$ and $n=d+1=2$), this condition can be refuted for a suitable type $\Phi$ if the difference $\theta_2-\theta_1$ is badly approximable. Setting $\theta_1=0$ and $\theta_2=\varphi$, this leads one to the improvement for the visibility in Peres' planar forest mentioned in \S\ref{stateartdenseforest}, namely from $O\left(\epsilon^{-4}\right)$ to $O\left(\epsilon^{-3}\right)$ (see~\cite[\S 7.1]{ASW} for details).

In the general case however, this sufficient condition does not reduce to any known concept in Diophantine Approximation. It is used in~\cite[Theorem]{ASW} to establish the existence of Uniformly Diophantine sets of vectors of a given type in full generality~: 

\begin{thm}[Adiceam, Solomon \& Weiss, 2020+]\label{udtborelcantelli}
Let $\Phi~: T\mapsto \Phi(T)$ be non--increasing and tending to zero at infinity. Assume that $s\ge d+1$ and that  
\begin{equation}\label{series}
\sum_{m=1}^{\infty}2^{md(s+1)}\Phi\left(2^m\right)^{s-d}\;<\;\infty.
\end{equation}
Then, the $s$ columns of almost all $d\times s$ real matrices determine a set of Uniformly Diophantine $d$--dimensional vectors with type $O\left(\Phi\right)$.
\end{thm}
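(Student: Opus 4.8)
The plan is to run a Borel--Cantelli argument over the space of $d\times s$ real matrices $M$, whose columns are written $\bm{\theta}_1(M),\dots,\bm{\theta}_s(M)$. Since being uniformly Diophantine of type $\Phi$ is, by Definition~\ref{defudt}, a condition that must hold for all $\bm{\xi}\in\R^d$ and all $T\ge 1$, the first step is to discretise: it suffices to verify the inequality~\eqref{inedudt} along a dyadic sequence of scales $T=2^m$ and, at each scale, for $\bm{\xi}$ ranging over a sufficiently fine finite net of $[0,1)^d$ (by periodicity only $\bm{\xi}\in[0,1)^d$ matters), at the cost of weakening $\Phi$ by a bounded multiplicative factor and using that $\Phi$ is non-increasing so that control at $2^m$ gives control for all $T\in[2^{m-1},2^m]$. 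Thus the failure set at scale $m$ is, up to a bounded factor,
$$
\mathcal{B}_m\;=\;\left\{M\;:\;\exists\,\bm{\xi}\in\mathcal{N}_m\ \text{such that}\ \forall i\in\llbracket 1,s\rrbracket\ \exists\,\bm{u}\in\Z^d\setminus\{\bm 0\},\ \|\bm{u}\|_\infty\le 2^m,\ \|\bm{u}\cdot(\bm{\xi}-\bm{\theta}_i(M))\|_{\Z^d}<\Phi(2^m)\right\},
$$
where $\mathcal{N}_m$ is a net of cardinality $O(2^{md}\Phi(2^m)^{-d})$ or so (fine enough that moving $\bm{\xi}$ within a cell perturbs $\bm{u}\cdot\bm{\xi}$ by at most, say, $\Phi(2^m)/2$ for all $\|\bm{u}\|_\infty\le 2^m$). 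The goal is then to show $\sum_m |\mathcal{B}_m|<\infty$ (Lebesgue measure on a fixed large cube of matrices), whence almost every $M$ lies in only finitely many $\mathcal{B}_m$, and a final rescaling/tail argument upgrades ``type $C\Phi$ for $T\ge T_0$'' to ``type $O(\Phi)$''.

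\textbf{Estimating the measure of $\mathcal{B}_m$.} The columns $\bm{\theta}_i$ are independent (each uniform on a fixed cube), so for fixed $\bm{\xi}$ and fixed choices of integer vectors $\bm{u}_1,\dots,\bm{u}_s$ the events ``$\|\bm{u}_i\cdot(\bm{\xi}-\bm{\theta}_i)\|_{\Z^d}<\Phi(2^m)$'' for $i=1,\dots,s$ are independent. Each such event has probability $O(\Phi(2^m))$: indeed, for a fixed nonzero $\bm{u}\in\Z^d$, the map $\bm{\theta}\mapsto \bm{u}\cdot\bm{\theta}\bmod 1$ pushes Lebesgue measure on the cube to a bounded density on $\R/\Z$, so the preimage of an interval of length $2\Phi(2^m)$ has measure $O(\Phi(2^m))$. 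Hence for fixed $\bm{\xi}$ and fixed $(\bm{u}_1,\dots,\bm{u}_s)$ the joint probability is $O(\Phi(2^m)^s)$. Now union-bound: there are $|\mathcal{N}_m|=O(2^{md}\Phi(2^m)^{-d})$ choices of $\bm{\xi}$, and at each $\bm{\xi}$ at most $(O(2^m))^{d}$ choices for \emph{each} $\bm{u}_i$; but crucially we do not have to pay the full $s$-fold product of integer-vector counts, because the structure is ``for all $i$ there exists $\bm{u}_i$'' — for a \emph{fixed} tuple $(\bm{u}_1,\dots,\bm{u}_s)$ the probability is $O(\Phi(2^m)^s)$, and the number of tuples is $(O(2^{md}))^s$. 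This gives the crude bound $|\mathcal{B}_m|\le |\mathcal{N}_m|\cdot O(2^{mds})\cdot O(\Phi(2^m)^s)=O\!\left(2^{md}\Phi(2^m)^{-d}\cdot 2^{mds}\cdot \Phi(2^m)^s\right)=O\!\left(2^{md(s+1)}\Phi(2^m)^{s-d}\right)$, which is exactly the general term of the convergent series~\eqref{series}. Summing over $m$ and invoking Borel--Cantelli finishes the argument; the hypothesis $s\ge d+1$ is what makes the exponent $s-d\ge 1$ on $\Phi$ positive so that small $\Phi$ actually helps.

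\textbf{Where the difficulty lies.} The routine parts are the dyadic reduction, the netting of $\bm{\xi}$, and the single-event probability bound $O(\Phi(2^m))$. The delicate point — and the place where one must be careful to land exactly on the exponents in~\eqref{series} rather than something worse — is the bookkeeping in the union bound: one must exploit the quantifier alternation ``$\exists\bm{\xi}\ \forall i\ \exists\bm{u}_i$'' so that the $s$ integer-vector sums enter only once each (giving $2^{mds}$) while the independence of the columns converts the conjunction over $i$ into the product $\Phi(2^m)^s$, and the net-cardinality factor $2^{md}\Phi(2^m)^{-d}$ must be matched against the saved $\Phi$-power. A subtlety is that the net $\mathcal{N}_m$ must be fine enough that a displacement of $\bm{\xi}$ within one cell cannot by itself satisfy the near-integrality condition for some $\|\bm u\|_\infty\le 2^m$ — this forces the $\Phi(2^m)^{-d}$ factor in $|\mathcal{N}_m|$ and is exactly why the final exponent on $\Phi$ is $s-d$ and not $s$. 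Handling the boundary scales $T\in[2^{m-1},2^m]$ uniformly (so that almost-sure control at dyadic $T$ really yields the continuous statement with type $O(\Phi)$) and confirming that the implied constants can be absorbed into the $O(\cdot)$ in the conclusion is then straightforward monotonicity.
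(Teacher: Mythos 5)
The survey does not actually prove Theorem~\ref{udtborelcantelli}: it is quoted from the preprint [ASW], and the surrounding text indicates that the argument there passes through the reformulation of Definition~\ref{defudt} in terms of approximation by rational subspaces ([ASW, Proposition~7.1]) before the measure estimate, rather than working directly from the definition as you do. There is therefore no in-paper proof to compare against; judged on its own terms, your direct Borel--Cantelli argument is sound, and the bookkeeping lands exactly on~\eqref{series}: a net for $\bm{\xi}$ of cardinality $O\left(2^{md}\Phi(2^m)^{-d}\right)$ (periodicity in $\bm{\xi}$ reducing to $[0,1)^d$), the count $O\left(2^{mds}\right)$ of tuples $(\bm{u}_1,\dots,\bm{u}_s)$, and the probability $O\left(\Phi(2^m)^s\right)$ per tuple coming from the independence of the columns together with the fact that $\bm{\theta}\mapsto\bm{u}\cdot\bm{\theta}\bmod 1$ pushes Lebesgue measure on a cube to a bounded density. (Your aside that one ``does not pay the full $s$-fold product of integer-vector counts'' is misleading --- the factor $\left(O(2^{md})\right)^s$ is precisely that product --- but the displayed bound is correct.)

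The one step I would not dismiss as ``straightforward monotonicity'' is the passage from dyadic scales to general $T\in[2^{m-1},2^m]$. The a.s.~statement at scale $2^m$ with threshold $\Phi(2^m)$ gives the lower bound $\Phi(2^m)/2$, whereas the definition requires $c\,\Phi(T)$ with $\Phi(T)\ge\Phi(2^m)$ (since $T\le 2^m$ and $\Phi$ is non-increasing), and these are comparable only under a doubling condition $\Phi(2^m)\gg\Phi(2^{m-1})$ that is not assumed. The fix is harmless but should be made explicit: define the bad event at scale $2^m$ with threshold $\Phi(2^{m-1})$; the resulting series is a reindexing of~\eqref{series} up to the constant $2^{d(s+1)}$, and then $\Phi(2^{m-1})\ge\Phi(T)$ for all $T\ge 2^{m-1}$, as required. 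Together with absorbing the finitely many scales $T<T_0(M)$ below the a.s.~Borel--Cantelli threshold into the implied constant (which may thus depend on the matrix), this completes the argument.
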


The measure implicit in this a.e.~statement is the $(d\times s)$--dimensional Lebesgue measure. Note that the convergence assumption~\eqref{series} holds when setting, for a given $\eta''>0$, $\Phi(T)=T^{-\alpha}$, where $\alpha=d(s+1)/(s-d)+\eta''$ and $s\ge d+1$. From~\eqref{defvisiudt}, one deduces that the visibility in the resulting forest is $\mathcal{V}\left(\epsilon\right)=O\left(\epsilon^{-\beta}\right)$, where $\beta=d(s+d^2)/(s-d)+d\eta''$. This exponent $\beta$ is, for $s$ large enough and upon choosing the parameter $\eta''$ suitably, less than $d+\eta'=n-1+\eta'$ for any predefined $\eta'>0$. It is then easy to deduce from~\eqref{bornvisifv} that the sequence $\bm{B}$ defined by~\eqref{seqB} and~\eqref{defai} is such that $\hat{\delta}_{\bm{B}}(N)\;=\; O\left(N^{-1/(n-1)+\eta}\right)$ for any predefined $\eta>0$ for a suitably chosen $\eta'$.\\ 

In view of the statement of Theorem~\ref{udtborelcantelli}, a number of open problems emerge. They also appear in~\cite[\S 8]{ASW}.

\begin{pb}\label{pbemptiness}
Let $d\ge 1$ and $s\ge d+1$ be integers. What is the best decay rate that the function $\Phi$ can attain for there to exist a set of $s$ Uniformly Diophantine vectors with type $\Phi$ in dimension $d$?
\end{pb}

Clearly, Problem~\ref{pbemptiness} has direct implications towards Problems~\ref{mainpbdenseforest} and~\ref{pbsudgen}. As far as the question of explicit construction posed in Problem~\ref{pbexplicitsud} is concerned, one may ask~:

\begin{pb}\label{pbeffecexplisud}
Assume that $d\ge 1$ and $s\ge d+1$. Let $\Phi$ be a decaying map tending to zero at infinity such that the set of $s$-tuples of $d$--dimensional Uniformly Diophantine vectors with type $\Phi$ is non--empty. Can one find an explicit element in this set?
\end{pb}

The concept of uniformly Diophantine sets of vectors can be seen as being related to that of bad approximability in the case that the rational approximations under consideration are not defined by linear forms but by multilinear forms (this is, at the end of the day, the main idea formalised  in~\cite[Proposition 7.1]{ASW} to provide a  sufficient condition for a set of vectors not to be of a given uniformly Diophantine type). In this respect, Problem~\ref{pbeffecexplisud} must be linked to the fact  that explicit examples of badly approximable vectors, constructed from sets of algebraic conjugates, are known in any dimension --- see~\cite[\S II.4]{SchmidtDioph}.

The last problem in this section is related to the construction, in dimension 2, of a \emph{uniformly discrete} dense forest appearing  in~\cite[\S 6.1]{ASW} (this construction will further be discussed  in \S\ref{secgrid}). As it relies on the existence of sets of uniformly Diophantine vectors on manifolds, its generalisation to higher dimensions is likely to require first the solution to this problem, interesting on its own~:

\begin{pb}\label{udtmanifolds}
Assume that $d\ge 1$ and $s\ge d+1$. Let $\Phi$ be a decaying map tending to zero at infinity. Consider the set of $s$--tuples of $d$--dimensional Uniformly Diophantine vectors  with type $\Phi$. Does it intersect nondegenerate analytic manifolds non--trivially? 
\end{pb}

\section{The Danzer Property and Visibility in Structured Discrete Point Sets}\label{seccandidate} Given a set with finite density, natural questions emerging from Problems~\ref{danzer} and~\ref{mainpbdenseforest} are, on the one hand to determine if it is a dense forest and, on the other, whether it can be a Danzer set. Answering such questions is usually possible only if the point set under consideration enjoys some structural property. In this section,  we indicate what is known and less--known with respect to Problems~\ref{danzer} and~\ref{mainpbdenseforest} when considering a number of structured point sets with finite density; namely~: grids, cut--and--project sets, point sets obtained from a tiling resulting from a primitive substitution system, and the set of holonomy vectors of saddle connections on translation surfaces.

\subsection{Grids}\label{secgrid} Recall that a grid is a translated lattice and that, as seen in \S\ref{statartdanzer}, Bambah and Woods showed in the proof of ~\cite[Theorem 2]{BW} that a union of such sets can satisfy the Danzer Property while having a density growing like $O\left(T^n\left(\log T\right)^{n-1}\right)$ in dimension $n\ge 2$. Such union must necessarily be infinite~: indeed, as established in~\cite[Theorem 1]{BW}, a finite union of grids (which has necessarily finite density) cannot be a Danzer set.

On the other hand, Peres' planar construction  (see~\eqref{peresfoerest} and~\eqref{f_1peres}) and its generalisation by Adiceam, Solomon and Weiss (see~\eqref{forestunion} and~\eqref{f_1peresgen})  show that a  union of lattices can be a dense forest in any given dimension $n\ge 2$. 

Under the additional requirement that such a dense forest must be uniformly discrete, the following is known. A union of two grids in $\R^n$ ($n\ge 2$), when determining a uniformly discrete set, cannot be a dense forest in a strong sense~: as proved in~\cite[Corollary 2.2]{ASW}, it admits a \emph{vacant strip} in the sense that there exists a neighbourhood of a  hyperplane with constant width not containing any point in this union. However, it is established in~\cite[Theorem 1.2]{ASW} that a union of \emph{three} grids can determine a uniformly discrete dense forest with visibility $O\left(\epsilon^{-5-\eta}\right)$ for any $\eta>0$ \emph{in the plane}. Such union $\mathfrak{B}$ can  for instance be defined as 
\begin{equation}\label{defunifdiscrgridforest}
\mathfrak{B}\;=\; \bm{\Lambda}_1\cup\left(\bm{x}+\bm{\Lambda}_2\right)\cup\left(\bm{\bm{y}+\Lambda}_3\right), 
\end{equation} 
where 
\begin{equation*}
\bm{\Lambda}_1=\Z^2, \qquad \bm{\Lambda}_2=\begin{pmatrix}\gamma & \alpha\\0& 1\end{pmatrix}\cdot \Z^2\quad\textrm{and}\qquad  \bm{\Lambda}_3=\begin{pmatrix}1 & 0\\ \beta& \delta\end{pmatrix}\cdot \Z^2
\end{equation*} 
with 
\begin{equation}\label{defcoefmatr}
\alpha=\sqrt{2}, \qquad \beta=3-\sqrt{2}+\sqrt{3}-\sqrt{6}, \qquad \gamma=\sqrt{3} \quad\textrm{and}\qquad \delta=-3+\sqrt{6}.
\end{equation}
Furthermore, $\bm{x}, \bm{y}\in\R^2$ can be chosen such that 
\begin{equation}\label{deftransl}
\bm{x}\not\in \overline{\bm{\Lambda}_2-\bm{\Lambda}_1}, \qquad \bm{y}\not\in \overline{\bm{\Lambda}_3-\bm{\Lambda}_1} \qquad \textrm{and}\qquad \bm{y}-\bm{x}\not\in \overline{\bm{\Lambda}_3-\bm{\Lambda}_2}
\end{equation}
(which holds for generic choices of $\bm{x}$, $\bm{y}\in\R^2$).

Two comments are in place. 

On the one hand, the values in~\eqref{defcoefmatr} are particular examples of parameters $\alpha, \beta, \gamma$ and $\delta$  satisfying certain general conditions of an algebraic and Diophantine nature explicitly stated in~\cite[\S 6.1]{ASW}. The following questions should be rekated to the comment made before the statement of Problem~\ref{udtmanifolds} ~:

\begin{pb}
Can the construction of the uniformly discrete dense forest defined in~\eqref{defunifdiscrgridforest} be generalised to higher dimensions?
\end{pb}

\begin{pb}\label{defvisiunifdicrforgrid}
Can the visibility bound $O\left(\epsilon^{-5-\eta}\right)$ for any $\eta>0$ be improved for the dense forest~\eqref{defunifdiscrgridforest}?
\end{pb}

We expect the answer to the question raised in Problem~\ref{defvisiunifdicrforgrid} to be almost certainly positive.

On the other hand, this construction is not explicit only because of the generic choice of the vectors $\bm{x}, \bm{y}\in\R^2$ in~\eqref{deftransl}.

\begin{pb}
To find explicit examples of vectors $\bm{x}, \bm{y}\in\R^2$ such that the relations in~\eqref{deftransl} hold.
\end{pb}

\subsection{Cut--and--Project Sets} We briefly recall first the definition of a cut--and--project set, referring the reader to~\cite{BG, BM00, Me94, Moody, Se95} for further details. These are  aperiodic but structured subsets of Euclidean space providing models for the mathematical theory of quasicystals.

Let $n\ge 2$. Decompose $\R^n$ as the direct sum of two subspaces, $S_{phys}$ (the \emph{physical space}) and $S_{int}$ (the \emph{internal space}). In particular, $n=\dim \left(S_{phys}\right)+\dim \left(S_{int}\right)$. Let $\pi_{phys}~: \R^n\rightarrow S_{phys}$ and $\pi_{int}~: \R^n\rightarrow S_{int}$ denote the projections corresponding to the direct sum decomposition. Fix a grid $\Lambda\subset\R^n$ obtained as the translate of a full rank lattice and let $\mathcal{W}\subset S_{int}$ be a bounded set (referred to as the \emph{window}). Then, $$\mathfrak{L}\left(\Lambda, \mathcal{W}\right)\;=\; \pi_{phys}\left(\Lambda\cap \pi_{int}^{-1}\left(\mathcal{W}\right)\right)$$ is the cut--and--project set   with dimensions $\left(\dim S_{phys}, n\right)$  associated  to the grid $\Lambda$, to the window $\mathcal{W}$ and to the direct sum decomposition $\R^n=S_{phys}\oplus  S_{int}$. See Figure~\ref{candp} for an illustration of a  cut--and--project set with dimensions $(1,2)$.

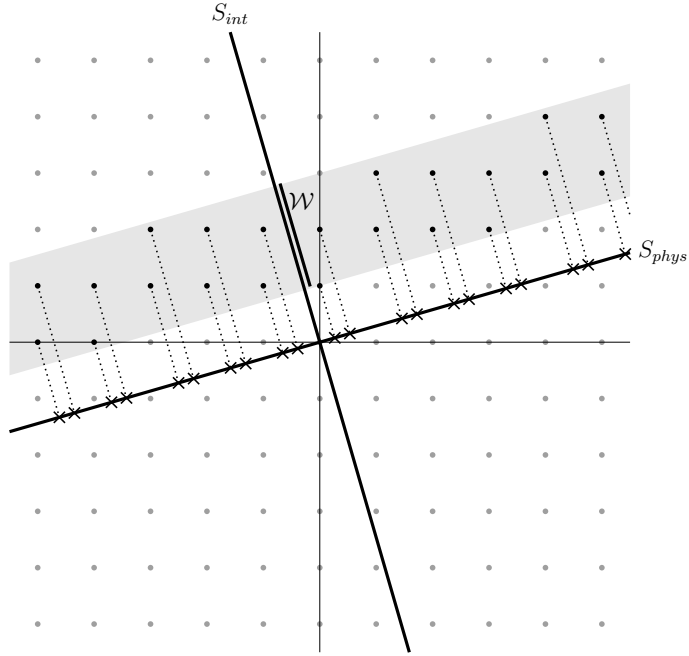
\begin{figure}[h!]
\begin{center}
\scalebox{0.75}{
  \begin{tikzpicture}    

	\fill[gray!20] (-5.5,-5.5*0.28867513459+1)--(5.5,5.5*0.28867513459+1)  -- (5.5, 5.5*0.28867513459+3) -- (-5.5, -5.5*0.28867513459+3)--cycle;  
  
	\fill[gray!75] (0,0) circle(0.05);
	\fill (0,1) circle(0.05); 
	\fill (0,2) circle(0.05); 
	\fill[gray!75] (0,3) circle(0.05); 
	\fill[gray!75] (0,4) circle(0.05); 
	\fill[gray!75] (0,5) circle(0.05); 
	\fill[gray!75] (0,-1) circle(0.05); 
	\fill[gray!75] (0,-2) circle(0.05); 
	\fill[gray!75] (0,-3) circle(0.05); 
	\fill[gray!75] (0,-4) circle(0.05); 
	\fill[gray!75] (0,-5) circle(0.05); 
	\fill[gray!75] (1,0) circle(0.05); 
	\fill[gray!75] (1,1) circle(0.05); 
	\fill[gray!75] (1,-1) circle(0.05); 
	\fill (1,2) circle(0.05); 
	\fill[gray!75] (1,-2) circle(0.05); 
	\fill (1,3) circle(0.05); 
	\fill[gray!75] (1,-3) circle(0.05); 
	\fill[gray!75] (1,4) circle(0.05); 
	\fill[gray!75] (1,-4) circle(0.05); 
	\fill[gray!75] (1,5) circle(0.05);  
	\fill[gray!75] (1,-5) circle(0.05); 
	\fill[gray!75] (2,0) circle(0.05); 
	\fill[gray!75] (2,1) circle(0.05); 
	\fill[gray!75] (2,-1) circle(0.05); 
	\fill (2,2) circle(0.05); 
	\fill[gray!75] (2,-2) circle(0.05); 
	\fill (2,3) circle(0.05); 
	\fill[gray!75] (2,-3) circle(0.05); 
	\fill[gray!75] (2,4) circle(0.05); 
	\fill[gray!75] (2,-4) circle(0.05); 
	\fill[gray!75] (2,5) circle(0.05);  
	\fill[gray!75] (2,-5) circle(0.05);         
	\fill[gray!75] (3,0) circle(0.05); 
	\fill[gray!75] (3,1) circle(0.05); 	
	\fill[gray!75] (3,-1) circle(0.05); 
	\fill (3,2) circle(0.05); 
	\fill[gray!75] (3,-2) circle(0.05); 
	\fill (3,3) circle(0.05); 
	\fill[gray!75] (3,-3) circle(0.05); 
	\fill[gray!75] (3,4) circle(0.05); 
	\fill[gray!75] (3,-4) circle(0.05); 
	\fill[gray!75] (3,5) circle(0.05);  
	\fill[gray!75] (3,-5) circle(0.05); 	 
	\fill[gray!75] (4,0) circle(0.05); 
	\fill[gray!75] (4,1) circle(0.05); 	
	\fill[gray!75] (4,-1) circle(0.05); 
	\fill[gray!75] (4,2) circle(0.05); 
	\fill[gray!75] (4,-2) circle(0.05); 
	\fill (4,3) circle(0.05); 
	\fill[gray!75] (4,-3) circle(0.05); 
	\fill (4,4) circle(0.05); 
	\fill[gray!75] (4,-4) circle(0.05); 
	\fill[gray!75] (4,5) circle(0.05);  
	\fill[gray!75] (4,-5) circle(0.05); 	
	\fill[gray!75] (5,0) circle(0.05); 
	\fill[gray!75] (5,1) circle(0.05); 	
	\fill[gray!75] (5,-1) circle(0.05); 
	\fill[gray!75] (5,2) circle(0.05); 
	\fill[gray!75] (5,-2) circle(0.05); 
	\fill (5,3) circle(0.05); 
	\fill[gray!75] (5,-3) circle(0.05); 
	\fill (5,4) circle(0.05); 
	\fill[gray!75] (5,-4) circle(0.05); 
	\fill[gray!75] (5,5) circle(0.05);  
	\fill[gray!75] (5,-5) circle(0.05); 	
	\fill (-5,0) circle(0.05); 
	\fill (-5,1) circle(0.05); 	
	\fill[gray!75] (-5,-1) circle(0.05); 
	\fill[gray!75] (-5,2) circle(0.05); 
	\fill[gray!75] (-5,-2) circle(0.05); 
	\fill[gray!75] (-5,3) circle(0.05); 
	\fill[gray!75] (-5,-3) circle(0.05); 
	\fill[gray!75] (-5,4) circle(0.05); 
	\fill[gray!75] (-5,-4) circle(0.05); 
	\fill[gray!75] (-5,5) circle(0.05);  
	\fill[gray!75] (-5,-5) circle(0.05); 	
	\fill (-4,0) circle(0.05); 
	\fill (-4,1) circle(0.05); 	
	\fill[gray!75] (-4,-1) circle(0.05); 
	\fill[gray!75] (-4,2) circle(0.05); 
	\fill[gray!75] (-4,-2) circle(0.05); 
	\fill[gray!75] (-4,3) circle(0.05); 
	\fill[gray!75] (-4,-3) circle(0.05); 
	\fill[gray!75] (-4,4) circle(0.05); 
	\fill[gray!75] (-4,-4) circle(0.05); 
	\fill[gray!75] (-4,5) circle(0.05);  
	\fill[gray!75] (-4,-5) circle(0.05); 	
	\fill[gray!75] (-3,0) circle(0.05); 
	\fill (-3,1) circle(0.05); 	
	\fill[gray!75] (-3,-1) circle(0.05); 
	\fill (-3,2) circle(0.05); 
	\fill[gray!75] (-3,-2) circle(0.05); 
	\fill[gray!75] (-3,3) circle(0.05); 
	\fill[gray!75] (-3,-3) circle(0.05); 
	\fill[gray!75] (-3,4) circle(0.05); 
	\fill[gray!75] (-3,-4) circle(0.05); 
	\fill[gray!75] (-3,5) circle(0.05);  
	\fill[gray!75] (-3,-5) circle(0.05); 		
	\fill[gray!75] (-2,0) circle(0.05); 
	\fill (-2,1) circle(0.05); 	
	\fill[gray!75] (-2,-1) circle(0.05); 
	\fill (-2,2) circle(0.05); 
	\fill[gray!75] (-2,-2) circle(0.05); 
	\fill[gray!75] (-2,3) circle(0.05); 
	\fill[gray!75] (-2,-3) circle(0.05); 
	\fill[gray!75] (-2,4) circle(0.05); 
	\fill[gray!75] (-2,-4) circle(0.05); 
	\fill[gray!75] (-2,5) circle(0.05);  
	\fill[gray!75] (-2,-5) circle(0.05); 	    
	\fill[gray!75] (-1,0) circle(0.05); 
	\fill (-1,1) circle(0.05); 	
	\fill[gray!75] (-1,-1) circle(0.05); 
	\fill (-1,2) circle(0.05); 
	\fill[gray!75] (-1,-2) circle(0.05); 
	\fill[gray!75] (-1,3) circle(0.05); 
	\fill[gray!75] (-1,-3) circle(0.05); 
	\fill[gray!75] (-1,4) circle(0.05); 
	\fill[gray!75] (-1,-4) circle(0.05); 
	\fill[gray!75] (-1,5) circle(0.05);  
	\fill[gray!75] (-1,-5) circle(0.05); 	
	
	\draw (0, -5.5)--(0, 5.5) ;
	\draw (-5.5,0)--(5.5,0) ;
	\draw[ultra thick] (-5.5, -5.5*0.28867513459)--(5.5, 5.5*0.28867513459);
	\draw[ultra thick] (-5.5*0.28867513459, 5.5)--(5.5*0.28867513459, -5.5);
	\draw [ultra thick] (-0.27+0.1, 0.960768923+0.1*0.28867513459)--(2.9*-0.277350098+0.1, 2.9*0.960768923+0.1*0.28867513459);

	\node [above] at (-5.5*0.28867513459, 5.5) {$S_{int}$};  	
	\node [right] at ( 5.5, 5.5*0.28867513459) {$S_{phys}$};  	
	\node [left] at (0.05,2.5) {$\mathcal{W}$};

	\draw[dotted, thick] (0,1)--(0.2665, 0.0769);
	\draw[dotted, thick] (0,2)--(0.5329, 0.1538);
	\draw[dotted, thick] (1,2)--(1.4560, 0.4203);
	\draw[dotted, thick] (1,3)--(1.7225, 0.4972);
	\draw[dotted, thick] (2,3)--(2.6456, 0.7637);
	\draw[dotted, thick] (2,2)--(2.3791, 0.6868);		
	\draw[dotted, thick] (3,2)--(3.3022, 0.9533);	
	\draw[dotted, thick] (3,3)--(3.5686, 1.0302);
	\draw[dotted, thick] (4,4)--(4.7582, 1.3736);
	\draw[dotted, thick] (4,3)--(4.4917, 1.2966);
	\draw[dotted, thick] (5,3)--(5.4148, 1.5631);	
	\draw[dotted, thick] (5,4)--(5.5, 2.25);								
	\draw[dotted, thick] (-1,1)--(-0.6566, -0.1895);
	\draw[dotted, thick] (-1,2)--(-0.3914, -0.1126);
	\draw[dotted, thick] (-2,1)--(-1.5797, -0.456);
	\draw[dotted, thick] (-2,2)--(-1.3132, -0.3791);		
	\draw[dotted, thick] (-3,1)--(-2.5028, -0.7225);
	\draw[dotted, thick] (-3,2)--(-2.2363, -0.6456);		
	\draw[dotted, thick] (-4,0)--(-3.6923, -1.0659);	
	\draw[dotted, thick] (-4,1)--(-3.4258, -0.9886);		
	\draw[dotted, thick] (-5,0)--(-4.6154, -1.3323);	
	\draw[dotted, thick] (-5,1)--(-4.3489, -1.2554);					

\draw plot[mark=x, mark options={color=black, scale=2,  thick}] coordinates {(0.2665, 0.0769) (0.5329, 0.1538) (1.4560, 0.4203) (1.7225, 0.4972) (2.6456, 0.7637) (2.3791, 0.6868)(3.3022, 0.9533)(3.5686, 1.0302)(4.7582, 1.3736)(4.4917, 1.2966)(5.4148, 1.5631) (-0.6566, -0.1895) (-0.3914, -0.1126) (-1.5797, -0.456) (-1.3132, -0.3791) (-2.5028, -0.7225) (-2.2363, -0.6456) (-3.6923, -1.0659) (-3.4258, -0.9886) (-4.6154, -1.3323) (-4.3489, -1.2554)};
	
  \end{tikzpicture}
  }
  \end{center}
  \caption{Representation of a cut--and--project set with dimensions (1,2) obtained from the lattice $\Z^2$, the physical space $S_{phys}: y=x/(2\sqrt{3})$ and the internal space $S_{int}=S_{phys}^{\perp}$. The window $\mathcal{W}$ is an interval of length 2 in the internal space and the cut--and--project set is the set of crosses in the physical space.} 
  \label{candp}
\end{figure}

Cut--and--project sets are uniformly discrete. Theorem 1.1 in~\cite{ASW} establishes that they can never be dense forests as they always admit a vacant strip (in the sense defined in~\S\ref{secgrid}). Furthermore, Theorem 1.2 in~\cite{SW} shows that a finite union of cut--and--project sets can never be a Danzer set. In dimension $n=2$ however, a finite union of such sets can form a uniformly discrete dense forest. This is proved in~\cite[Theorem 2.5]{ASW}, and we   describe the underlying construction in order to state natural questions emerging from it.\\

Set $\T^3=\R^3\backslash\Z^3$ and denote by $\pi~:\R^3\rightarrow\T^3$ the natural projection. A \emph{piecewise linear unavoidable section} $\mathcal{S}\subset\T^3$ is a disjoint union of finitely many projections, under $\pi$, of line segments in $\R^3$ which satisfies the following intersection property~: for every $\bm{x}\in\R^3$ and every two--dimensional rational subspace $R\subset\R^3$, it holds that $\mathcal{S}\cap\pi\!\left(R+\bm{x}\right) \neq\emptyset$. It is not hard to see that such sections exist (see~\cite[Theorem 2.4]{ASW} for explicit examples).

Fix  a  piecewise linear unavoidable section $\mathcal{S}\subset\T^3$ and $\bm{x}_0\in\T^3$. Consider a $2$--dimensional subspace $V\subset\R^3$ which is totally irrational in the sense that it does not contain any rational line. Assume furthermore that $V$ is transverse to $\mathcal{S}$ in the sense that it does not contain any of the line segments the projections of which define the set $\mathcal{S}$. Set 
\begin{equation}\label{candpudforest}
\mathfrak{C}\;=\;\left\{\bm{v}\in V\; :\; \bm{x}_0+\pi(\bm{v})\in\mathcal{S}\right\}.
\end{equation} 
Then, $\mathfrak{C}$ is a uniformly discrete dense forest in the subspace $V$ identified with $\R^2$. As a matter of fact, $\mathfrak{C}$ is a finite union of cut--and--project sets, all with dimensions $(2,3)$ and all sharing the same physical space $V$. See the proof of Theorem 2.5 in~\cite{ASW} for a justification of these claims. 

The following  problem also appears in~\cite[\S 8]{ASW}~:

\begin{pb}
Determine a visiblity bound for the planar uniformly discrete dense forest defined by the union of cut--and--project sets~\eqref{candpudforest}.
\end{pb}

As the previous construction only deals with the planar case, it is natural to seek for a generalisation to higher dimensions.

\begin{pb}
Can a finite union of cut--and--project sets be a (possibly uniformly discrete) dense forest in dimension $n\ge 3$?
\end{pb}

\subsection{Point Sets Emerging from a Tiling obtained by a Primitive Substitution System} A countable collection of $n$--dimensional polytopes $\left\{P_i\right\}_{i\ge 0}$ is said to tile the space $\R^n$ if their union covers $\R^n$ and if any two distinct elements in this collection only intersect at their boundary (if they intersect at all). The polytopes are then said to be the tiles and the tiling is said to be polygonal. Given a finite set of $n$--dimensional polytopes, say $P_1, \dots, P_s$, there is a natural way to generate a tiling of the entire space by ``dissection and inflation''. We briefly describe this process, referring the reader to~\cite{BG} for further details.

Assume the existence of a \emph{substitution}, namely a map $S$ which to a given polytope $P_i$ associates a tiling of $P_i$ by isometric copies of the scaled tiles $\alpha P_1, \dots, \alpha P_s$. Here, $\alpha>0$ is a parameter whose inverse is referred to as the \emph{inflation constant}. Clearly, the definition of $S$ can be extended so as to tile any finite union of isometric copies of the polytopes $P_i$'s by isometric copies of  the scaled polytopes $\alpha P_1, \dots, \alpha P_s$. By applying the mapping $S$ successively, and by inflating back each time the smaller tiles  to their initial size, one can thus  cover bigger and bigger regions of the Euclidean space $\R^n$. It can furthermore be shown that this process converges, in the limit, to a tiling of the whole space. Such a tiling is then called a \emph{polygonal substitution tiling}. It is \emph{primitive} if the \emph{substitution matrix} $M_S$ is primitive in the usual sense (i.e.~there exists $k\ge 1$ such that $M_S^k$ has strictly positive entries). Here, $M_S^k$ is the $s\times s$ matrix whose $(i,j)$ entry is the number of times an isometric copy of $\alpha P_i$ is used to tile $S(P_j)$. Primitivity is a natural concept in this context as it renders the idea that the substition is ``irreducible'' in the sense that it does not emerge from a smaller one.  See Figure~\ref{substilpol} for an illustration. \\

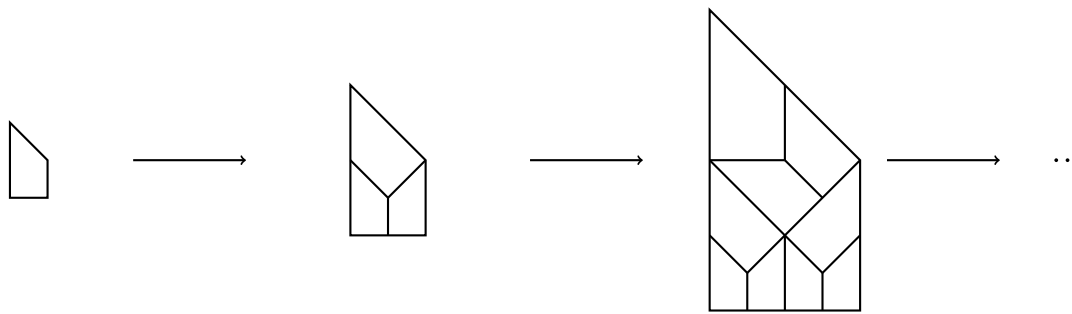
\begin{figure}[h!]
\begin{center}
\scalebox{0.5}{

\hspace{-20mm}
\begin{subfigure}{.45\textwidth}
  \centering
  
  \begin{tikzpicture}    
  
  	\draw[ultra thick] (-0.5,-0.5)-- (0.5,-0.5)-- (0.5,0.5) -- (-0.5,1.5)--cycle;
  		
  \end{tikzpicture}
\end{subfigure}
\hspace{-30mm}
\begin{subfigure}{.45\textwidth}
  \centering

  \begin{tikzpicture}    
  
  	\draw[->, ultra thick] (0,0)--(3,0);	
  	  	  	
  \end{tikzpicture}
\end{subfigure}
\hspace{-20mm}
\begin{subfigure}{.45\textwidth}
  \centering

  \begin{tikzpicture}    
  
  	\draw[ultra thick] (2*-0.5,2*-0.5)-- (2*0.5,2*-0.5)-- (2*0.5,2*0.5) -- (2*-0.5,2*1.5)--cycle;  		
  	\draw[ultra thick] (0,0)--(2*-0.5, 2*0.5);
  	\draw[ultra thick] (0,0)--(2*0.5, 2*0.5);
  	\draw[ultra thick] (0,0)--(0, 2*-0.5);  	  	
  	  	  	
  \end{tikzpicture}
\end{subfigure}
\hspace{-20mm}
\begin{subfigure}{.45\textwidth}
  \centering

  \begin{tikzpicture}    
  
  	\draw[->, ultra thick] (0,0)--(3,0);		
  	  	  	
  \end{tikzpicture}
\end{subfigure}
\hspace{-20mm}
\begin{subfigure}{.45\textwidth}
  \centering
  \begin{tikzpicture}    
    	\draw[ultra thick] (4*-0.5,4*-0.5)-- (4*0.5,4*-0.5)-- (4*0.5,4*0.5) -- (4*-0.5,4*1.5)--cycle;  		
  	\draw[ultra thick] (0,0)--(4*-0.5, 4*0.5);
  	\draw[ultra thick] (0,0)--(4*0.5, 4*0.5);
  	\draw[ultra thick] (0,0)--(0, 4*-0.5);  	  
  	\draw[ultra thick] (0,4*0.5)--(4*-0.5, 4*0.5);  	   		  	
  	\draw[ultra thick] (0,4*0.5)--(0, 8*0.5);  	   	
  	\draw[ultra thick] (0, 4*0.5)--(2*0.5, 1);  	
  	\draw[ultra thick] (1, -4*0.5)--(1, -2*0.5);  	   
  	\draw[ultra thick] (-1, -4*0.5)--(-1, -2*0.5);  	   
  	\draw[ultra thick] (-1, -2*0.5)--(0,0);    	  	  	  
  	\draw[ultra thick] (1, -2*0.5)--(0,0); 	  	
  	\draw[ultra thick] (1, -2*0.5)--(2,0); 	  
  	\draw[ultra thick] (-1, -2*0.5)--(-2,0); 	    	
  \end{tikzpicture}
\end{subfigure}
\hspace{-20mm}
\begin{subfigure}{.45\textwidth}
  \centering

  \begin{tikzpicture}    
  
  	\draw[->, ultra thick] (0,0)--(3,0);		
	\fill (4.5,0) circle(0.05);
	\fill (4.8,0) circle(0.05);
	\fill (5.1,0) circle(0.05);		
  \end{tikzpicture}
\end{subfigure}

}
\end{center}
  \caption{Illustration of a substitution tiling obtained from two different polygonal tiles.} 
  \label{substilpol}
\end{figure}

Uniformly discrete sets can be naturally obtained from a substitution tiling in at least two different ways~: either by choosing a point always in the same position in each tile, or else by considering the set of vertices of this tiling. Denote by $\Sigma\subset\R^n$ a point set obtained by either of these processes. Theorem 1.1 in~\cite{SW} shows that $\Sigma$ is never a Danzer set, and the same proof establishes that it is not a dense forest either. As pointed out to the author by Yaar Solomon, these results can also be  generalised to the case of so--called \emph{multiscale substitution tilings} recently introduced by Smilanski and Solomon~\cite{SmS}.

The following two problems appear in~\cite[\S 7.1]{SW}. 

\begin{pb}
Consider a finite number of primitive substitution tilings in $\R^n$ and the  union of the resulting sets $\Sigma$ obtained as above. Can this union be a Danzer set or a dense forest?
\end{pb}

The  definition of a substitution tiling can be generalised to the case that the tiles are not necessarily polygonal but fractal (see~\cite{BSol} for further details). In this case however, the proof of Theorem 1.1 in~\cite{SW} does not apply anymore.

\begin{pb}
Consider a primitive substitution tiling obtained from a fractal tiling. Define a point set by choosing a point always in the same position in each tile. Can this point set be a Danzer set or a dense forest?
\end{pb}

\subsection{Holonomy Vectors of Saddle Connections on Translation Surfaces} Informally, a \emph{translation surface} is the topological space obtained after identifying opposite parallel sides of a planar polygon. More formally, it is a Riemann surface equipped with a holomorphic 1--form that can be realized this way~: let $P_1, \dots, P_s$ be a collection of    polygons in $\R^2$. Assume that for every edge $\bm{E}_i$ of one of these polygons, there exists a different edge $\bm{E}_i$ of the polygons such that $\bm{E}_j = \bm{E}_i + \bm{v}_{ji}$ for some non--zero vector $\bm{v}_{ji}$ (in particular, that $\bm{v}_{ij}=-\bm{v}_{ji}$). The translation surface resulting from this configuration is the closed quotient space obtained by identifying every edge $\bm{E}_i$ with its associated edge $\bm{E}_j$ through the translation $\bm{x}\in\R^2\mapsto \bm{x}+\bm{v}_{ij}\in\R^2$. See Figure~\ref{transsfce} for an illustration and~\cite{HS, M, W} and the references within for more details on this concept.

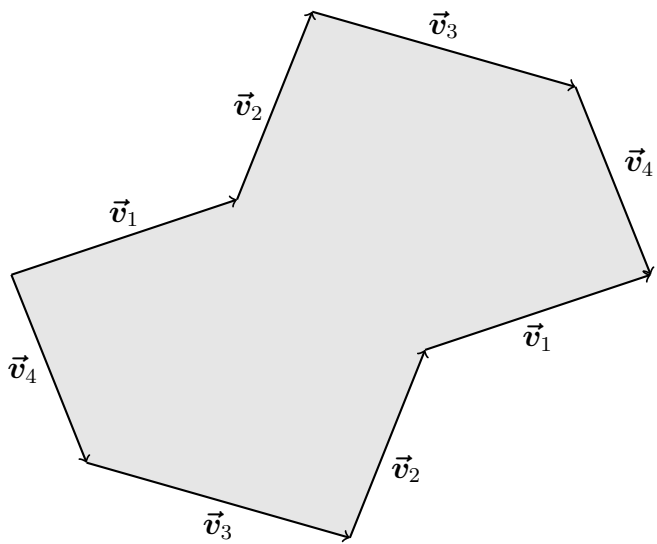
\begin{figure}[h!]
\begin{center}
\scalebox{1}{
  \begin{tikzpicture}    
  
	\fill[gray!20] (-1,1)--(0,3.5)  -- (3.5, 2.5) -- (4.5, 0) -- (1.5,-1)-- (0.5,-3.5) -- (-3,-2.5)-- (-4,0)--cycle;
        
    \draw[->, thick] (-1,1)--(0,3.5) ;
    \draw[->, thick] (0.5,-3.5)--(1.5,-1) ;
    \draw[->, thick] (-4,0)--(-1,1) ;
    \draw[->, thick] (1.5,-1)--(4.5, 0) ;    
    \draw[->, thick] (0,3.5)--(3.5, 2.5) ;    
    \draw[->, thick] (-4,0)--(-3, -2.5) ;               
    \draw[->, thick] (-3,-2.5)--(0.5, -3.5) ;         
    \draw[->, thick] (3.5,2.5)--(4.5, 0) ;         
    
    \node [above] at (-2.5, 0.5) {$\bm{\vec{v}}_1$};
    \node [below] at (3, -0.5) {$\bm{\vec{v}}_1$};
    \node [above, left] at ( -0.5, 2.25) {$\bm{\vec{v}}_2$};    
    \node [below] at ( 1.25, -2.25) {$\bm{\vec{v}}_2$};    
    \node [above] at ( 1.75, 3) {$\bm{\vec{v}}_3$};   
    \node [right] at ( 4, 1.5) {$\bm{\vec{v}}_4$};     
    \node [below] at ( -1.25, -3) {$\bm{\vec{v}}_3$};         
    \node [left] at ( -3.5, -1.25) {$\bm{\vec{v}}_4$};

  \end{tikzpicture}
  }
  \end{center}
  \caption{Representation of a translation surface obtained by identifying by translation the opposite parallel edges of a polygon.} 
  \label{transsfce}
\end{figure}

The metric induced on this quotient space is  flat outside the singularities, where a \emph{singularity} is defined as the image of a vertex around which  the sum of the angles of the polygons mapping to it is not $2\pi$. (In Figure~\ref{transsfce} for instance, there is a single singularity with angle $6\pi$.) A \emph{saddle connection} is then defined as a geodesic between two (not necessarily distinct) singularities which does not cross any other singularity. The corresponding \emph{holonomy vector} is obtained by ``devlopping'' in the natural way the saddle connection in the plane thanks to the local coordinates~: it is therefore a vector with the same length and direction as the saddle connection. For an equivalent definition relying on  the Riemann surface structure associated to a holomorphic 1--form, see, e.g., \cite[\S 1]{CR}. \\

A fundamental result due to Masur~\cite{HM} states that the set of holonomy vectors of a translation surface  has always finite density. It nevertheless need not be \emph{relatively dense}, which is a necessary condition for a point set to be a dense forest or a Danzer set\footnote{A set is relatively dense if there exists $R>0$ such that any ball with radius $R$ intersects it non--trivially.}. See~\cite{Wu} for counterexamples justifying this claim. The following problem is attributed to Barak Weiss. 

\begin{pb}[Barak Weiss]
Determine necessary and sufficient conditions for the set of holonomy vectors of a translation surface to be relatively dense and/or uniformly discrete.
\end{pb}

To be precise, Weiss' question is about determining  conditions under which the set under consideration is Delone; that is, \emph{both} relatively dense and uniformly discrete. Several partial results are provided in~\cite{Wu}. Since a set of holonomy vectors enjoys some nice distribution properties at least in a probabilistic sense (see~\cite{CR} for a precise statement), it is natural to related it to the Danzer problem and to the construction of dense forests.

The following statement and its (sketched) proof were communicated to the author by Barak Weiss~:

\begin{thm}
A  set of holonomy vectors can \emph{never} be a Danzer set.
\end{thm}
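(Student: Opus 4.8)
The plan is to verify the criterion of Problem~\ref{equivdanzerbis}: I will show that for \emph{every} translation surface $\omega$ the empty set lies in the Chabauty--Fell closure of $G_2\cdot V(\omega)$, where $V(\omega)\subset\R^2$ denotes the set of holonomy vectors of saddle connections of $\omega$. Two elementary observations drive the reduction. First, $V$ is equivariant for the natural $SL_2(\R)$--action on the moduli space of translation surfaces, i.e.\ $V(g\omega)=g\cdot V(\omega)$, and $SL_2(\R)$ embeds in $G_2$. Second, $\omega\mapsto V(\omega)$ is continuous into $(\mathcal{X}_2,d)$: for all but countably many $R>0$ the saddle connections of $\omega$ of length at most $R$ persist, with nearby holonomies and with no new ones appearing, under small deformations of $\omega$ in its stratum. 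Combining these, $\overline{G_2\cdot V(\omega)}$ contains $V(\omega^\ast)$ for every $\omega^\ast$ in the $SL_2(\R)$--orbit closure of $\omega$; and since this closure is $G_2$--invariant, it suffices to exhibit \emph{one} such $\omega^\ast$ for which $V(\omega^\ast)$ admits a \emph{vacant strip}, that is, a positive--width neighbourhood of an affine line meeting $V(\omega^\ast)$ only in the empty set. Indeed, for a set $F$ with such a strip one may translate the strip onto a neighbourhood of a line through the origin and then apply diagonal elements of $SL_2(\R)$ that contract along that line (hence expand transversally), swelling the still vacant strip past any ball $B_2(\bm 0,R)$; the corresponding images of $F$ then tend to $\emptyset$.

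The surface $\omega^\ast$ will be chosen \emph{completely periodic in the horizontal direction}, so that its horizontal foliation decomposes it into finitely many horizontal cylinders $C_1,\dots,C_k$ of positive heights $h_1,\dots,h_k$. The existence of such $\omega^\ast$ inside the orbit closure I would extract from the structure theory of that closure --- concretely, from the existence and description of minimal sets for the horocycle flow on moduli space (Smillie and Weiss). When $\omega$ itself already possesses a completely periodic direction --- as is automatic for lattice surfaces, and in genus two --- one simply takes $\omega^\ast=\omega$ and bypasses this input.

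It remains to produce the vacant strip for a completely periodic $\omega^\ast$ as above. Set $h=\min_i h_i>0$ and suppose, for contradiction, that $\omega^\ast$ has a saddle connection $\gamma$ with holonomy vector $(x,y)$ satisfying $0<|y|<h$. Develop $\gamma$ to the straight segment from $\bm 0$ to $(x,y)$ in $\R^2$ and parametrise it by its second coordinate, which is strictly monotone. The singular set of the horizontal foliation of $\omega^\ast$ is exactly the union of the cylinder boundaries, so between consecutive crossings of such a boundary the segment runs inside a single cylinder $C_i$; exiting $C_i$ through its far horizontal side would require a change of the second coordinate of at least $h_i\ge h>|y|$, which is impossible, so $\gamma$ never fully traverses a cylinder. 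Issuing from a singularity on some $\partial C_i$, the segment is thus trapped in $C_i$, yet it must end at a second singularity; but all singularities lie on the cylinder boundaries, at the second--coordinate levels $0$ and $h_i$, whereas the endpoint sits at level $y\in(0,h)$ --- a contradiction. Hence $V(\omega^\ast)$ has no point with second coordinate in $(-h,0)\cup(0,h)$, so $\R\times(h/3,2h/3)$ is a vacant strip, and by the first paragraph $V(\omega)$ is not a Danzer set.

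The step I expect to be the main obstacle is the production, inside the $SL_2(\R)$--orbit closure of an \emph{arbitrary} translation surface, of a surface with a completely periodic direction: while immediate when such a direction is already visible on $\omega$, in general it rests on genuinely non--trivial homogeneous and Teichm\"uller dynamics. By contrast, the Chabauty--Fell continuity of $\omega\mapsto V(\omega)$, the "vacant strip $\Rightarrow$ not Danzer" implication, and the cylinder argument of the third paragraph are all routine.
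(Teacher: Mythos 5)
Your proposal follows essentially the same route as the paper's (sketched) proof: $SL_2(\R)$--equivariance of the holonomy map, the Smillie--Weiss theorem supplying a horizontally completely periodic surface in the orbit closure, Chabauty--Fell continuity of $\mathcal{M}\mapsto hol(\mathcal{M})$, and the vacant strip of width $\min_i h_i$ about the horizontal, converted into arbitrarily large vacant convex sets by area--preserving maps. The only differences are cosmetic: you phrase the conclusion via Problem~\ref{equivdanzerbis} rather than by pulling back large convex bodies under $g_n^{-1}$, and you spell out the cylinder--crossing argument that the paper leaves as ``easy to see''.
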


\begin{proof}[Sketch of the proof]
A set of holonomy vectors of a surface has an equivariance property with respect to the action of the group $SL(2,\R)$ on the space of surfaces and on the plane by linear transformation. Namely, if $\mathcal{M}$ is a translation surface, $g$ is an element of $SL(2,\R)$ and $hol(\mathcal{M})$ is the set of holonomy vectors of $\mathcal{M}$, then $g\cdot hol(\mathcal{M}) = hol(g\mathcal{M})$. A surface is called \emph{horizontally completely periodic} if it is the union of horizontal cylinders; that is there are finitely many axis parallel rectangles which, when glued to each other give the surface, and such that each of the vertical sides of a rectangle are identified with each other, so that each rectangle closes up to become a cylinder which is horizontally periodic. It is easy to see that the holonomy set of a horizontally completely periodic surface has an infinite strip missing the holonomy set. Namely if $h>0$ is the minimal height of a horizontal cylinders, then there will be no holonomy vectors in the infinite strip $\{(x,y) : 0<y<h\}$. In particular, a horizontally completely periodic surface has holonomy set which is neither a Danzer set, nor a dense forest.

In~\cite{SmWe}, it is established that the orbit-closure under $SL(2,\R)$ of any surface contains a horizontally completely periodic surface (in fact, this is established in a stronger form, namely when considering the orbit closure under the upper triangular unipotent matrices). 

The space of surfaces can be equipped with a topology such that when a sequence $\left(\mathcal{M}_n\right)_{n\ge 1}$ converges to $\mathcal{M}$ in the space of surfaces, this implies that the sequence $\left(hol(\mathcal{M}_n)\right)_{n\ge 1}$ converges to $hol(\mathcal{M})$ in the Chabauty--Fell topology (defined in~\eqref{fell}). 

From all of this the theorem can be inferred as follows~: Let $\mathcal{M}_0$ be any translation surface and let $g_n \in SL(2,R)$ be such that the sequence $\left(\mathcal{M}_n\right)_{n\ge 1} = \left(g_n\cdot\mathcal{M}_0\right)_{n\ge 1}$ converges to  a horizontally completely periodic surface $\mathcal{M}$ in the topology on the collection of translation surfaces. Then, the sequence $\left(hol(\mathcal{M}_n)\right)_{n\ge 1}$ converges to $hol(\mathcal{M})$ in the Chabauty--Fell topology. Since $hol(\mathcal{M})$ misses an infinite strip and $hol(\mathcal{M}_n)$ tends to $hol(\mathcal{M})$, for any $T\ge 1$, one can find $n\ge 1$ so that $hol(\mathcal{M}_n)$ misses a convex set of area $> T$. Pulling back by $g_n^{-1}$ shows that the same holds for $\mathcal{M}_0$. 
\end{proof}

This leaves open the following question~:
\begin{pb}
Assume that the set of holonomy vectors of a translation surface is relatively dense. Can it be a dense forest?
\end{pb}


\begin{thebibliography}{99}

\bibitem{Adiceam} F. Adiceam, \emph{How far can you see in a forest?},
  Int. Math. Res. Not. {\bf 16}, 4867--4881 (2016). 

\bibitem{Adiceambis} F. Adiceam, \emph{Rational approximation and arithmetic progressions},
  Int. J. Number Theory 11 (2015), no. 2, 451--486. 
    
\bibitem{ASW} F. Adiceam, Y. Solomon, B. Weiss, \emph{Cut--and--Project Quasicrystals, Lattices and Dense Forests},
  preprint. 


\bibitem{Alon} N. Alon, \emph{Uniformly discrete forests with poor
    visibility}, Combin. Probab. Comput. 27 {\bf 4}, 442–-448 (2018).  

\bibitem{AS} N. Alon, J. H. Spencer, \underline{The probabilistic
    method} (third edition), John Wiley and Sons (2008). 

\bibitem{BC} J. Beck, W. Chen, \underline{Irregularities of Distribution}, Cambridge University Press (1987).   

\bibitem{BG} M. Baake, U. Grimm, \underline{Aperiodic order. Vol 1: a
    mathematical invitation}, Cambridge University Press (2013).   

\bibitem{BM00} M. Baake, R.V. Moody (eds.), \underline{Directions in Mathematical Quasicrystals}, CRM Monnograph Series, Publ. of the Amer. Math. Soc. (2000).

\bibitem{ball} K. Ball, \underline{An elementary introduction to modern convex geometry}, \emph{Flavors of geometry}, 1--58, Math. Sci. Res. Inst. Publ., 31, Cambridge Univ. Press, Cambridge, 1997. 

\bibitem{BW}  R. P. Bambah, A. C. Woods, {\em On a problem of
    Danzer}, Pacific J. Math. 37 {\bf 2}, 295--301 (1971). 

\bibitem{BBLT} G. Barat, V. Berth\'e, P. Liardet, J. Thuswaldner, {\em Dynamical directions in numeration. 
Num\'eration, pavages, substitutions}, Ann. Inst. Fourier (Grenoble) 56 (2006), no. 7, 1987--2092. 

\bibitem{branetal} J. Beck, {\em Lattice point problems: crossroads of number theory, probability theory and Fourier analysis}, in \underline{Fourier analysis and convexity}, pp.~1–35, Appl. Numer. Harmon. Anal., Birkhauser Boston, Boston, MA, 2004. 

\bibitem{BM} V. Bergelson, J. Moreira, {\em Van der Corput's difference theorem: some modern developments.}, Indag. Math. (N.S.) 27 (2016), no. 2, 437--479.

\bibitem{Bishop} C. Bishop, \emph{A set containing rectifiable arcs
    QC-locally but not QC-globally}, Pure Appl. Math. Q. {\bf 7} (1),
  121--138 (2011). 

\bibitem{BS} M. Boshernitzan, Y. Solomon, \emph{On Visibility Problems with an Infinite Discrete Set of Obstacles}, to appear in  Discrete and Computational Geometry. 

\bibitem{BradC} P.G. Bradford, V. Capoyleas, \emph{Weak $\epsilon$--nets for points on a hypersphere. }, Discrete Comput. Geom. 18, no. 1, 83--91 (1997) . 



\bibitem{Casselsbis} J.W.S. Cassels, \underline{An introduction to
    Diophantine approximation}, Cambridge Tracts in Mathematics and
  Mathematical Physics. No.~45. Cambridge~: At the University Press,
  x, 166 (1957). 


\bibitem{CR} J. Chaika, D. Robertson, \emph{Uniform distribution of saddle connection lengths} (with an appendix by Daniel El-Baz and Bingrong Huang),  J. Mod. Dyn. 15 (2019), 329--343. 

\bibitem{CFG} H. T. Croft, K. J. Falconer, R. K. Guy,  \underline{Unsolved problems in geometry}, Springer (1991). 

\bibitem{DMF} H. Daboussi, M. Mend\`es France, \emph{Spectrum, almost--periodicity and equidistribution modulo 1}, Studia Sci. Math. Hungar. 9 (1974), 173--180 (1975).  

\bibitem{Da65} L. Danzer, in \emph{Proceedings of the Colloquium on Convexity, Copenhagen, 1965}. Kobenhavns Universitets Matematiske Institut, Copenhagen 1967. 

\bibitem{DT} M. Drmota, R. Tichy,  \underline{Sequences, discrepancies and applications.}, Lecture Notes in Mathematics, 1651. Springer--Verlag, Berlin, 1997.


\bibitem{FJK} H., Fiedler, W. Jurkat, O. K\"orner, \emph{Asymptotic expansions of finite theta series}, Acta Arith. 32 (1977), no. 2, 129--146. 

\bibitem{Gowers} T. Gowers, \emph{Rough structures and classification}, in \underline{Visions in mathematics}, N. Alon, J. Bourgain, A. Connes, M. Gromov, V. Milman (eds.), Birkh\"auser (2000), 79--117. 




\bibitem{HW} D. Haussler, David, E. Welzl, \emph{$\epsilon$-nets and simplex range queries.}, Discrete Comput. Geom. 2 (1987), no. 2, 127--151. 
				

\bibitem{HS} P. Hubert, T. Schmidt, \emph{An introduction to Veech surfaces}, in \underline{Handbook of dynamical systems}, Vol. 1B, 501–526, Elsevier B. V., Amsterdam, 2006. 



\bibitem{KN} L. Kuipers, H. Niedrreiter,  \underline{Uniform distribution of sequences.}, Pure and Applied Mathematics. Wiley--Interscience [John Wiley \& Sons], New York--London--Sydney, 1974.


\bibitem{M} H. Masur, \emph{Ergodic theory of translation surfaces}, in \underline{Handbook of dynamical systems}, Vol. 1B, 527–547, Elsevier B. V., Amsterdam, 2006. 

\bibitem{HM} H. Masur, \emph{The growth rate of trajectories of a quadratic differential.},  Ergodic Theory Dynam. Systems 10 (1990), no. 1, 151--176. 


\bibitem{Me94} Y. Meyer, \emph{Quasicrystals, Diophantine approximation, and algebraic numbers}, in \underline{Beyond quasicrystals}, Chap.1, Les Houches, 1994 (F. Axel and D. Gratias, eds.), Springer, 3-16.



\bibitem{Moody} R.V. Moody, \emph{Model sets: A survey}, in \underline{From quasicrystals to more complex} \underline{systems} chap. 6, (Les Houches 1998) (F. Axel and F. D\'{e}noyer, J. P. Gazeau eds.), Springer, 145-166.

\bibitem{PT} J. Pach, G. Tardos, \emph{Piercing quasi-rectangles --- on a problem of Danzer and Rogers.}, J. Combin. Theory Ser. A 119 (2012), no. 7, 1391--1397. 

\bibitem{Polya} G. P\'{o}lya, \emph{Zahlentheoretisches und wahrscheinlichkeitstheoretisches \"{u}ber die Sichtweite im Walde}, Arch. Math. Phys. Ser. 2, 27, 135-142, (1918).



\bibitem{biocon} S. Roberts,  \underline{Genius At Play: The Curious Mind of John Horton Conway.} Bloomsbury USA, 2015. 

\bibitem{RT} G. Rote,  R. Tichy, \emph{Quasi-Monte Carlo methods and the dispersion of point sequences. Monte Carlo and quasi-Monte Carlo methods.}, Math. Comput. Modelling 23 (1996), no. 8--9, 9--23.

\bibitem{SchmidtDioph} W.M. Schmidt,  \underline{Diophantine
    approximation.} Lecture Notes in Mathematics, 785. Springer,
  Berlin (1980). 


\bibitem{Se95} M. Senechal, \underline{Quasicrystals and Geometry},
Cambridge Univ. Press (1995). 

\bibitem{SS} D. Simmons, Y. Solomon, \emph{A Danzer set for axis parallel boxes},  Proc. Amer. Math. Soc. 144 (2016), {\bf 6}, 2725--2729.

\bibitem{SmS} Y. Smilanski, Y. Solomon, \emph{Multiscale Substitution Tilings},  preprint.

\bibitem{SmWe} J. Smillie,  B. Weiss, \emph{Minimal sets for flows on moduli space.},  Israel J. Math. 142 (2004), 249--260.

\bibitem{SSW} O. Solan, Y. Solomon, B. Weiss, \emph{On problems of Danzer and Gowers and dynamics on the space of closed subsets of $\R^d$},  Int. Math. Res. Not. IMRN 2017, {\bf 21}, 6584--6598.     
        
\bibitem{SW} Y. Solomon, B. Weiss, \emph{Dense forests and Danzer sets},  Annal. Sci. de l'Ecole Norm. Super. {\bf 49} 1053--1074 (2016). 

\bibitem{BSol} B. Solomyak, \emph{Dynamics of self--similar tilings},  Ergodic Theory Dynam.~Systems 17 (1997), no.~3, 695--738. 

\bibitem{T} I. Tsokanos, PhD Thesis in preparation at the University of Manchester. 

\bibitem{W} A. Wright, \emph{Translation surfaces and their orbit closures: an introduction for a broad audience,}   EMS Surv. Math. Sci. 2 (2015), no. 1, 63--108. 

\bibitem{Wu} C. Wu, \emph{Delon\'e property of the holonomy vectors of translation surfaces,} Israel J. Math. 214 (2016), no. 2, 733--740. 
    
\end{thebibliography}
\end{document}